\documentclass[]{interact}
\usepackage{geometry}
\usepackage{epstopdf}% To incorporate .eps illustrations using PDFLaTeX, etc.
\usepackage[caption=false]{subfig}% Support for small, `sub' figures and tables
\usepackage[T1]{fontenc}
\usepackage{bm}
\usepackage{MnSymbol}
\usepackage{amssymb}
\usepackage{soul}
\usepackage{setspace}
\usepackage{tikz-cd}
\usepackage{tikz}
\usepackage{hyperref}
\usepackage{amsmath}
\usepackage[backend=biber,backref=true]{biblatex}
\usepackage{comment}
\makeatletter% @ becomes a letter
\def\NAT@def@citea{\def\@citea{\NAT@separator}}% Suppress spaces between citations using natbib.sty
\makeatother% @ becomes a symbol again

\theoremstyle{plain}% Theorem-like structures provided by amsthm.sty
\newtheorem{theorem}{Theorem}[section]
\newtheorem{lemma}[theorem]{Lemma}

\newtheorem{proposition}[theorem]{Proposition}

\theoremstyle{definition}
\newtheorem{definition}[theorem]{Definition}

\theoremstyle{remark}
\newtheorem{remark}{Remark}

\begin{document}

%\articletype{Toric ALE Gravitational instantons}% Specify the article type or omit as appropriate

\title{\vspace{-0.5 cm}Gauge theory and symplectic structures}
\author{
\name{Partha Ghosh}
%\affil{\textsuperscript{a}Taylor \& Francis, 4 Park Square, Milton Park, Abingdon, UK; \textsuperscript{b}Institut f\"{u}r Informatik, Albert-Ludwigs-Universit\"{a}t, Freiburg, Germany}
}
\newcommand{\Addresses}{{% additional braces for segregating \footnotesize
  \bigskip
  \footnotesize

  \textsc{Sorbonne Université, Université Paris Cité, CNRS, IMJ-PRG, F-75005 Paris, France}\par\nopagebreak
  \textit{E-mail address}: \texttt{Partha.Ghosh@imj-prg.fr}
}}
\maketitle
\begin{abstract} Motivated by the problem of finding obstructions to the existence of symplectic structures in dimensions higher than four, we introduce an elliptic system of equations on almost Hermitian manifolds that reduces to the Seiberg--Witten equations with Taubes' perturbation in dimension four. To define the equations, one needs to choose a spin$^{\mathbb C}$ structure on the manifold. We prove that the system has index zero for the canonical spin$^{\mathbb C}$ structure. Moreover, on almost Kähler (symplectic) manifolds, we construct a canonical solution and prove its transversality with large values of the perturbation parameter, while in dimension six we obtain uniqueness of the canonical solution under a certain assumption. We also show that on Kähler manifolds the equations reduce, under a natural ansatz, to the vortex equations. These results provide evidence toward a possible higher dimensional gauge theory capable of producing invariants of almost complex structures and ultimately, obstructions to compatible symplectic forms.

\end{abstract}
\tableofcontents
\section{Introduction}
\subsection{Background}
Let $M$ be a smooth orientable manifold of even dimension $n=2m$. A fundamental existence problem in symplectic topology asks the following:
\begin{center}
    \textit{under which conditions a given cohomology class $a\in H^2(M,\mathbb{R})$ can be represented by a symplectic form ?}
\end{center}
The obvious necessary conditions are the existence of an almost complex structure on $M$ and in the case of a closed manifold, the condition $a^m\neq 0$. In the open case a theorem of Gromov \cite{Gromov1,Gromov2} asserts that the existence of an almost complex structure is also sufficient. Therefore, the natural improvement of the existence question is the following:
\begin{center}
    \textit{given a cohomology class $a\in H^2(M,\mathbb{R})$ with $a^m\neq 0$ on an almost complex closed manifold $(M,J)$, can it be represented by a symplectic form ?}
\end{center}
The only known counter examples exist in dimension four, based on Seiberg--Witten theory (see Taubes \cite{Taubes1}). One explicit example is 
\begin{align*}
    M=\mathbb{CP}^2\# \mathbb{CP}^2\#\mathbb{CP}^2 
\end{align*}
$M$ has a real cohomology class $a$ satisfying $a^2\neq 0$ and an almost complex structure but $M$ does not admit any symplectic form. In higher dimensions the existence problem is completely open.\par
In this article, we introduce an elliptic system of equations on an almost complex manifold $(M,J)$ of arbitrary even dimension $n=2m$. When $n=4$, these equations reduce to the classical Seiberg--Witten equations. The equations arise from the following fantasy: given a $\mathrm{spin}^{\mathbb{C}}$ structure on $M$, we hope to ``count'' solutions of these equations and thereby obtain an invariant of the isotopy class of the almost complex structure $J$. If the manifold admits a symplectic structure compatible with $J$, we expect that this invariant equals $1$ for the canonical $\mathrm{spin}^{\mathbb{C}}$ structure induced by $J$. Under these assumptions, if one could find a manifold $M$ for which the invariant associated with the canonical $\mathrm{spin}^{\mathbb{C}}$ structure is not equal to $1$, then $M$ would not admit a symplectic form compatible with $J$. At present, however, this fantasy remains out of reach, especially since the analysis appears to be considerably more complicated (see \S\S\ref{estimates}) than in the four-dimensional Seiberg--Witten theory. Nevertheless, we present evidence, especially in dimensions $6$, suggesting that this program may indeed be feasible.\par 
\subsection{The equations and overview of the main results}
We begin by fixing notation. Let $(M,J)$ be a smooth manifold of dimension $n=2m$ with an almost complex structure $J$. We put a $J$-compatible Riemannian metric $g$ on $M$ to get an almost Hermitian manifold $(M,g,J)$. The fundamental two form $\omega$ is defined to be $\omega(u,v):=g(Ju,v)$.\par
Like in the case of complex manifolds, one defines operators
$\partial$, $\bar\partial$ by setting
$
\partial \alpha = (d\alpha)^{p+1,q}$ \text{and} $\bar\partial \alpha = (d\alpha)^{p,q+1}
$ for a form $\alpha \in \Omega^{p,q}$.
In general,
$
d \neq \partial + \bar\partial,
$
as $d\alpha$ can also have components of type
$(p-1,q+2)$ and $(p+2,q-1)$, denoted
$\bar N\alpha$ and $N\alpha$ respectively.
The operator $N$ is of order zero and can be identified with the
Nijenhuis tensor of $J$; it vanishes if and only if $J$ is integrable.
Observe that $d^2=0$ does not imply $\bar\partial^2=0$, unless
$J$ is integrable. For example, on functions we have $
\bar\partial^2 f = -\bar N(\partial f)$.\par
The almost complex structure $J$ defines a canonical spin$^\mathbb{C}$ structure which gives us the positive and negative spin bundles as
\begin{align*}
    S_+^{\mathrm{can}}= \bigoplus_p \Lambda^{0,2p},\hspace{4 ex}S_-^{\mathrm{can}}= \bigoplus_p \Lambda^{0,2p+1}
\end{align*} 
Strictly speaking, one should write $\Lambda^{p,q}_J$ for $\Lambda^{p,q}$, but this will be done only when the almost complex structure $J$ is not clear from context. The spin bundles associated to the other spin$^{\mathbb{C}}$ structures are obtained by twisting the canonical spin bundle $S^{\mathrm{can}}$ by a complex line bundle $\mathcal{L}: S=S^{\mathrm{can}}\otimes \mathcal{L}$.\par 
We write $\mathrm{cl}: \Lambda^* \xrightarrow{} \text{End}(S)$ for the Clifford action of differential forms on spinors. We follow the conventions of \cite{Friedrich}. In particular, (real) 1-forms act as skew-Hermitian endomorphisms. In dimension $n=2m$, the volume form satisfies $i^m\mathrm{cl}(dv) = \pm 1$ on $S_{\pm}$.\par
Let $L(S)$ denote the line bundle associated to the spin$^\mathbb{C}$ structure and $\mathcal{A}$ denote the set of unitary connections in $L(S)$. Given $A \in \mathcal{A}$, we write $D_A$ for the associated Dirac operator. For $S=S^{\mathrm{can}}\otimes \mathcal{L}, L(S)=K^{-1}\otimes \mathcal{L}^2$. $K$ is the canonical bundle associated to the almost complex structure $J$. \par
A spinor $\phi \in S_\pm$ defines a trace-free Hermitian endomorphism $E_\phi \colon S_\pm \to S_\pm$ via
\begin{equation}
E_\phi(\psi)= \left\langle \psi,\phi \right\rangle\phi - \frac{1}{2^{m-1}} |\phi|^2 \psi
\label{Ephi}
\end{equation}
Taubes' obstruction result \cite{Taubes1} for symplectic four manifolds from Seiberg--Witten equations can be explained as follows. In dimension $4$, Clifford multiplication gives an isomorphism
\begin{align}
    \mathrm{cl}: i\Lambda^2_+ \rightarrow i\mathfrak{su}(S_+)
\end{align}
between the imaginary self-dual $2$-forms and trace-free Hermitian endomorphisms of positive spinors. Given $\phi\in\Gamma(S_+)$, we write $q(\phi)\in i\Omega^2_+$ for the imaginary self-dual $2$-form corresponding to $E_\phi$ under \eqref{Ephi}. The Seiberg--Witten equations with Taubes' perturbation on an almost Hermitian four manifold $(M,g,J)$ for $A\in\mathcal{A},\phi\in\Gamma(S_+)$ are:
\begin{align}
    D_A\phi=0\label{Dirac 4d}\\
    F_A^+-F_{{A^{\mathrm{Ch}}}}^++\frac{ir}{4}\omega=q(\phi)\label{Curvature 4d}
\end{align}
$F_A^+$ denotes the self-dual part of the curvature of $A$, $F_{{A^{\mathrm{Ch}}}}^+$ denotes the self-dual part of the curvature of the anticanonical line bundle $K^{-1}$ with respect to the \textit{Chern} connection ${A^{\mathrm{Ch}}}$ \cite{Paul} (see also~\S\S\ref{symplectic}). The equations are parametrised by a real number $r\in\mathbb{R}$. The gauge group $\mathcal{G}=$ Map $(M,S^1)$ acts on $(A,\phi)$ by pull-back and this action preserves the space of solutions to the equations.\par
The standard Seiberg--Witten theory tells us that the equations \eqref{Dirac 4d}, \eqref{Curvature 4d} are elliptic modulo gauge and the moduli space is compact. Moreover when the spin$^{\mathbb{C}}$ structure is the canonical one induced by $J$, the index is zero. One way to see this is to look at the linearised equations modulo the zeroth order terms. The Dirac equation gives us the operator 
\begin{align}
    \Omega^0\oplus\Omega^{0,2}\xrightarrow[]{\bar\partial+\bar\partial^*} \Omega^{0,1}\label{eq1}
\end{align}
and since $\Lambda^2_+=\mathrm{Re}(\Lambda^0)\langle \omega\rangle\oplus \mathrm{Re}(\Lambda^{0,2})$, modulo gauge the curvature equation gives the following operator
\begin{align}
    \mathrm{Re}(\Omega^{0,1})\xrightarrow[]{d^*+d^+} \mathrm{Re}(\Omega^{0})\oplus \mathrm{Re}(\Omega^0)\omega \oplus \mathrm{Re}(\Omega^{0,2})\label{eq2}
\end{align}
In fact one can identify $\mathrm{Re}(\Omega^{0})\oplus\mathrm{Re}(\Omega^{0})\omega$ with $\Omega^0$ and realise that the operator \eqref{eq2} is secretly nothing but
\begin{align}
    \Omega^{0,1}\xrightarrow[]{\bar\partial+\bar\partial^*} \Omega^{0}\oplus\Omega^{0,2}\label{eq3}
\end{align}
Thereafter, the total index being the sum of index of \eqref{eq1} and the index of \eqref{eq3} is zero. Since the moduli space is compact, the Seiberg--Witten invariant is now worth a signed count of number of solutions. Taubes \cite{Taubes1} showed that when $(M,g,J)$ is almost K\"ahler, i.e., $\omega$ is closed, for $r$ large enough there exists a unique \textit{canonical} solution which is also nondegenerate and therefore up to a choice of orientation the Seiberg--Witten invariant for the canonical spin$^\mathbb{C}$ structure is $1$ (for $b^2_+\geq 2$).\par 
The compactness of the moduli space follows
ultimately from the fact that the left-hand side of the curvature equation \eqref{Curvature 4d}  is directly related to the Weitzenb\"ock formula
\begin{align*}
    D_A^*D_A=\nabla_A^*\nabla_A+\frac{s}{4}+\frac{1}{2}\mathrm{cl}(F_A)
\end{align*}
where $s$ is the scalar curvature of $(M,g)$. Thereafter, on an almost Hermitian $4$-manifold $(M,J)$, using Dirac operators $D_A$ parametrised by $A\in\mathcal{A}$, one can prescribe the projection of the Weitzenb\"ock remainder on $i\big($Re$(\Omega^0)\omega\oplus $Re$(\Omega^{0,2})\big)$ to get an elliptic system modulo gauge.\par
These are the features we want to generalise in our new gauge theoretic equations. We need some preparation before we define the equations on almost Hermitian manifolds $(M,g,J)$ of all even dimensions $n=2m\geq 4$.\par
We define a function $s:\mathbb{N}\rightarrow \{1,i\}$ by
\begin{align*}
s_k = \begin{cases}
	1 & \text{if } k \equiv 0 \text{ or } 3\, \mod 4\\
	i & \text{if } k \equiv 1 \text{ or } 2\, \mod 4	
	\end{cases}
\end{align*}
The point of this definition is that if $\beta_k \in \Omega^k$  then $s_k \mathrm{cl}(\beta_k)$ is a self-adjoint endomorphism of the spin bundle. At this point we need to make a distinction for the two cases: $m$ is odd and even. Clifford multiplication gives the following isomorphisms:
\begin{align}
\mathrm{cl}: i \Lambda^2 \oplus \Lambda^4 \oplus \cdots \oplus s_{m} \Lambda^{m}_+ \to i\mathfrak{su}(S_+) \hspace{5 ex} \text{if } m\text{ is even}
\label{4m-Clifford-isomorphism}  \\
\mathrm{cl}: i \Lambda^2 \oplus \Lambda^4 \oplus \cdots \oplus s_{m-1} \Lambda^{m-1} \to i\mathfrak{su}(S_+) \hspace{5 ex} \text{if } m\text{ is odd}\label{4m-2-Clifford-isomorphism}
\end{align}
where $\Lambda^{m}_+$ is the $+1$ eigenspace of $*$ acting on $\Lambda^{m}$ when $m$ is even. Given $\phi \in S_+$, we write $q(\phi)$ for the form which corresponds under~\eqref{4m-Clifford-isomorphism} or~\eqref{4m-2-Clifford-isomorphism} to $E_\phi \in i\mathfrak{su}(S_+)$.
Define $\Pi$ to be the projection:
\begin{align*}
    \Pi: \Omega^{\mathrm{even}}\otimes\mathbb{C}\rightarrow i\mathrm{Re}(\Omega^0)\omega\,\bigoplus_{p>0} s_{2p}\mathrm{Re}(\Omega^{0,2p})
\end{align*}
We consider equations for $(A,\beta,\phi)$ where $A\in\mathcal{A},\beta=\beta_3+\beta_5+\cdots+\beta_{(2\lceil\frac{m}{2}\rceil-1)}$ with $\beta_k\in$ Re$(\Omega^{0,k})$ and $\phi\in\Gamma(S_+)$. We set
\begin{align}
    \tilde{\beta}=\begin{cases}
	\displaystyle\sum_{k=1}^{(\frac{m}{2}-1)}\big(s_{2k+1}\beta_{2k+1}+s_{2m-2k-1}*\beta_{2k+1}\big) & \text{if } m\text{ is even}\\
	\displaystyle\sum_{k=1}^{(\lfloor\frac{m}{2}\rfloor-1)}s_{2k+1}\beta_{2k+1}+\displaystyle\sum_{k=1}^{\lfloor\frac{m}{2}\rfloor}s_{2m-2k-1}*\beta_{2k+1}& \text{if } m\text{ is odd} 	
	\end{cases}
\end{align}
and \begin{align}\label{Dirac 0}
    D_{A,\beta}=D_A+\mathrm{cl}(\tilde\beta)
\end{align}
This is a self-adjoint operator on spinors, which swaps chirality, $ D_{A,\beta}:\Gamma(S_+)\rightarrow\Gamma(S_-)$. Write
\begin{align}\label{curvature 0}
    F_{\beta}=\begin{cases}2\displaystyle\sum_{k=1}^{(\frac{m}{2}-1)}\big(s_{2k+2}d\beta_{2k+1}+(-1)^{\frac{m}{2}+k+1}s_{2k}d^*\beta_{2k+1}\big)&\text{if } m\text{ is even}\\
    2\displaystyle\sum_{k=1}^{(\lfloor\frac{m}{2}\rfloor-1)}s_{2k+2}d\beta_{2k+1}+(-1)^{(\lfloor\frac{m}{2}\rfloor+1)}2\displaystyle\sum_{k=1}^{\lfloor\frac{m}{2}\rfloor}s_{2k}d^*\beta_{2k+1}& \text{if } m\text{ is odd} 
    \end{cases}
\end{align}
The point of these equations is that as proved in \cite{Fine}, the Dirac operator $D_{A,\beta}$ has a Weitzenb\"ock formula of the form
\begin{align}\label{Weitzenbock}
    D_{A,\beta}^*D_{A,\beta}=\nabla_{A,\beta}^*\nabla_{A,\beta}+\frac{s}{4}+\frac{1}{2}\mathrm{cl}(F_A+F_\beta)+Q(\beta)
\end{align}
where $\nabla_{A,\beta}$ is a unitary connection on $S$ defined by $A$ and $\beta$ and $Q(\beta)$ is a zeroth order term which is purely algebraic in $\beta$. So, \eqref{curvature 0} prescribes the principal part of the Weitzenb\"ock remainder. 
\begin{definition}
We define $\psi_0\in\Omega^0$ to be the constant section of $M\times\mathbb{C}\subset S^{\mathrm{can}}_+$ which assigns $1\in\mathbb{C}$ to every point of $M$. 
\end{definition}
\begin{comment}
\begin{definition}\label{constant}
For $m\in\mathbb N, m>1$, we define a constant $c_m$ by the following equation
\begin{align*}
    \frac{1}{2^{m-1}}\int_M q(\psi_0)\wedge \omega^{m-1}=ic_m\int_M \omega^m
\end{align*}
\end{definition}
We calculate this constant for $m=2,3,4,5$ in \S. $c(2)=c(3)=c(4)=1, c(5)=\frac{1}{2}$. We are now ready to define the new gauge theoretic equations.
\end{comment}
\begin{definition}
The equations on an almost Hermitian manifold $(M,g,J)$ for 
$A\in\mathcal{A},\beta\in\displaystyle\bigoplus_{p>0}\mathrm{Re}(\Omega^{0,2p+1}),\phi\in\Gamma(S_+)$ are
\begin{align}
    D_{A,\beta}\phi=0\label{Dirac}\\
    \Pi(F_A-F_{{A^{\mathrm{Ch}}}}+F_\beta+rq(\psi_0))=\Pi(q(\phi))\label{Curvature}
\end{align}
\end{definition}
As mentioned before, $F_{{A^{\mathrm{Ch}}}}$ denotes the curvature of the anticanonical line bundle $K^{-1}$ with respect to the \textit{Chern} connection ${A^{\mathrm{Ch}}}$. The gauge group $\mathcal{G}=$ Map$(M,S^1)$ acts on $(A,\phi)$ by pull back and trivially on $\beta$. Notice when $n=2m=4$, $\beta=0$ and $\Pi(F_A-F_{{A^{\mathrm{Ch}}}}+rq(\psi_0))=F_A^+-F_{{A^{\mathrm{Ch}}}}^++\frac{ir}{4}\omega$ and $\Pi(q(\phi))=q(\phi)$ and hence the equations \eqref{Dirac}, \eqref{Curvature} indeed become the four-dimensional Seiberg--Witten equations \eqref{Dirac 4d}, \eqref{Curvature 4d}.\par 
One might wonder what is the significance of the parameter $r$ in the curvature equation. Why not just consider $r=1$ for example? The reason again stems from Taubes' analysis. For the original Seiberg--Witten equations $D_A\phi=0, F_A^+=q(\phi)$ on a symplectic manifold, if one perturbs the curvature equation by $i\omega$ on the left hand side, where $\omega$ is the symplectic form and then do the standard ``\textit{zooming in}'' procedure, i.e., rescale the metric in a geodesic ball around a point by a constant and consider the equations in the rescaled metric, the constant naturally appears as a multiple of the perturbation term $i\omega$. As we move closer and closer to the point, the rescaled metric converges to the flat metric on $\mathbb R^4$ which parallels taking this constant to infinity, i.e. taking \textit{Taubes' limit}. It is natural to ask whether a similar Taubes' limit ($r\rightarrow\infty$) persists for the pair of equations above in dimensions $n\geq 6$. We discuss this in detail in \S\S\ref{limit}.
\begin{remark}
    Similar gauge theoretic equations extending the Seiberg--Witten equations in higher dimensions appear in other articles. Most noteworthy mentions are Tanaka's preprint \cite{Tanaka} and the author's work with Fine \cite{Fine} (see also \cite{Partha}). In dimension $6$, the equations presented here are very similar to Tanaka's equations.\par
    In \cite{Fine}, the proposed equations can be defined on any spin$^\mathbb C$-manifold of any dimension $>2$. The equations proposed in this article can only be defined on manifolds with an almost complex structure. However, for the set of equations presented in this article, the canonical spin$^\mathbb C$-structure induced by the almost complex structure gives us an index $0$ problem to work with, which is not necessarily the case in \cite{Fine}. So, in principle one can hope to count number of solutions and get an invariant in the presence of a compactness result.\par
    One other major difference is when $m$ is odd, the recipe in \cite{Fine} requires two spinors and accordingly two Dirac and two curvature equations to form an elliptic system; whereas in our recipe we would only need one spinor and hence one Dirac and one curvature equation. 
\end{remark}
\begin{remark}
    The assumption that the base manifold $M$ admits an almost complex structure $J$ is not as big as it appears. For the four-dimensional Seiberg--Witten equations, there is no known example of a simply connected oriented closed four-dimensional manifold $M$ with $b^2_+>1$ and a spin$^\mathbb C$-structure $\mathfrak s$, such that the Seiberg--Witten invariant $SW_M(\mathfrak s)\neq 0$ and $\mathfrak s$ does not come from an almost complex structure. In dimension six, a manifold is spin$^\mathbb C$ iff it has an almost complex structure; see~\S\S\ref{spin geometry 6} for a proof. Conditions for the existence of almost complex structures in dimensions eight and ten are discussed in \cite{Heaps}.
\end{remark}
\begin{theorem}\label{index}
Let $(M,g,J)$ be a closed almost Hermitian manifold. The equations \eqref{Dirac} and \eqref{Curvature} are elliptic modulo gauge, with index
\begin{align*}
    2\int_M ch(\mathcal L)\wedge td(TM,J)-2\int_M td(TM,J) 
\end{align*}
\end{theorem}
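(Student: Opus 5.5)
We linearise the equations \eqref{Dirac}, \eqref{Curvature} at a configuration $(A,\beta,\phi)$, add the Coulomb gauge-fixing condition $d^*a=0$ for the variation $a\in i\Omega^1$ of the connection, and discard all zeroth order terms, which do not affect ellipticity or the index. The terms $\mathrm{cl}(\tilde\beta)\phi$, $\Pi(q(\phi))$, $F_{A^{\mathrm{Ch}}}$ and $rq(\psi_0)$ are all of order zero in the variations. The linearised operator is therefore, up to compact perturbation, the direct sum of two operators. The first is the twisted Dirac operator $D_A:\Gamma(S_+)\to\Gamma(S_-)$ on $\phi$. The second is the operator
\begin{align*}
(a,\beta)\mapsto \big(d^*a,\ \Pi(da+F_\beta)\big)
\end{align*}
on $i\mathrm{Re}(\Omega^{0,1})\oplus\bigoplus_{p>0}\mathrm{Re}(\Omega^{0,2p+1})$. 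Its target is $i\mathrm{Re}(\Omega^0)\oplus i\mathrm{Re}(\Omega^0)\omega\oplus\bigoplus_{p>0}s_{2p}\mathrm{Re}(\Omega^{0,2p})$.

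For the first operator, $S=S^{\mathrm{can}}\otimes\mathcal L$ and $D_A$ agrees up to order zero with $\sqrt2(\bar\partial+\bar\partial^*)$ twisted by $\mathcal L$ on $\Lambda^{0,\mathrm{even}}\to\Lambda^{0,\mathrm{odd}}$. So it is elliptic, and by Atiyah--Singer its complex index is $\int_M ch(\mathcal L)\, td(TM,J)$. Its real index is therefore $2\int_M ch(\mathcal L)\,td(TM,J)$.

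For the second operator, we mimic the four-dimensional argument from \eqref{eq2}--\eqref{eq3}. We identify $i\mathrm{Re}(\Omega^0)\oplus i\mathrm{Re}(\Omega^0)\omega$ with the complex functions $\Omega^0$, and each $\mathrm{Re}(\Omega^{0,k})$ with $\Omega^{0,k}$. Then we compute the symbol. The symbol of $d^*a$ together with the $\omega$-component of $da$ is the symbol of $\bar\partial^*$ on $\Omega^{0,1}$, exactly as in dimension four. The $(0,2p)$-components of $da$ and of $F_\beta$ come from $s_{2p}d\beta_{2p-1}$ and $s_{2p}d^*\beta_{2p+1}$ (with signs), whose $(0,2p)$-parts have the symbols of $\bar\partial$ and $\bar\partial^*$. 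The real parts $\mathrm{Re}(\Omega^{0,k})$ must be understood as the real forms $\alpha+\bar\alpha$, so $d$ and $d^*$ have both $\partial$- and $\bar\partial$-components. The key computation is to check that the projection $\Pi$ extracts precisely $\bar\partial\beta+\bar\partial^*\beta$ up to nonzero constants.

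This includes the Hodge-star terms in the odd-$m$ case, where $d^*\beta_{2k+1}$ for the top $k$ is used, and the even-$m$ case, where the top degree $(0,m)$ is reached only via $d^*$. In each case the second operator is then identified, modulo order zero and constants $s_k$ and signs, with
\begin{align*}
\bar\partial+\bar\partial^*:\Omega^{0,\mathrm{odd}}\to\Omega^{0,\mathrm{even}}.
\end{align*}
Checking degree by degree that source and target components match, and that no component is missing or doubled, is the main obstacle. It requires tracking which $\beta_{2k+1}$ appear in $\tilde\beta$ and $F_\beta$ for $m$ even versus odd, and verifying that the combined symbol is injective, hence an isomorphism by dimension count. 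I would first verify it at a point with $\xi$ of type making $\xi^{0,1}$ a unit vector, using the standard fact that $\xi^{0,1}\wedge+\iota$ is the Clifford action. The resulting operator is the adjoint of the untwisted Dolbeault operator, so it is elliptic with real index $-2\int_M td(TM,J)$.

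Summing the two indices gives the stated formula. Modulo gauge, the gauge fixing is already included, since the $\Omega^0$ target component absorbs $d^*a$ and constants in the gauge algebra are accounted for in the standard way. For $\mathcal L$ trivial the index is $0$.
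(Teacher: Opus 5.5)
Your proposal follows the paper's argument: drop the zeroth-order terms, split off the twisted Dirac operator, identify $(a,\beta)\mapsto(d^*a,\Pi(da+F_\beta))$ with $\bar\partial+\bar\partial^*:\Omega^{0,\mathrm{odd}}\to\Omega^{0,\mathrm{even}}$ after degree-by-degree rescaling (the paper does this explicitly via $\Phi$, $r_q$, $t_p$ in the proof of theorem \ref{transversal}), and add the two Atiyah--Singer indices. One small correction to your bookkeeping remark: for $m$ even, $\beta$ stops at degree $m-1$, so the top target degree $(0,m)$ is reached only via $d\beta_{m-1}$ (that is, via $\bar\partial$), not via $d^*$; also, the Hodge-star terms enter only through $\mathrm{cl}(\tilde\beta)$, which is zeroth order and does not affect the symbol.
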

In particular if $\mathcal L$ is the trivial complex line bundle, the index is $0$.\par
The algebraic term $Q(\beta)$ in the Weitzenb\"ock formula \eqref{Weitzenbock} turns out to be quite important in understanding the potential compactness or non-compactness of the moduli space of the solutions \eqref{Dirac}, \eqref{Curvature}. In dimension $n=4,\beta=0$ and therefore $Q(\beta)=0$.  For $n=6,8$,  formulae for $Q(\beta)$ appear in \cite{Fine}.
\begin{itemize}
    \item For $n=6,$ 
    \begin{align}
        Q(\beta)=-2|\beta|^2\label{Weitzenbock 6}
    \end{align}
    \item For $n=8,$
    \begin{align}
        Q(\beta)=-2\mathrm{cl}(\beta)^2-4|\beta|^2\label{Weitzenbock 8}
    \end{align}
\end{itemize}
\begin{theorem}\label{theorem1 symplectic}
Let $(M,g,J)$ be an almost K\"ahler (symplectic) manifold. For any $r>0$ there exists a canonical solution to the equations \eqref{Dirac}, \eqref{Curvature} for the canonical spin$^{\mathbb C}$ structure. We call it the ``canonical'' solution. For $n=4$, we recover Taubes' solution \cite{Taubes1} to the four-dimensional Seiberg--Witten equations \eqref{Dirac 4d}, \eqref{Curvature 4d} for the canonical spin$^{\mathbb C}$ structure.
\end{theorem}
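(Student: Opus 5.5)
The canonical solution will be an explicit ansatz modelled on Taubes' four-dimensional solution: take $\beta=0$, $A=A^{\mathrm{Ch}}$ (the Chern connection on $K^{-1}$, which is $L(S^{\mathrm{can}})$ since $\mathcal L$ is trivial), and $\phi=\sqrt{r}\,\psi_0$. We then verify the two equations \eqref{Dirac} and \eqref{Curvature} separately. With $\beta=0$, $\tilde\beta=0$ and $F_\beta=0$, so the system reduces to
\begin{align*}
D_{A^{\mathrm{Ch}}}\psi_0=0,\qquad \Pi\big(F_A-F_{A^{\mathrm{Ch}}}+rq(\psi_0)\big)=\Pi(q(\phi)).
\end{align*}

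For the curvature equation, $F_A-F_{A^{\mathrm{Ch}}}=0$ when $A=A^{\mathrm{Ch}}$. Moreover $q$ is quadratic: $E_{\lambda\phi}=|\lambda|^2E_\phi$, so $q(\sqrt r\,\psi_0)=r\,q(\psi_0)$. The two sides of \eqref{Curvature} therefore agree identically, even before projecting by $\Pi$.

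For the Dirac equation, we use the standard description of the Dirac operator of the canonical spin$^{\mathbb C}$ structure on an almost Hermitian manifold. Its coupling to the Chern connection on $K^{-1}$ is $D_{A^{\mathrm{Ch}}}=\sqrt2(\bar\partial+\bar\partial^*)$ plus a zeroth-order correction. That correction is built from the torsion of the Hermitian connection used to define the spinor connection. By Gauduchon's analysis of canonical Hermitian connections \cite{Paul}, the correction involves $d\omega$ (its $(2,1)+(1,2)$ part, equivalently the Lee form and $d^c\omega$) together with the Nijenhuis tensor. We apply this to $\psi_0$. Since $\psi_0$ is a constant function in $\Lambda^{0,0}$, we have $\bar\partial\psi_0=0$, and $\bar\partial^*\psi_0=0$ for degree reasons. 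The remaining terms are of the form $\mathrm{cl}(\theta)\psi_0$, with $\theta$ built from $d\omega$ and from the Nijenhuis tensor acting on $\psi_0$. In the almost Kähler case $d\omega=0$, so the Lee form and $d^c\omega$ vanish. Hence $D_{A^{\mathrm{Ch}}}\psi_0$ reduces to the Nijenhuis contribution. This is a $(0,3)$-type term, which on an almost Kähler manifold is tied to $(d\omega)^{0,3}$ through the identity $(d\omega)^{3,0}\sim N\cdot\omega$, and so it vanishes as well.

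The cleanest route to the Dirac equation is the identity $\nabla^{\mathrm{Ch}}$-parallelism, which I plan to prove as follows. The Chern connection on $K^{-1}$ induces, together with the Levi-Civita spin connection, a connection on $S^{\mathrm{can}}_+$. Under this connection $\nabla\psi_0$ equals Clifford multiplication of $\psi_0$ by (Levi-Civita minus Chern) torsion terms. For almost Kähler $g$, the Levi-Civita and Chern connections differ by $\nabla\omega$, and $\nabla\omega$ has type $(\Lambda^{1,0}\otimes\Lambda^{0,2})\oplus$ conjugate. Then $D\psi_0=\sum e_i\cdot\nabla_{e_i}\psi_0$ contracts this to a multiple of $d^*\omega$ and $(d\omega)$-components, which vanish. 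I expect this step to be the main obstacle: it requires carefully tracking the Clifford conventions of \cite{Friedrich} and checking that the trace part and the $(0,3)$ part of $\nabla\omega$ both drop out after contraction. I would do this computation in a local unitary frame in dimension $2m$, mirroring Taubes' four-dimensional argument \cite{Taubes1}.

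Finally, for $n=4$ the construction gives $A=A^{\mathrm{Ch}}$, $\phi=\sqrt r\,\psi_0$, and $q(\psi_0)=\tfrac{i}{4}\omega$ (consistent with the reduction stated after the Definition). This is exactly Taubes' canonical solution of \eqref{Dirac 4d} and \eqref{Curvature 4d}.
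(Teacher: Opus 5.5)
Your proposal is correct and follows the paper's route. The ansatz is the same ($A=A^{\mathrm{Ch}}$, $\beta=0$, $\phi=\sqrt r\,\psi_0$), and the curvature equation holds identically because $q(\sqrt r\,\psi_0)=r\,q(\psi_0)$. For the Dirac equation, the paper cites Gauduchon for $D_{A^{\mathrm{Ch}}}=\sqrt2(\bar\partial+\bar\partial^*)$ with no zeroth-order correction on an almost K\"ahler manifold, so $\bar\partial\psi_0=0$ suffices and your Nijenhuis discussion is unnecessary. Your backup computation, which writes $\nabla\psi_0$ as Clifford multiplication by $\nabla\omega$ and contracts to $\mathrm{cl}(d\omega+d^*\omega)\psi_0=0$, is a more direct version of the paper's alternative argument; that argument combines $\nabla_{A^{\mathrm{Ch}}}\psi_0\in\Gamma(\Lambda^1\otimes\Lambda^{0,2})$, the $\mathrm{cl}(\omega)$-eigenvalue trick, and $d\omega=d^*\omega=0$.
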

\begin{theorem}\label{transversal}
Let $(M,g,J)$ be a closed almost K\"ahler (symplectic) manifold. For large enough $r>0$, the canonical solution is transversely cut out.
\end{theorem}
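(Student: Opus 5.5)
The plan is to follow Taubes' four-dimensional argument: show that the linearisation of the gauge-fixed equations at the canonical solution has trivial kernel for $r$ large. Since Theorem~\ref{index} gives index zero for the canonical spin$^{\mathbb C}$ structure, injectivity then yields surjectivity. I will take the canonical solution from Theorem~\ref{theorem1 symplectic} in the form $A=A^{\mathrm{Ch}}$, $\beta=0$, $\phi=\sqrt{r}\,\psi_0$ (possibly with an $r$-dependent normalisation constant $c_m\sqrt r$ chosen so that the constant parts of $\Pi(rq(\psi_0))$ and $\Pi(q(\phi))$ match). The linearised operator $\mathcal{D}_r$ acts on $(a,b,\psi)\in i\Omega^1\oplus\bigoplus_{p>0}\mathrm{Re}(\Omega^{0,2p+1})\oplus\Gamma(S_+)$, together with the Coulomb condition $d^*a-i\,\mathrm{Im}\langle\psi,\phi\rangle=0$. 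Its components are
\begin{align*}
D_{A,\beta}\psi+\mathrm{cl}(a)\phi+\mathrm{cl}(\tilde b)\phi,\qquad
\Pi(da+F_b)-\Pi\big(Dq_\phi(\psi)\big).
\end{align*}
I will use the splitting $S_+^{\mathrm{can}}=\Lambda^0\oplus\Lambda^{0,2}\oplus\cdots$ and write $\psi=\psi^0+\psi'$, with $\psi^0$ the $\Lambda^0$-component and $\psi'$ the remaining components.

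First I will prove a Weitzenb\"ock-type identity for $\mathcal{D}_r^*\mathcal{D}_r$. The principal part is a Laplacian, as in the index computation. The zeroth order part comes from the cross terms $\mathrm{cl}(a)\phi$, $\mathrm{cl}(\tilde b)\phi$ and $Dq_\phi(\psi)$, and these carry a factor $r$. Since $\phi$ is parallel in the $\Lambda^0$ direction up to terms controlled by $d\omega=0$, the Chern connection and the Nijenhuis tensor, the key algebraic facts are these:
\begin{itemize}
\item $|\mathrm{cl}(a)\psi_0|^2=|a|^2$ and $|\mathrm{cl}(\tilde b)\psi_0|^2\gtrsim|b|^2$. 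The second holds because $\tilde b$ acting on $\psi_0$ lands injectively in the odd-degree spinors $\Lambda^{0,2p+1}$.
\item The map $\psi\mapsto\Pi(Dq_{\psi_0}(\psi))$ is injective on $\psi'$ and on $\mathrm{Re}\,\psi^0$, with a uniform lower bound.
\end{itemize}
Together these give an estimate
\begin{align*}
\|\mathcal{D}_r(a,b,\psi)\|_{L^2}^2\ \ge\ \|\nabla(a,b,\psi)\|^2+c\,r\big(|a|^2+|b|^2+|\psi'|^2+|\mathrm{Re}\,\psi^0|^2\big)-C\big(\|\cdot\|^2+\sqrt r\,\|\cdot\|\,\|\nabla\cdot\|\big).
\end{align*}
The factor $r$ comes from $|\phi|^2=r$, and the error terms come from $\nabla\phi$, the Nijenhuis tensor and the curvature. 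The remaining component $\mathrm{Im}\,\psi^0$ is controlled by the gauge-fixing equation: $d^*a=r^{1/2}\,\mathrm{Im}\,\psi^0$ up to normalisation, so it too receives an $r$-weight. This mirrors Taubes' treatment.

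For $r$ large, absorbing the error terms then gives $\|\mathcal{D}_r x\|\ge c\sqrt r\,\|x\|$, so the kernel vanishes. Since the index is zero, the cokernel vanishes as well. The cokernel can also be handled directly, because the formal adjoint has the same structure.

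I expect the main obstacle to be the algebraic step in higher dimensions. One must check that the coupling of $\beta$ into the Dirac and curvature equations, through $\mathrm{cl}(\tilde b)\psi_0$ and the terms $d\beta$ and $d^*\beta$ in $F_\beta$, produces a coercive $r$-weighted term rather than an indefinite one. One must also check that the cross terms between the Dirac part and the curvature part cancel at leading order in $r$; this is the analogue of the identity $\langle \mathrm{cl}(a)\phi,\psi\rangle$ versus $Dq(\psi)\cdot a$ in four dimensions. I would first verify this in dimension six, where $\beta=\beta_3$ and $\tilde\beta$ is simple, by an explicit computation in a unitary frame, and then argue degree by degree using the isomorphisms \eqref{4m-Clifford-isomorphism} and \eqref{4m-2-Clifford-isomorphism}.
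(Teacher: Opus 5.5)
\textbf{There is a gap: the estimate you propose does not close in higher dimensions.} Your reduction (index zero, so injectivity suffices; invertibility of the zeroth-order coupling; gauge fixing giving an $r$-weight to $\mathrm{Im}\,\psi^0$) matches the paper.

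Write the gauge-fixed linearisation as $\mathcal D_r=\mathcal D_0+\sqrt r\,\mathcal K$. Then $\|\mathcal D_rx\|^2=\|\mathcal D_0x\|^2+r\|\mathcal Kx\|^2+2\sqrt r\,\mathrm{Re}\langle\mathcal D_0x,\mathcal Kx\rangle$. The error $C\sqrt r\,\|x\|\,\|\nabla x\|$ that you record has exactly the same scaling as $\|\nabla x\|^2+r\|x\|^2$, so taking $r$ large does not absorb it. Your argument therefore needs the principal symbol of the cross term to vanish exactly, which you leave as something to check.

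Here it does not vanish. Use the paper's identification of $(a,b)$ with $\eta\in\Gamma(S_-^{\mathrm{can}})$. The system becomes $D\sigma+\sqrt r\,G\eta=0$ and $(D+R)\eta=\sqrt r\,H\sigma$, where $G$ and $H$ act on $\Lambda^{0,q}$ and $\Lambda^{0,p}$ ($p>0$) by complex scalars $G_q$ and $H_p$. For any degreewise weighting $\mu_q,\nu_p>0$ of the target norms, a symbol computation shows the cross term is lower order iff $\mu_q\overline{G_q}=\nu_pH_p$. Equivalently, $G_qH_p$ must be real and positive for every adjacent pair.

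That condition fails. The form $\tilde b$ contains both $s_qb_q$ and $s_{2m-q}*b_q$, and their actions on $\psi_0$ differ by a factor $\pm i$. Hence $A_q=s_q(1\pm i)$, and for $3\le q<m$ the product $G_qH_p$ is a positive multiple of $1\pm i$. So in real dimension $\ge 8$ there is no Taubes-type cancellation.

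Coercivity of $\mathcal K$ cannot replace it. What decides injectivity is the phase of $G_qH_p$. The frozen-coefficient determinant on each two-dimensional block is $|v^*|^2+r\,G_qH_p$. If $G_qH_p$ were negative real, this would vanish, and kernels would appear for arbitrarily large $r$.

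\textbf{How the paper gets around this.} The paper never expands $\|\mathcal D_rx\|^2$.
\begin{itemize}
\item It uses the first equation to eliminate $\eta=-r^{-1/2}G^{-1}D\sigma$. This gives $\mathcal A\sigma+r\sigma=0$, where $\mathcal A=H^{-1}(D+R)G^{-1}D$ does not depend on $r$.
\item The symbol of $\mathcal A$ acts by $|v^*|^2(G_qH_p)^{-1}$ on the pieces $V_p^{\uparrow}(v^*)$ and $V_p^{\downarrow}(v^*)$ of $\Lambda^{0,p}$. On $\Lambda^{0,0}$, where $H_0$ is only real-linear, a separate direct check is made.
\item Lemma \ref{positivity} verifies $\mathrm{Re}\,(G_qH_p)^{-1}>0$, a condition that tolerates the $\pm\pi/4$ phase.
\item So $\mathcal A$ is strongly elliptic, and G\aa rding's inequality with $r$-independent constants gives $\sigma=0$, and then $\eta=0$, for large $r$.
\end{itemize}
If you want to keep a direct bound $\|\mathcal D_rx\|\ge c\sqrt r\|x\|$, you would need a parameter-elliptic argument: nonvanishing of $|v^*|^2+r\,G_qH_p$ for $(v^*,r)\neq 0$. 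That still rests on the explicit degreewise constants, which is the step your proposal leaves as an expectation.
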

Taubes' \cite{Taubes1} obstruction result for symplectic structures on $4$-manifold hinges on the fact that for large enough $r$, the canonical solution is the unique solution to the equations \eqref{Dirac 4d}, \eqref{Curvature 4d} and it is cut out transversely. We do not quite have such a strong result, however we prove the following in dimension $6$.
\begin{theorem}\label{theorem2 symplectic}
    Let $(M,g,J)$ be a closed almost K\"ahler manifold of dimension $n=6$. For the canonical spin$^\mathbb C$-structure, if the three form $\beta$ is zero, then for large enough $r$, modulo gauge the canonical solution is the unique one. 
\end{theorem}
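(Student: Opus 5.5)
With $\beta=0$ in dimension $n=6$, the equations \eqref{Dirac}, \eqref{Curvature} reduce to $D_A\phi=0$ and $\Pi(F_A-F_{A^{\mathrm{Ch}}}+rq(\psi_0))=\Pi(q(\phi))$. By \eqref{Weitzenbock 6}, the algebraic term vanishes: $Q(0)=0$. My plan is to run Taubes' four-dimensional uniqueness argument \cite{Taubes1} in this setting. For the canonical spin$^{\mathbb C}$ structure, write a solution as $\phi=\sqrt r\,(\alpha\psi_0+\gamma)$ with $\alpha\in\Omega^0$ and $\gamma\in\Omega^{0,2}$. Write $A=A_0+2a$, where $A_0$ is the connection of the canonical solution, built from the Chern connection so that $D_{A_0}\psi_0=0$ on an almost K\"ahler manifold. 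The aim is to show that for $r$ large, $|\alpha|\to 1$ and $\gamma\to 0$, and then to gauge-fix.

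\textbf{Step 1: a priori $C^0$ bounds.} Apply the Weitzenb\"ock formula \eqref{Weitzenbock} with $\beta=0$ to $\phi$ and pair with $\phi$. Since $\Pi$ keeps exactly the $i\omega$ and $\mathrm{Re}(\Omega^{0,2})$ components, I need the components of $\mathrm{cl}(F_A)$ acting on $S_+$ in those directions. There is a gap here: $\mathrm{cl}(F_A)\phi$ also involves the $\omega$-primitive $(1,1)$-part of $F_A$, which \eqref{Curvature} does not control. So I would first check, by direct computation in $\Lambda^{0,\mathrm{even}}$ for $m=3$, that $\langle \mathrm{cl}(F_A)\phi,\phi\rangle$ depends only on the $\omega$-trace and the $(0,2)$-part of $F_A$ when $\phi$ is paired with itself. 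The reason this should work is that primitive $(1,1)$-forms act on $\Lambda^{0,0}\oplus\Lambda^{0,2}$ tracelessly in a way compatible with $E_\phi$. If this check fails, I would instead use the $L^2$ identity obtained by integrating over $M$ and use closedness of $\omega$ to trade the primitive part for $\int F_A\wedge\omega^2$, which is topological.

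Granted that, the maximum principle gives $|\alpha|^2+|\gamma|^2\le 1+C/r$, as in Taubes. The standard Taubes estimates then follow: $|\gamma|^2\le C r^{-1}(1-|\alpha|^2)+Cr^{-2}$, and an integral bound $\int (1-|\alpha|^2)\le C/r$, derived from the integral of the $\omega$-component of \eqref{Curvature} against $\omega^2$ together with the topological identity for $\int (F_A-F_{A^{\mathrm{Ch}}})\wedge\omega^2$.

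\textbf{Step 2: nonvanishing of $\alpha$.} Use the Dirac equation. Its $\Lambda^{0,1}$ component reads $\bar\partial_a\alpha+\bar\partial_a^*\gamma=0$, with error terms coming from the Nijenhuis tensor $N$ that are of lower order. Its $\Lambda^{0,3}$ component reads $\bar\partial_a\gamma+N$-terms $=0$. From these, derive a differential inequality for $w=1-|\alpha|^2$ of the form $\Delta w+ r w\le C(|\gamma|^2 r+\ldots)$. This gives $\sup(1-|\alpha|^2)\to0$ as $r\to\infty$; the rescaling picture of \S\S\ref{limit} is used to rule out concentration. This is the main obstacle. In dimension four the key point is that $\bar\partial$ on $\Lambda^{0,1}\to\Lambda^{0,2}$ is the top degree. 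Here the extra $\Lambda^{0,3}$ equation and the lack of control on the primitive $(1,1)$ curvature mean the Bochner-type estimate for $|\nabla_a\alpha|$ may not close. I would try first to show $|\nabla_a \alpha|^2\le Cr(1-|\alpha|^2)+C$, via the Weitzenb\"ock formula for $\bar\partial$ on functions twisted by $a$.

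\textbf{Step 3: uniqueness.} Once $|\alpha|\ge 1/2$ everywhere, gauge-fix so that $\alpha$ is real and positive. Then $(A,\phi)$ lies in a small neighbourhood of the canonical solution, in a norm for which Theorem~\ref{transversal} gives an invertible linearisation with a uniform bound on the inverse as $r\to\infty$ (the lowest eigenvalue grows with $r$). A contraction-mapping, or implicit-function, argument in the slice then shows that the only solution in this neighbourhood is the canonical one, modulo gauge.
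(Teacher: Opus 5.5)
There is a genuine gap, and it is already in Step 1. The check you propose there fails. By \eqref{Clifford isom}, Clifford multiplication identifies $i\mathrm{Re}(\Lambda^{1,1}_p)$ with $i\mathfrak{su}(\Lambda^{0,2}(\mathcal L))$. So for every $\gamma\neq 0$ there is a primitive $(1,1)$-form $\theta$ with $\langle\mathrm{cl}(\theta)\gamma,\gamma\rangle\neq 0$; take $\mathrm{cl}(\theta)=\gamma\otimes\gamma^*-\frac13|\gamma|^2\mathrm{Id}$. Hence $\langle\mathrm{cl}(F_A)\phi,\phi\rangle$ genuinely contains $(F_A)^{1,1}_p$, which \eqref{Curvature} does not prescribe. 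Setting $\beta=0$ removes $Q(\beta)$ but not this term, which is why the paper's $C^0$ lemma in \S\S\ref{estimates} keeps $\|(F_A)^{1,1}_p\|_{C^0}$.

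Your fallback, trading the primitive part for a topological term as in Lemma \ref{norm}, controls it only in $L^2$, and that cannot feed a maximum principle. So neither $|\alpha|^2+|\gamma|^2\le 1+C/r$ nor the Taubes bound on $|\gamma|^2$ is available; the latter uses the Weitzenb\"ock formula on the $\Lambda^{0,2}$ component, where the same term appears. You leave Step 2 open yourself. Step 3 needs $r$-uniform bounds for the inverse of the linearisation, in norms that also see the connection term $a$. The proof of Theorem \ref{transversal} gives invertibility for large $r$ but not such estimates, and pointwise information on $(\alpha,\gamma)$ says nothing about $a$.

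The missing idea is that no pointwise control is needed. You can avoid $(F_A)^{1,1}_p$ entirely by applying a Bochner formula only to the $\Omega^0$ component. In your notation $\phi^0=\sqrt r\,\alpha$, $\phi^{0,2}=\sqrt r\,\gamma$, $B=a$, and the identity $2\bar\partial_B^*\bar\partial_B=\nabla_B^*\nabla_B-i\Lambda(F_B)$ involves only $\Lambda(F_B)$. The derivation goes as follows.

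\begin{itemize}
\item Integrate the Bochner identity against $\phi^0$.
\item By \eqref{Dirac 6 01}, $\int|\bar\partial_B\phi^0|^2=-\int\langle\phi^{0,2},\bar\partial_B^2\phi^0\rangle$, where $\bar\partial_B^2\phi^0=F_B^{0,2}\phi^0-\bar N\partial_B\phi^0$.
\item Substitute \eqref{Curvqture 6 1} and \eqref{Curvqture 6 2} with $\beta=0$.
\item Since $\mathcal L$ is trivial, $\int\Lambda(F_B)\,dv=0$, which rewrites $\frac{3r}{8}\int(|\phi^0|^2-r)$ as $\frac r8\int|\phi^{0,2}|^2$.
\end{itemize}

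This gives
\[
\int\Big(|\nabla_B\phi^0|^2+\frac38|\phi^0|^2|\phi^{0,2}|^2+\frac38\big(|\phi^0|^2-r\big)^2+\frac r8|\phi^{0,2}|^2\Big)dv=2\,\mathrm{Re}\int\langle\bar N\partial_B\phi^0,\phi^{0,2}\rangle\,dv .
\]
By Peter--Paul the right side is at most $\int\big(\frac12|\nabla_B\phi^0|^2+C|\phi^{0,2}|^2\big)dv$. So for $r>8C$ you get $\phi^{0,2}=0$, $|\phi^0|^2=r$ and $\nabla_B\phi^0=0$, which is the canonical solution up to gauge. No nonvanishing argument for $\alpha$ and no implicit-function argument is needed.
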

\begin{comment}
    \begin{theorem}\label{transversal}
    Let $(M,g,J)$ be a closed almost K\"ahler manifold of dimension $n=6$. For large enough $r$, the canonical solution described in theorem \ref{theorem1 symplectic} is transversely cut out.
\end{theorem}
\end{comment}
The Seiberg--Witten equations \eqref{Dirac 4d}, \eqref{Curvature 4d} exhibit notable behavior when applied to Kähler surfaces, specifically in relation to vortex equations. 
Let $(M,g,J)$ be a closed K\"ahler manifold of complex dimension $m$, and let $\mathcal L\to M$ be a Hermitian line bundle. For $\tau\in\mathbb{R}$, the $\tau$-vortex equations for a unitary connection $B$ on $\mathcal L$ and a section $\phi\in\Omega^0(M,\mathcal L)$ are \begin{align} \bar\partial_B\phi &=0\\ i\Lambda F_B &=\frac12\bigl(\tau-|\phi|^2\bigr)\\ F_B^{0,2}&=0 \end{align} Here $\Lambda$ denotes the $L^2$-adjoint of wedging with the K\"ahler form $\omega$. The vortex moduli space has a natural map to the Picard variety parametrising holomorphic structures on the underlying smooth line bundle $\mathcal L$. Over a holomorphic line bundle $\mathcal L$ admitting non-zero holomorphic sections, the fibre is \[ \mathbb{P}H^0(M,\mathcal L)\] A vortex pair $(B,\phi)$ determines the effective divisor \[ D=\phi^{-1}(0) \] whose Poincar\'e dual is $c_1(L)$. Conversely, an effective divisor $D$ with \[ \operatorname{PD}[D]=c_1(\mathcal L) \] determines a holomorphic line bundle together with a holomorphic section vanishing precisely along $D$. Its degree is \[ \deg(\mathcal L) = \int_M c_1(\mathcal L)\wedge\frac{\omega^{m-1}}{(m-1)!}\] For \[ \tau>\frac{4\pi\,\deg(\mathcal L)}{\operatorname{vol}(M)}\] the vortex moduli space is identified with the corresponding space of effective divisors representing $c_1(\mathcal L)$; see \cite{Brad}.\par
On closed K\"ahler surfaces, the Seiberg--Witten equations can be reduced, under cer
tain conditions, to the vortex equations. This reduction was shown by Witten \cite{EW} as part of a broader connection between Seiberg--Witten theory and symplectic geometry. For example, in symplectic $4$-manifolds, Taubes established a correspondence where Seiberg--Witten solutions correspond to pseudoholomorphic curves \cite{Taubes4}, which are governed by the vortex-like equations under certain limits. This insight has allowed the Seiberg--Witten equations to be applied in tackling problems such as the Thom conjecture \cite{KM}. Vortices also appear as solutions of the equations \eqref{Dirac}, \eqref{Curvature} in higher dimensions.
\begin{theorem}\label{vortex}
Let $(M,g,J)$ be a K\"ahler manifold. For $B$ a unitary connection on $\mathcal L,$ take $A={A^\mathrm{Ch}}+2B,\beta=0$ and $\phi=\varphi\in\Omega^0(M,\mathcal L)\subset \Gamma(S_+)$. Then equations \eqref{Dirac}, \eqref{Curvature} reduce to vortex-like equations
\begin{align}
    \bar\partial_B \varphi=0\label{red1}\\
    i\Lambda F_B=\frac{c_m}{2}(r-|\varphi|^2)\label{red2}\\
    F_B^{0,2}=0\label{red3}
\end{align}
where $c_m$ is a positive constant depending only on the dimension $2m=n$.
\end{theorem}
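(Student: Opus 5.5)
The proof substitutes the ansatz into both equations and then works through them one summand at a time.

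\textbf{Dirac equation.} On a Kähler manifold the Levi-Civita connection preserves $J$. For the canonical spin$^{\mathbb C}$ structure twisted by $\mathcal L$, the Dirac operator coupled to $A^{\mathrm{Ch}}+2B$ (the connection on $L(S)=K^{-1}\otimes\mathcal L^2$) is
\begin{align*}
D_A=\sqrt2\,(\bar\partial_B+\bar\partial_B^*)
\end{align*}
acting on $\Omega^{0,*}(\mathcal L)$. The normalisation constant depends on the Clifford conventions of \cite{Friedrich} but is irrelevant here. Since $\beta=0$, we have $\tilde\beta=0$ and $D_{A,\beta}=D_A$. For $\varphi\in\Omega^0(\mathcal L)$ the term $\bar\partial_B^*\varphi$ vanishes, so \eqref{Dirac} is exactly $\bar\partial_B\varphi=0$, which is \eqref{red1}.

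\textbf{Curvature equation.} Since $\beta=0$ we have $F_\beta=0$, and $F_A-F_{A^{\mathrm{Ch}}}=2F_B$. The left-hand side of \eqref{Curvature} becomes
\begin{align*}
\Pi\bigl(2F_B+r\,q(\psi_0)\bigr).
\end{align*}
The next step is to compute $q(\varphi)$ for $\varphi\in\Lambda^{0,0}\otimes\mathcal L$. The endomorphism $E_\varphi$ equals $|\varphi|^2$ times the projection onto $\Lambda^{0,0}$ minus $2^{1-m}|\varphi|^2$ times the identity, so it is $|\varphi|^2E_{\psi_0}$. Hence $q(\varphi)=|\varphi|^2q(\psi_0)$, with the $\mathcal L$-phase cancelling.

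I then need to identify $q(\psi_0)$. It is $U(m)$-invariant, because $\psi_0$ is, so it lies in the span of the invariant even forms $\omega^k$. Its components are therefore multiples of $\omega^k$, in particular of type $(k,k)$, and $\Pi$ kills all of them except the $\omega$ component. So $\Pi(q(\psi_0))=i\kappa_m\,\omega$ for some real constant $\kappa_m$.

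Now split \eqref{Curvature} into its summands.
\begin{itemize}
\item The $\mathrm{Re}(\Omega^{0,2})$ component: $\Pi$ applied to the real 2-form $-iF_B$ retains its $(0,2)+(2,0)$ part, so this component gives $F_B^{0,2}=0$, which is \eqref{red3}.
\item Higher $\Omega^{0,2p}$ components, $p\ge2$: these receive nothing, since $F_B$ is a 2-form and the $\omega^k$ have type $(k,k)$.
\item The $i\,\mathrm{Re}(\Omega^0)\omega$ component: this is the projection of $2F_B$ onto $\omega$, namely $\frac{2}{m}(\Lambda F_B)\,\omega$ up to the normalisation $|\omega|^2=m$. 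It gives
\begin{align*}
\frac{2}{m}\,i\Lambda F_B\;\propto\;\kappa_m\,(r-|\varphi|^2),
\end{align*}
which is \eqref{red2} with $c_m$ a fixed multiple of $\kappa_m$.
\end{itemize}

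\textbf{Main obstacle: positivity of $c_m$.} Positivity of $c_m$ amounts to determining the sign of $\kappa_m$. To do this, pair $E_{\psi_0}$ with $\mathrm{cl}(i\omega)$ via the trace. On the summand $\Lambda^{0,q}$, $\mathrm{cl}(\omega)$ acts by $i(m-2q)$ up to a sign fixed by the conventions. So
\begin{align*}
\operatorname{tr}\bigl(E_{\psi_0}\,\mathrm{cl}(i\omega)\bigr)=\pm\Bigl(m-2^{1-m}\sum_{q\ \mathrm{even}}\tbinom{m}{q}(m-2q)\Bigr),
\end{align*}
up to normalisation. The sum is $m2^{m-1}-2\cdot m2^{m-2}=0$ for $m\ge2$, so the trace equals $\pm m\ne0$.

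Hence $\kappa_m\ne0$, and its sign is dimension-independent once the conventions are fixed. That sign must agree with the four-dimensional case, where $\Pi(rq(\psi_0))=\frac{ir}{4}\omega$ as stated in the paper. This gives $c_m>0$; the orthogonality of the Clifford isomorphism under the trace pairing up to positive constants justifies the comparison.
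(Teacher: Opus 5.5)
Your proposal is correct and follows the paper's route: Gauduchon's formula $D_A=\sqrt2(\bar\partial_B+\bar\partial_B^*)$ for the Dirac equation, $U(m)$-invariance giving $\Pi(q(\varphi))=|\varphi|^2\Pi(q(\psi_0))\propto i\omega$, splitting the curvature equation by type, and pairing $E_{\psi_0}$ with $\mathrm{cl}(i\omega)$ to fix the constant. The only soft spot is fixing the sign by appeal to dimension four. You do not need that: Lemma~\ref{lemma 1} gives $\mathrm{cl}(i\omega)\psi_0=m\psi_0$ in every dimension, so $\langle E_{\psi_0},\mathrm{cl}(i\omega)\rangle=m>0$ directly. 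Combined with $|\mathrm{cl}(\theta)|^2=2^{m-1}|\theta|^2$, the paper even gets $c_m=m/2^{m-1}$ for $m\ge3$.
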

Moreover if we make the identification: $\tilde\varphi=\sqrt{c_m}\varphi,\tau=c_m r$, the equations above read exactly as the vortex equations:
\begin{align*}
    \bar\partial_B \tilde\varphi=0\\
    i\Lambda F_B=\frac{1}{2}(\tau-|\tilde\varphi|^2)\\
    F_B^{0,2}=0
\end{align*}
Therefore equations \eqref{red1}, \eqref{red2}, \eqref{red3} have the same moduli space as the vortex equations for $r>\frac{4\pi\mathrm{deg}(\mathcal L)}{c_m\mathrm{vol}(M)}$. \par 
One of the major reasons behind the success of the Seiberg--Witten theory is the compactness of the moduli space of the equations. The heart of the compactness argument stems from the standard Weitzenb\"ock formula:
\begin{align*}
    D_A^2=\nabla_{A}^*\nabla_A+\frac{s}{4}+\frac{1}{2}\mathrm{cl}(F_A)
\end{align*}
and the maximum principle. However in dimension higher than $4$, we have this extra parameter $\beta\in\oplus_{p>0}\mathrm{Re}(\Omega^{0,2p+1})$ and as explained before, the corresponding Weitzenb\"ock formula \eqref{Weitzenbock} has a quadratic term $Q(\beta)$ which is troublesome while trying to get a uniform $C^0$-limit of the spinor mimicking the $4$-dimensional Seiberg--Witten theory. Nevertheless we prove some a priori estimates for the variables in~\S\S\ref{estimates}.\par
Donaldson \cite{Donaldson} realised that there exists a variational description of the Seiberg--Witten equations. In dimension $4$, on a compact manifold with a spin$^\mathbb C$ structure, solutions $(A,\phi)$ to the unperturbed Seiberg--Witten equations: $D_A\phi=0,F_A^{+}=q(\phi)$ are absolute minima of the following energy functional:
\begin{align*}
    \mathcal E(A,\phi)=\int \Big(|\nabla_A\phi|^2+\frac{1}{2}|F_A|^2+\frac{1}{8}(|\phi|^2+s)^2\Big)dv
\end{align*}
An analogous energy functional also exists for the Seiberg--Witten equations with Taubes' perturbation \eqref{Dirac 4d}, \eqref{Curvature 4d} on a closed almost complex $4$-manifold. We prove a similar result for solutions $(A,\beta,\phi)$ to the equations \eqref{Dirac}, \eqref{Curvature} on a closed K\"ahler $3$-fold. Similar results hold in all dimensions, with different constants appearing in various places; we work in dimension $6$ to keep the discussion concrete.
\begin{definition}
Let $(M,g,J)$ be a closed K\"ahler $3$-fold. For $S_+=\oplus_p\Omega^{0,2p}(\mathcal{L}), A\in\mathcal{A},$ one can write $A=A^{\mathrm{Ch}}+2B$ with abuse of notation. Here $A^{\mathrm{Ch}}$ is the Chern connection on the $K^{-1}$ and $B$ is a unitary connection on $\mathcal{L}$. Given such an $A=A^{\mathrm{Ch}}+2B,\beta\in$ Re$(\Omega^{0,3}),\phi\in\Gamma(S_+)$, we define the energy of $(A,\beta,\phi)$ to be 
\begin{align*}
    &\mathcal E(A,\beta,\phi)\\
    &:=\int\Big(|\nabla_{A,\beta}\phi|^2+\frac{s}{4}|\phi|^2+2|F_B|^2+4|d^{*}\beta|^2+\frac{3r^2}{16}+\frac{1}{48}(3|\phi^0|^2-|\phi^{0,2}|^2)^2+\frac{1}{2}|\phi^0|^2|\phi^{0,2}|^2\\
    &\hspace{5 ex}-\frac{2}{3}|\Lambda(F_B)|^2-2|\beta|^2|\phi|^2-\frac{r}{8}(3|\phi^0|^2-|\phi^{0,2}|^2)+\big\langle 4(F_B)^{1,1}_p+2F_{A^\mathrm{Ch}},q(\phi)\big\rangle\Big)dv\\
    &\hspace{5 ex}-2\pi r\mathrm{deg}(\mathcal L)
\end{align*}
\end{definition}
\begin{theorem}\label{Energy}
    In dimension $6$, on a closed K\"ahler $3$-fold, for any $(A,\beta,\phi)$, we have the inequality
    \begin{align*}
        \mathcal E(A,\beta,\phi)\geq -8\pi^2\langle c_1(\mathcal L)^2\cup [\omega],[M]\rangle
    \end{align*}
    with equality if and only if $(A,\beta,\phi)$ solves the equations \eqref{Dirac}, \eqref{Curvature}.
\end{theorem}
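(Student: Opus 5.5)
The argument is a Bogomolny-type completion of squares, modelled on Donaldson's variational description of the four-dimensional equations. I will write down the quantity
\[
\int_M \Big(|D_{A,\beta}\phi|^2 + \lambda\,\big|\Pi(F_A-F_{A^{\mathrm{Ch}}}+F_\beta+rq(\psi_0)-q(\phi))\big|^2\Big)\,dv ,
\]
where $\lambda>0$ is a weight fixed by the six-dimensional Clifford conventions. The goal is to show it equals $\mathcal E(A,\beta,\phi)+8\pi^2\langle c_1(\mathcal L)^2\cup[\omega],[M]\rangle$. Since the integral is a sum of nonnegative squares, the inequality follows at once. Equality holds exactly when both integrands vanish, which is exactly \eqref{Dirac} and \eqref{Curvature}.

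\textbf{Step 1: the Dirac term.} I apply the Weitzenb\"ock formula \eqref{Weitzenbock} with $n=6$, so that $Q(\beta)=-2|\beta|^2$ by \eqref{Weitzenbock 6}, and integrate by parts. This gives
\[
\int|D_{A,\beta}\phi|^2=\int\Big(|\nabla_{A,\beta}\phi|^2+\frac{s}{4}|\phi|^2-2|\beta|^2|\phi|^2+\frac12\langle \mathrm{cl}(F_A+F_\beta)\phi,\phi\rangle\Big).
\]
Using $\langle \mathrm{cl}(\eta)\phi,\phi\rangle=c\langle\eta,q(\phi)\rangle$ on the relevant components, with $A=A^{\mathrm{Ch}}+2B$, the last term becomes a pairing of $2F_B+F_{A^{\mathrm{Ch}}}+F_\beta$ with $q(\phi)$. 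Only the components of type $i\omega$, $\Lambda^{0,2}$ and $\Lambda^{4}$ pair nontrivially.

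\textbf{Step 2: the curvature term.} I expand $|\Pi(\cdots)|^2$. The $|\Pi q(\phi)|^2$ term is computed in the splitting $\phi=\phi^0+\phi^{0,2}$. It should produce $\frac{1}{48}(3|\phi^0|^2-|\phi^{0,2}|^2)^2+\frac12|\phi^0|^2|\phi^{0,2}|^2$, since the $\omega$-component of $q(\phi)$ is proportional to $3|\phi^0|^2-|\phi^{0,2}|^2$. The $r$-terms give $\frac{3r^2}{16}$, together with the cross term $-\frac r8(3|\phi^0|^2-|\phi^{0,2}|^2)$. The cross term $r\langle F_B,\omega\rangle$ integrates to $-2\pi r\deg(\mathcal L)$ up to normalisation.

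On a K\"ahler manifold $\bar\partial^2=0$, and the $\Lambda^{0,3}$ parts satisfy $d\beta=0$ with $d^*\beta\in\Lambda^{0,2}$. So $|F_\beta|^2$ gives $4|d^*\beta|^2$. The cross terms between $F_\beta$ and $F_B$ integrate to zero by Stokes and $\bar\partial$-closedness of $F_B^{0,2}$. The cross terms between $F_\beta$ and $q(\phi)$ cancel the corresponding ones from Step 1 if $\lambda$ is chosen correctly. The cross terms between $F_B$ and $q(\phi)$ combine with Step 1 into $\langle 4(F_B)^{1,1}_p+2F_{A^{\mathrm{Ch}}},q(\phi)\rangle$ after sign bookkeeping.

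\textbf{Step 3: the topological term.} The expansion leaves the $\Pi$-projected part of $|F_B|^2$, namely its $\omega$- and $(0,2)$-parts. To get the full $2|F_B|^2-\frac23|\Lambda F_B|^2$ in $\mathcal E$, I use the pointwise identity on a K\"ahler threefold
\[
F\wedge F\wedge\omega = \big(c_1|\Lambda F|^2 + c_2|F^{0,2}|^2 - c_3|F^{1,1}_0|^2\big)\,dv .
\]
Integrating, and using that $\omega$ is closed with $[iF_B/2\pi]=c_1(\mathcal L)$, gives $\int F_B\wedge F_B\wedge\omega=-4\pi^2\langle c_1(\mathcal L)^2\cup[\omega],[M]\rangle$. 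This lets me trade the missing $|F^{1,1}_0|^2$ part for the topological constant, which accounts for $-8\pi^2\langle c_1(\mathcal L)^2\cup[\omega],[M]\rangle$.

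\textbf{Main obstacle.} The difficulty is entirely in the constants. I must pin down the Clifford normalisations of $q$ and $\Pi$ in dimension six: the factor relating $\langle\mathrm{cl}(\eta)\phi,\phi\rangle$ to $\langle\eta,q(\phi)\rangle$, and $\frac{1}{2^{m-1}}=\frac14$ in $E_\phi$. Only with these fixed do the coefficients $\frac{1}{48}$, $\frac12$, $\frac{3}{16}$, $\frac23$ come out right and the $F_\beta$ cross terms cancel exactly. I would first compute $q(\psi_0)$ and $q$ of a unit $(0,2)$-spinor explicitly in a unitary frame on $\mathbb C^3$ to fix these constants, and then verify the $\lambda$ needed for cancellation.
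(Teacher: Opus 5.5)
Your proposal is correct and is essentially the paper's proof: the paper expands $\int\big(|D_{A,\beta}\phi|^2+4|\Pi(F_B+id^*\beta+\frac{ir}{8}\omega-\frac12 q(\phi))|^2\big)\,dv$. This is your expression with $\lambda=1$, since $F_\beta=2id^*\beta$ and $\Pi q(\psi_0)=\frac{i}{4}\omega$ in dimension six, and the paper uses exactly your ingredients: the Weitzenb\"ock formula, $\langle\mathrm{cl}(\theta)\phi,\phi\rangle=4\langle\theta,q(\phi)\rangle$ for all $\theta\in i\Omega^2$, Bianchi to kill the $F_B$--$d^*\beta$ cross term, and the eigenvalues of $\xi\mapsto *(\xi\wedge\omega)$ to trade $\|(F_B)^{1,1}_p\|^2$ for the topological term. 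Two harmless slips: the primitive $(1,1)$ part does pair nontrivially with $q(\phi)$, which is why $\langle 4(F_B)^{1,1}_p,q(\phi)\rangle$ survives as your Step 2 correctly records, and $d\beta$ need not vanish on a K\"ahler threefold, though it does not enter $F_\beta$ in dimension six.
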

This article is structured as follows. In \S\ref{s2}, we prove the ellipticity of the equations \eqref{Dirac}, \eqref{Curvature} modulo gauge and compute their index, establishing theorem \ref{index}. \S \ref{soln} is devoted to solutions of the equations: we first review the Dirac operator and Clifford multiplication in almost Kähler geometry, then construct the canonical solution and prove its transversality for large \(r\), proving theorems \ref{theorem1 symplectic} and \ref{transversal}. We also study the K\"ahler case, where the equations reduce to the vortex equations, proving theorem \ref{vortex}, and discuss the higher-dimensional analogue of Taubes' limiting procedure. In \S \ref{sec4}, we specialise to dimension six. After discussing some relevant spin geometry in dimension six, we derive a priori estimates and prove the uniqueness of the canonical solution under the assumption \(\beta=0\), establishing theorem \ref{theorem2 symplectic}. Finally, we derive the energy identity and prove theorem \ref{Energy}.\par
\vspace{2 ex}
\textbf{Acknowledgments.} The author wishes to thank Joel Fine, for his encouragement and for various fruitful discussions. This work was supported by European Union’s Horizon 2020 Europe research and innovation programme under the Marie Sklodowska-Curie grant agreement no 101034255.
\section{Ellipticity and the index}\label{s2}
\begin{proof}[Proof of theorem \ref{index}]
Ignoring the zeroth order terms, the linearisation of equations \eqref{Dirac}, \eqref{Curvature} (modulo gauge) at a solution $(A,\beta,\phi)$ lead us to the following differential operator
\begin{align}
    \Gamma(S_+)\bigoplus i\Omega^1\bigoplus_{p>0}\mathrm{Re}(\Omega^{0,2p+1})&\rightarrow \Gamma(S_-)\bigoplus i\mathrm{Re}(\Omega^0)\bigoplus i\mathrm{Re}(\Omega^0)\omega\,\bigoplus_{p>0} \mathrm{Re}(\Omega^{0,2p})\\
   \big(\varphi,a,b\big)&\mapsto \big(D_A\varphi,d^*a,\Pi(da+F_b)\big)\label{1}
\end{align}
The next step is to identify $\Omega^0$ as Re$(\Omega^0)\oplus$ Re$(\Omega^0)\omega$ and realise that up to some signs, factors of $i$, constants and some zeroth order terms, the operator above is secretly nothing but
\begin{align}
    \Gamma(S_+)\bigoplus\Omega^{0,\mathrm{odd}}&\rightarrow \Gamma(S_-)\bigoplus \Omega^{0,\mathrm{even}}\\
   \big(\varphi,\alpha\big)&\mapsto \big(D_A\varphi, (\bar\partial+\bar\partial^*)\alpha\big)\label{2}
\end{align}    
More concretely speaking the ellipticity of the operator \eqref{1} is equivalent to the ellipticity of the operator \eqref{2}. The explicit calculations which make the symbols of these two operators equivalent are a little tedious, for the details please see proof of theorem \ref{transversal} in \S\S\ref{ansatz}. Operator \eqref{2} is elliptic simply because it comprises the Dirac and the Dolbeault operator. Moreover their indices are also the same and hence the theorem follows from the Atiyah--Singer index theorem \cite{index}.
\end{proof}
\begin{comment}
\subsection{Weitzenb\"ock formulae}\label{Weitzenbock}
The main tool behind proving the compactness for the four dimensional Seiberg--Witten moduli space is the Weitzenb\"ock formula for the Dirac operator $D_A$:
\begin{align*}
    D_A^*D_A=\nabla_A^*\nabla_A+\frac{s}{4}+\frac{1}{2}\mathrm{cl}(F_A)
\end{align*}
In the higher dimensional gauge theoretic equations \eqref{Dirac}, \eqref{Curvature} we have a perturbed version of the Dirac opertaor: $D_{A,\beta}$, which is still self-adjoint. We prove the corresponding Weitzenb\"ock formulae in dimensions $n=6,8$. Explicit formulae are stated in \cite{Fine}, where the details of the computations of the quadratic algebraic part are omitted. We provide these details here, following the notation of \cite{Fine}.
\begin{theorem}
For $n=6$,
\begin{align}
   D_{A,\beta}^*D_{A,\beta}=\nabla_{A,\beta}^*\nabla_{A,\beta}+\frac{s}{4}+\frac{1}{2}\mathrm{cl}(F_A+F_\beta)-2|\beta|^2
\end{align}
\end{theorem}
\begin{proof}

\end{proof}
\begin{theorem}
For $n=8$,
\begin{align}
   D_{A,\beta}^*D_{A,\beta}=\nabla_{A,\beta}^*\nabla_{A,\beta}+\frac{s}{4}+\frac{1}{2}\mathrm{cl}(F_A+F_\beta)-2\mathrm{cl}(\beta)^2-4|\beta|^2
\end{align}
\end{theorem}
\end{comment}
\section{Solutions to the equations}\label{soln}
\subsection{Dirac operator in almost K\"ahler geometry}
For any almost Hermitian manifold $(M,g,J)$ of dimension $n=2m\geq 4$, a connection $\nabla$ on $M$ (acting on sections of the tangent bundle $TM$) is \textit{Hermitian} if it preserves the metric $g$ and the almost complex structure $J$:
\begin{align*}
    \nabla g=0,\, \nabla J=0
\end{align*}
Each connection $\nabla$ determines a \textit{Cauchy Riemann operator}, denoted by $\partial^\nabla$, defined as the $(0,1)$-part of $\nabla$:
\begin{align*}
    \bar\partial^\nabla _X Y=\frac{1}{2}(\nabla _X Y+J \nabla_{JX} Y)
\end{align*}
There is also an \textit{intrinsic} Cauchy Riemann operator coming from the almost complex structure $J$ defined by
\begin{align*}
    \bar\partial_X Y=\frac{1}{4}([X,Y]+[JX,JY]+J[JX,Y]-J[X,JY])
\end{align*}
Gauduchon \cite{Paul} distinguished a real affine line in the set of Hermitian connections. We pick one element from the affine line, the \textit{Chern connection}: $\nabla^{\text{Ch}}$. The terminology stems from the fact that, in the integrable case, it coincides with the Chern connection of the tangent bundle viewed as a Hermitian, holomorphic bundle of rank $m$. The Chern connection is characterised by the fact that its Cauchy Riemann operator coincides with the intrinsic one:
\begin{align*}
    \bar\partial^{\nabla^{\text{Ch}}}= \bar\partial
\end{align*}
The metric $g$ defines a Hermitian metric on the anticanonical bundle $K^{-1}$ and the Chern connection induces a unitary connection on $K^{-1}$, which we call by ${A^{\text{Ch}}}$.\par 
To define a spin connection on $S=S^{\mathrm{can}}\otimes\mathcal{L}=S_+\oplus S_-$, we need a unitary connection $A$ on $L(S)=K^{-1}\otimes\mathcal{L}^2$. We can write any such unitary connection as $A=A^{\text{Ch}}+2B$, $B$ being a connection on $\mathcal L$. Gauduchon \cite{Paul} proved that on an almost K\"ahler manifold $(M,g,J)$ (i.e., $d\omega=0$), the formula for the Dirac operator is the following.
\begin{align}
    D_A=\sqrt{2}(\bar\partial_B+\bar\partial^*_B)
\end{align}
In dimension $4,$ Taubes \cite{Taubes1} obtained the same result by indirect arguments and it is a key fact in his \textit{magnum opus}: ``SW$=$Gr''.\par 
For any unitary connection $A$ on $L(S)$ and a spinor $\psi\in \Gamma(S)$, the gauge group $\mathcal{G}=$ Map$(M,S^1)$ acts on the pair $(A,\psi)$ by pull-back. Hence for $g\in\mathcal{G},$
\begin{align*}
    g\cdot (A,\psi)=(A-2g^{-1}dg, g\psi)
\end{align*}
Moreover, $(A,\psi)$ solves the Dirac equation $D_A\psi=0$ iff $g\cdot (A,\psi)$ solves it.\par
In particular when $\mathcal L$ is the trivial line bundle, i.e., we work with the canonical spin$^{\mathbb C}$ structure, we have
\begin{align}\label{Dirac1}
    D_{A^\text{Ch}}= \sqrt{2}(\bar\partial+\bar\partial^*)
\end{align}
$\bar\partial^*$ is the Hermitian adjoint of $\bar\partial$.\par 
The discussion above along with the identity \eqref{Dirac1} gives us the following lemma.
\begin{lemma}\label{lemma 100}
    Up to gauge equivalence there exists a canonical connection $A_0$ and a non-trivial spinor $\varphi_0\in\Omega^0 \subset \Gamma(S^{\mathrm{can}}_+)$, such that $D_{A_0}\varphi_0=0$. $\varphi_0$ can be taken to have pointwise norm one.
\end{lemma}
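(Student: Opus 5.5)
The plan is to take $A_0=A^{\mathrm{Ch}}$, the Chern connection on $K^{-1}=L(S^{\mathrm{can}})$, and $\varphi_0=\psi_0$, the constant function $1$ in the summand $\Lambda^{0,0}=M\times\mathbb C\subset S^{\mathrm{can}}_+$. The argument uses only the identity \eqref{Dirac1}, $D_{A^{\mathrm{Ch}}}=\sqrt2(\bar\partial+\bar\partial^*)$, which holds on an almost K\"ahler manifold (this is the standing assumption of the subsection, needed for Gauduchon's formula). By grading, for a function $f\in\Omega^0\subset\Gamma(S_+^{\mathrm{can}})$ the adjoint term $\bar\partial^*f$ vanishes, since $\bar\partial^*$ lowers the $(0,q)$-degree and there is nothing below degree $0$. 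Hence $D_{A^{\mathrm{Ch}}}f=\sqrt2\,\bar\partial f\in\Omega^{0,1}\subset\Gamma(S_-^{\mathrm{can}})$. For $f=\psi_0\equiv1$ we have $\bar\partial 1=(d1)^{0,1}=0$, so $D_{A^{\mathrm{Ch}}}\psi_0=0$. Moreover $|\psi_0|=1$ pointwise, because the Hermitian metric on the trivial summand $\Lambda^{0,0}$ is the standard one on $\mathbb C$.

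Next comes the "up to gauge" statement, which I would justify using the gauge action recorded above, $g\cdot(A,\psi)=(A-2g^{-1}dg,\,g\psi)$, which preserves solutions of the Dirac equation. Every pair in the orbit of $(A^{\mathrm{Ch}},\psi_0)$ is again a solution, namely $(A^{\mathrm{Ch}}-2g^{-1}dg,\,g)$ with $|g|=1$, so the unit norm of the spinor is preserved as well. Canonicity then means the following: among pairs with $A=A^{\mathrm{Ch}}+2B$ and $\varphi\in\Omega^0$ of unit norm solving $D_A\varphi=0$, the solution is unique modulo gauge. To see this, first use a gauge transformation to make $\varphi\equiv1$ (take $g=\varphi^{-1}$, which is unitary). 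The Dirac equation then reads $\sqrt2(\bar\partial_B 1)=\sqrt2\,B^{0,1}=0$. Since $B$ is $i\mathbb R$-valued, $B^{0,1}=0$ forces $B=0$, i.e.\ $A=A^{\mathrm{Ch}}$.

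The only point needing care, and the main potential obstacle, is the formula for $D_{A^{\mathrm{Ch}}}$ on the degree-zero component. This requires that the Clifford/spin connection induced by $A^{\mathrm{Ch}}$ acts on $\Lambda^{0,0}$ trivially (so that $\nabla 1=0$ on $\Lambda^{0,0}$), which is exactly what Gauduchon's identity \eqref{Dirac1} encodes under the hypothesis $d\omega=0$. I will take \eqref{Dirac1} as given from the paper rather than re-derive it; everything else is the elementary degree and gauge bookkeeping above.
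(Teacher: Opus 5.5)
Your argument is correct and is essentially the paper's proof: take $(A_0,\varphi_0)=(A^{\mathrm{Ch}},\psi_0)$ and use $D_{A^{\mathrm{Ch}}}=\sqrt2(\bar\partial+\bar\partial^*)$ with $\bar\partial\psi_0=(d\psi_0)^{0,1}=0$. Your uniqueness-modulo-gauge addendum (gauge $\varphi$ to $1$, then $B^{0,1}=0$ forces $B=0$) is a correct extra that the paper does not state.

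One small caution concerns your closing remark. In the non-K\"ahler almost K\"ahler case $\nabla_{A^{\mathrm{Ch}}}\psi_0\neq0$: only its $\Lambda^{0,0}$-component vanishes, and the rest lies in $\Lambda^1\otimes\Lambda^{0,2}$. It is $d\omega=0$ that makes this remainder invisible to the Dirac operator, and that is what \eqref{Dirac1} packages.
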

\begin{proof}
    $A_0=A^{\text{Ch}},\varphi_0=\psi_0$ solves the Dirac equation $D_{A_0}\varphi_0=0$ because $d\psi_0=0$ and therefore $\bar\partial\psi_0=0$.
\end{proof}
In spinorial terms the Chern connection is characterised as follows (see \S\S 1.4.3. in \cite{Nicol} for a detailed exposition). For the constant section $\psi_0,$ $A^{\text{Ch}}$ is the unique connection on $K^{-1}$ such that the induced connection on $S,\nabla_{A^\text{Ch}}$ satisfies 
\begin{align*}
    \nabla_{A^\text{Ch}}\, \psi_0\in\Gamma(\Lambda^1\otimes \Lambda^{0,2})
\end{align*}
In fact just using this condition above one can prove that $D_{A^{\text{Ch}}}\,\psi_0=0$. To see this first notice that $\Omega^0$ is a $-mi$ eigenspace of the Clifford action of the symplectic form : cl$(\omega)$ (see lemma \ref{lemma 1} for a proof) and $\Omega^{0,2}$ is a $ci$ eigenspace of cl$(\omega)$, where $c\neq -m$. For example, $c=2$ when $m=2$, $c=1$ when $m=3$, and $c=0$ when $m=4$ \cite{Partha}. Now apply $D_{A^{\text{Ch}}}$ on cl$(\omega)\psi_0$ to get 
\begin{align}\label{eq 1}
    D_{A^{\text{Ch}}}\big(\text{cl}(\omega)\psi_0\big)= \text{cl}\big(d\omega+d^*\omega\big)\psi_0+\text{cl}\big((1\otimes \text{cl}(\omega))\,\nabla_{A^{\text{Ch}}} \psi_0\big)
\end{align}
The rightmost term here means the following: start with $\nabla_{A^{\text{Ch}}} \psi_0\in\Gamma(\Lambda^1\otimes\Lambda^{0,2})$, first apply $1\otimes \text{cl}(\omega)$ to get another element of $\Gamma(\Lambda^1\otimes\Lambda^{0,2})$ and then apply cl to get an element of $\Gamma(S_-)$. Because $\omega$ is closed and co-closed, equation \eqref{eq 1} becomes
\begin{align*}
    -mi D_{A^{\text{Ch}}}(\psi_0)=ci D_{A^{\text{Ch}}}(\psi_0)
\end{align*}
Since $c\neq -m$, this proves $D_{A^{\text{Ch}}}\,\psi_0=0$.\par
$\nabla_{A^{\text{Ch}}}\,\psi_0=0$ iff $(M,g,J)$ is K\"ahler, instead if it is almost K\"ahler, we still have $D_{A^{\text{Ch}}}\,\psi_0=0$.
\subsection{Clifford multiplication in almost complex geometry}
Let $\alpha\in\Omega^1\otimes\mathbb{C}$ be a complexified one-form and $\xi\in\Omega^{0,k}$ be a spinor. Clifford action of $\alpha=(\alpha^{0,1}+\alpha^{1,0})$ on $\xi$ is given by the following formula:
\begin{align}\label{Clifford one form}
c(\alpha)\xi=\sqrt{2}(\alpha^{0,1}\wedge\xi-\overline{\alpha^{1,0}}\righthalfcup\xi) 
\end{align}
$\overline{\alpha^{1,0}}\righthalfcup \xi\in\Omega^{0,k-1}$ is the contraction of $\alpha$ and $\xi$ defined by $\overline{\alpha^{1,0}}\righthalfcup \xi:=\iota_{(\overline{\alpha^{1,0}})^{\#}}(\xi)$, where $(\overline{\alpha^{1,0}})^{\#}$ is the vector metric dual to $\overline{\alpha^{1,0}}$. At a point $p\in X,$ if $\xi=e_1\wedge\cdots\wedge e_k$
\begin{align*}
     \overline{\alpha^{1,0}}\righthalfcup(e_1\wedge\cdots\wedge e_k):=\sum_{i=1}^k (-1)^{i-1}\langle e_i,\overline{\alpha^{1,0}}\rangle e_1\wedge\cdots\wedge\cdots\wedge\hat{e_i}\wedge\cdots\wedge e_k
\end{align*}
\begin{remark}\label{remark Clifford action}
    Notice $c(\alpha)$ takes positive spinors: $\oplus_p\Omega^{0,2p}$ to negative spinors: $\oplus_p\Omega^{0,2p+1}$ and vice versa. Moreover it says that for a $(p,q)$ form $\kappa, c(\kappa)$ takes a spinor in $\Omega^{0,k}$ to $\Omega^{0,k+q-p}.$ Hence the action is trivial if $k+q-p<0$ or $k+q-p>m$. In particular the Clifford action of forms in $\Omega^{k,0}$ on spinors in $\Omega^0\subset \Gamma(S_+)$ is trivial for $k>0$.
\end{remark}
\begin{lemma}\label{lemma 1}
    $\Omega^0$ is a $-mi$ eigenspace of the Clifford action of the symplectic form : $\mathrm{cl}(\omega)$.
\end{lemma}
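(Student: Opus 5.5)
The plan is to compute $\mathrm{cl}(\omega)\psi_0$ pointwise, using the explicit formula \eqref{Clifford one form} for Clifford multiplication by one-forms. Since $\omega$ and the Clifford map are both tensorial, it suffices to work at a single point $p$. There we choose a unitary coframe $\theta^1,\dots,\theta^m$ of $\Lambda^{1,0}$, normalised so that the real orthonormal coframe is $e^{2j-1},e^{2j}$ with $\theta^j=(e^{2j-1}+ie^{2j})/\sqrt2$ (up to the sign conventions of \cite{Friedrich}). In such a frame $\omega=\sum_j e^{2j-1}\wedge e^{2j}$, or equivalently $\omega=i\sum_j\theta^j\wedge\bar\theta^j$. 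For a two-form given as a wedge of orthogonal one-forms, Clifford multiplication is the product $\mathrm{cl}(e^{2j-1}\wedge e^{2j})=\mathrm{cl}(e^{2j-1})\mathrm{cl}(e^{2j})$. Hence
\begin{align*}
\mathrm{cl}(\omega)=\sum_{j=1}^m \mathrm{cl}(e^{2j-1})\,\mathrm{cl}(e^{2j}).
\end{align*}

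Next I apply each summand to $\psi_0=1\in\Omega^0$ using \eqref{Clifford one form}. Writing $e^{2j-1}=(\theta^j+\bar\theta^j)/\sqrt2$ and $e^{2j}=(\theta^j-\bar\theta^j)/(\sqrt2 i)$, the $(0,1)$-parts are $\bar\theta^j/\sqrt2$ and $-\bar\theta^j/(\sqrt2 i)=i\bar\theta^j/\sqrt2$ respectively. The conjugates of the $(1,0)$-parts are $\bar\theta^j/\sqrt2$ and $-i\bar\theta^j/\sqrt2$.

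First, $\mathrm{cl}(e^{2j})\cdot 1=\sqrt2\cdot i\bar\theta^j/\sqrt2=i\bar\theta^j$, because contraction into a function vanishes.

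Then $\mathrm{cl}(e^{2j-1})(i\bar\theta^j)=\sqrt2\big(\tfrac{1}{\sqrt2}\bar\theta^j\wedge i\bar\theta^j-\tfrac{1}{\sqrt2}\bar\theta^j\righthalfcup i\bar\theta^j\big)$. The wedge term vanishes and $\bar\theta^j\righthalfcup\bar\theta^j=|\bar\theta^j|^2=1$ with the Hermitian pairing, so this gives $-i$.

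Summing over $j$ yields $\mathrm{cl}(\omega)\psi_0=-mi\,\psi_0$. Since every element of $\Omega^0$ is a function multiple of $\psi_0$ and $\mathrm{cl}(\omega)$ is $C^\infty$-linear, the whole of $\Omega^0$ is the $-mi$ eigenspace.

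The main obstacle is not conceptual but a matter of conventions. The overall sign of the eigenvalue depends on the orientation and normalisation of $\omega(u,v)=g(Ju,v)$ relative to the frame, on whether the contraction uses a complex-linear or Hermitian pairing, and on the convention of \cite{Friedrich} that one-forms act skew-adjointly with $\mathrm{cl}(e)^2=-1$. I would fix these by two consistency checks:
\begin{itemize}
\item $\mathrm{cl}(e^{2j-1})^2=-1$ must hold on $\Omega^0\oplus\Omega^{0,1}$ under \eqref{Clifford one form};
\item $J e_{2j-1}=e_{2j}$ must correspond to $\theta^j$ being of type $(1,0)$, so that $\omega=\sum e^{2j-1}\wedge e^{2j}$.
\end{itemize}
With these fixed, the two-dimensional computation above gives $-i$ per complex direction.

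As a further check, for $m=2$ this should match the four-dimensional statement $\Pi(rq(\psi_0))=\frac{ir}{4}\omega$ and the value $c=2$ on $\Omega^{0,2}$ quoted in the text. The trace of $\mathrm{cl}(\omega)$ on $S_+$ is zero, and $-2i\cdot1+2i\cdot1=0$ is consistent with this.
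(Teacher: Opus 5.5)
Your proposal is correct and follows essentially the same route as the paper: work at a point in an adapted frame with $\omega=\sum_j e^{2j-1}\wedge e^{2j}$, factor $\mathrm{cl}(e^{2j-1}\wedge e^{2j})=\mathrm{cl}(e^{2j-1})\,\mathrm{cl}(e^{2j})$, and apply \eqref{Clifford one form} twice to get $-i$ per complex direction (the paper uses $dz_j=dx_j+i\,dy_j$, of norm $\sqrt2$, where you use the unit coframe $\theta^j$). One harmless slip: since $e^{2j}$ is real, the conjugate of its $(1,0)$-part equals its $(0,1)$-part $i\bar\theta^j/\sqrt2$, not $-i\bar\theta^j/\sqrt2$, but this term never enters because the contraction into a function vanishes.
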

\begin{proof} Choose local coordinates $x_1,y_1=Jx_1,\dots, x_m,y_m=Jx_m$ at a point $x\in M$ such that the symplectic form $\omega$ at $x$ reads
\begin{align*}
    \omega=\sum_{j=1}^m dx_j\wedge dy_j
\end{align*} For a spinor $\phi^0\in\Omega^0$ we get
    \begin{align*}
        \mathrm{cl}(dx_j\wedge dy_j)\phi^0&=\mathrm{cl}(dx_j)\circ \mathrm{cl}(dy_j)\phi^0\\
        &=\mathrm{cl}(dx_j)(\frac{i}{\sqrt{2}}d\bar z_j\wedge\phi^0)\hspace{1.5 ex}[dz_j=dx_j+idy_j]\\
        &=-i\phi^0\\
    \text{Hence, }\mathrm{cl}\big(\sum_{j=1}^m dx_j\wedge dy_j\big)\phi^0&=-mi\phi^0\tag*{\qedhere}
    \end{align*}
\end{proof}
\begin{lemma}\label{lemma 2}
    For any $\beta^{0,k}\in\Omega^{0,k}$ and $\phi^0\in\Omega^0$,
    \begin{align*}
        \mathrm{cl}(\beta^{0,k})\phi^0=(\sqrt{2})^k\phi^0\beta^{0,k}
    \end{align*}
\end{lemma}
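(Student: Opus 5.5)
The plan is to reduce to decomposable forms at a single point and use the one-form Clifford formula \eqref{Clifford one form} repeatedly. Both sides are $\mathbb{C}$-linear in $\beta^{0,k}$ and $C^\infty$-linear in $\phi^0$, and the statement is pointwise. It therefore suffices to fix $x\in M$ and a unitary frame. We choose $dz_j=dx_j+idy_j$ as in the proof of Lemma \ref{lemma 1}, normalised as there so that $\mathrm{cl}(d\bar z_j)$ is computed by \eqref{Clifford one form}. We then check the identity on a basis of $\Lambda^{0,k}_x$, namely $\beta^{0,k}=d\bar z_{i_1}\wedge\cdots\wedge d\bar z_{i_k}$ with $i_1<\cdots<i_k$. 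Since the $d\bar z_{i_j}$ are orthogonal with respect to the Hermitian metric (they pair trivially under the complex bilinear extension of $g$), Clifford multiplication of the wedge equals the Clifford product $\mathrm{cl}(d\bar z_{i_1})\circ\cdots\circ\mathrm{cl}(d\bar z_{i_k})$. This is the usual fact that for mutually orthogonal one-forms the wedge and Clifford products agree. Under the conventions of \cite{Friedrich}, the map $\mathrm{cl}:\Lambda^*\to\mathrm{End}(S)$ is obtained via the symbol identification, and this identification is compatible with that fact.

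Next we apply \eqref{Clifford one form} to each factor, working from the right. For $\alpha=d\bar z_j$ we have $\alpha^{1,0}=0$ and $\alpha^{0,1}=d\bar z_j$, so the contraction term drops out and $\mathrm{cl}(d\bar z_j)\xi=\sqrt2\,d\bar z_j\wedge\xi$ for every spinor $\xi$. Starting from $\phi^0\in\Omega^0$ and iterating $k$ times gives
\begin{align*}
\mathrm{cl}(d\bar z_{i_1})\cdots\mathrm{cl}(d\bar z_{i_k})\phi^0=(\sqrt2)^k\,d\bar z_{i_1}\wedge\cdots\wedge d\bar z_{i_k}\,\phi^0=(\sqrt2)^k\phi^0\beta^{0,k}.
\end{align*}
The ordering is consistent, because applying the rightmost factor first yields $d\bar z_{i_1}\wedge(\cdots\wedge(d\bar z_{i_k}\phi^0))$. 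Linearity then extends the identity to all $\beta^{0,k}$.

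The main obstacle is the identification of $\mathrm{cl}$ on higher-degree forms with iterated one-form Clifford products. We need to confirm that for orthogonal one-forms no lower-order correction terms or sign conventions intervene, and that complexification is $\mathbb{C}$-linear rather than conjugate-linear. We will verify this directly from the real formula $\mathrm{cl}(e_{a}\wedge e_b)=\mathrm{cl}(e_a)\mathrm{cl}(e_b)$ for $a\neq b$, expanding $d\bar z_{i}\wedge d\bar z_{j}$ in the real basis. The cross terms $dx_i\,dy_j$ and $dy_i\,dx_j$ with $i\ne j$ are harmless. For $i=j$ the wedge vanishes, and correspondingly $\mathrm{cl}(d\bar z_i)^2=0$, which is consistent with the Clifford relation because $d\bar z_i$ is null for the bilinear form. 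As a sanity check, the $k=1$ case reproduces the step $\mathrm{cl}(dy_j)\phi^0=\tfrac{i}{\sqrt2}d\bar z_j\wedge\phi^0$ used in the proof of Lemma \ref{lemma 1}.
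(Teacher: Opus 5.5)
Your proposal is correct and follows essentially the same route as the paper. Both reduce to the basis forms $d\bar z_{j_1}\wedge\cdots\wedge d\bar z_{j_k}$ at a point and turn the wedge into the Clifford product $\mathrm{cl}(d\bar z_{j_1})\circ\cdots\circ\mathrm{cl}(d\bar z_{j_k})$: the paper does this by citing the identity that writes $\mathrm{cl}(v\wedge w)$ as $\mathrm{cl}(v)\circ\mathrm{cl}(w)$ plus a contraction term, and you do it by observing that the $d\bar z_j$ are isotropic for the bilinear extension of $g$, which is precisely why that contraction term vanishes. Both then iterate the one-form formula, under which $\mathrm{cl}(d\bar z_j)$ acts as $\sqrt2\,d\bar z_j\wedge$, and extend by linearity.
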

\begin{proof}
    Choose local coordinates $x_1,y_1=Jx_1,\dots, x_m,y_m=Jx_m$ at a point $x\in M$. Define $dz_j=dx_j+idy_j$. Using the identity
    \begin{align*}
        \mathrm{cl}(v\wedge w^k)=\mathrm{cl}(v)\circ \mathrm{cl}(w^k)+\mathrm{cl}(v\righthalfcup w^k)\hspace{2 ex} \text{for }v\in\Omega^1,w\in\Omega^k
    \end{align*} and formula \ref{Clifford one form} for Clifford multiplication, we deduce that for any pairwise distinct $j_1,j_2,\cdots,j_k$, we have
    \begin{align*}
        \mathrm{cl}(d\bar z_{j_1}\wedge d\bar z_{j_2}\wedge \cdots\wedge d\bar z_{j_k})\phi^0&=\mathrm{cl}(d\bar z_{j_1})\circ \cdots \circ \mathrm{cl}(d\bar z_{j_k})\phi^0\\&=(\sqrt{2})^k\phi^0\, d\bar z_{j_1}\wedge d\bar z_{j_2}\wedge \cdots\wedge d\bar z_{j_k}
    \end{align*}
    Since Clifford multiplication is linear in $\beta^{0,k}$, the lemma follows.
\end{proof}
\subsection{Ansatz on symplectic and K\"ahler manifolds}\label{ansatz}\label{ansatz}
\subsubsection*{Canonical solution on almost K\"ahler (symplectic) manifolds}\label{symplectic}
\begin{proof}[Proof of theorem \ref{theorem1 symplectic}]
    We define the canonical solution as 
    \begin{align}\label{canonical}
        A=A^{\mathrm{Ch}},\,\beta=0,\,\phi=\sqrt{r}\psi_0
    \end{align}
Lemma \ref{lemma 100} tells us that the canonical solution solves both the Dirac and the curvature equations \eqref{Dirac}, \eqref{Curvature}.
\end{proof}
\begin{proof}[Proof of theorem \ref{transversal}]
The deformation problem for the canonical spin$^\mathbb C$ structure has Fredholm index zero, therefore proving injectivity for the linearisation will then imply surjectivity and hence transversality. The proof given here is inspired by the proof of transversality of the canonical solution in dimension $4$ given in Nicolaescu's book \cite{Nicol}. The infinitesimal variables in the linearised equations with appropriate gauge fixing lie in
\begin{align*}
    \Gamma(S_+^{\mathrm{can}})\bigoplus i\Omega^1\bigoplus_{p>1,p\text{ odd}} \mathrm{Re}(\Omega^{0,p})
\end{align*} 
The trick is to identify 
$$i\Omega^1\bigoplus_{p>1,p\text{ odd}} \mathrm{Re}(\Omega^{0,p})$$ with $$\Gamma(S_-^{\mathrm{can}})$$ and write both the linearised Dirac and the curvature equation (with gauge fixing) in terms of the Dirac operators. The difficult part of the proof involves this careful identification. The rest is quite straightforward. We give a sketch of the proof below.\par
Take $\lambda=\sqrt{r}$. First we show that the linearisation at the canonical solution gives us two Dirac equations with variables $\sigma\in\Gamma(S_+^{\mathrm{can}}),\eta\in\Gamma(S_-^{\mathrm{can}})$.
\begin{align}
    D_{A^{\mathrm{Ch}}}\sigma+\lambda G\eta=0\label{lin 1}\\
    (D_{A^{\mathrm{Ch}}}+R)\eta-\lambda H\sigma=0\label{lin 2}
\end{align}
where $R,G$ and $H$ are zeroth order real linear maps; moreover $G$ and $H$ are invertible. Substituting $\eta=-\frac{1}{\lambda} G^{-1}D_{A^{\mathrm{Ch}}}\sigma$ into equation \eqref{lin 2} we get
\begin{align}
    \big(H^{-1}(D_{A^{\mathrm{Ch}}}+R)G^{-1}D_{A^{\mathrm{Ch}}}\big)\sigma+\lambda^2\sigma=0\label{lin 3}
\end{align}
Next we will show that the operator $\mathcal A:=H^{-1}(D_{A^{\mathrm{Ch}}}+R)G^{-1}D_{A^{\mathrm{Ch}}}$ is strongly elliptic. Here we think of $S_+^{\mathrm{can}}$ as a real vector bundle equipped with the inner product
\begin{align*}
    \langle\varphi_1,\varphi_2\rangle_{\mathbb R}=\mathrm{Re}\langle \varphi_1,\varphi_2\rangle
\end{align*}
Strong ellipticity means that there exists a constant $\kappa>0$ (independent of $v^*$ and $\zeta$) such that
\begin{align}
     \mathrm{Re}\langle\mathrm{Sym}(\mathcal A)(v^*)\zeta,\zeta\rangle\geq \kappa |v^*|^2|\zeta|^2
\end{align}
for every $x\in M,$ every $v^*\in T^*_xM$ and every $\zeta\in (S_+^{\mathrm{can}})_x$. we write $
\mathrm{Sym}(\mathcal A)$ for the principal symbol of a differential operator $\mathcal A$. With this we use the G\aa rding's inequality, i.e., there exists positive constants $c$ and $C$ depending only on the geometry of $M$ such that 
\begin{align}
    \mathrm{Re}\langle\mathcal A\sigma,\sigma\rangle_{L^2}\geq c\|\sigma\|^2_{H^1}-C\|\sigma\|^2_{L^2}\label{Garding}
\end{align}
Now taking inner product with $\sigma$ on both sides of the equation \eqref{lin 3} and integrating over $M$ gives us
\begin{align*}
    0=\mathrm{Re}\langle\mathcal A\sigma,\sigma\rangle_{L^2}+\lambda^2\|\sigma\|^2_{L^2}
\end{align*}
With the inequality \eqref{Garding} this leads us to
\begin{align*}
    0\geq c\|\sigma\|^2_{H^1}+(\lambda^2-C)\|\sigma\|^2_{L^2}\
\end{align*}
Hence for large enough $\lambda^2> C, \sigma=0$. Equation \eqref{lin 1} then tells us $G\eta=0$ but since $G$ is invertible this tells us $\eta=0$. Together $\sigma=0,\eta=0$ will tell us that the linearisation has trivial kernel.\par 
The proof is quite long; we break it into three parts. The first part discusses the linearisation of the Dirac equation at the canonical solution deriving equation \eqref{lin 1}, the second part does the same for the curvature equation deriving equation \eqref{lin 2} and the last part shows that the operator $\mathcal A$ is strongly elliptic finishing the proof.\par 
\medskip
\noindent\textit{Part 1.}
Let $
(2ia,b,\sigma)
$
be an infinitesimal deformation of the canonical solution, where
\begin{align*}
a\in\Omega^1(M,\mathbb R),\qquad
b\in\bigoplus_{q\geq3,\ q\ {\rm odd}}\text{Re}(\Omega^{0,q}),
\qquad
\sigma\in\Gamma(S_+^{\text{can}})
\end{align*}
When $m$ is even the largest possible odd degree in $b$ is $m-1$;
when $m$ is odd the top degree is $m$. The linearisation of the Dirac equation \eqref{Dirac} is
\begin{equation}
D_{A^{\mathrm{Ch}}}\sigma
+
\lambda\,\mathrm{cl}(ia+\tilde b)\psi_0
=0
\label{eq:linD}
\end{equation}
Next we would like to rewrite equation \eqref{eq:linD} in a certain form. Before we do that, we need some preparation. For the real one-form $a$, define $
\alpha_1\in\Omega^{0,1}
$
by
\begin{equation}
a
=
\frac1{\sqrt2}
(\alpha_1+\overline{\alpha_1})
\label{eq:def-alpha1}
\end{equation}
Thus
$
a^{0,1}
=
\frac1{\sqrt2}\alpha_1.
$
Similarly for every odd $q\geq3$, define
$
\alpha_q\in\Omega^{0,q}
$
by
\begin{equation}
b_q
=
\frac1{(\sqrt2)^q}
\left(
\alpha_q+\overline{\alpha_q}
\right)
\label{eq:def-alphaq}
\end{equation}
We now calculate the Hodge-dual contribution using lemma~4.5 in \cite{Fine}.  In real dimension $2m$,
for $\gamma\in\Omega^k$,
\begin{equation}
\mathrm{cl}(*\gamma)
=
(-1)^{m+\frac{k(k+1)}{2}}i^m \mathrm{cl}(\gamma)\qquad\text{on } S_+
\label{eq:FG-star}
\end{equation}
Take $
q=2k+1.$
Then
\begin{align*}
\frac{q(q+1)}2
=
(2k+1)(k+1)
\equiv k+1\pmod2
\end{align*}
So equation \eqref{eq:FG-star} and lemma \ref{lemma 2} together give us
\begin{equation}
\mathrm{cl}(*b_q)\psi_0
=
(-1)^{m+k+1}i^m\alpha_q
\label{eq:star-bq}
\end{equation}
\begin{comment}
Recall
\begin{align*}
s_j=
\begin{cases}
1,&j\equiv0,3\pmod4,\\
i,&j\equiv1,2\pmod4.
\end{cases}
\end{align*}
In particular,
\begin{equation}
s_j^2=(-1)^{j(j+1)/2}.
\label{eq:s-square}
\end{equation}
\end{comment}
If $3\leq q<m$, the $q$-component of $\widetilde b$ is $
s_qb_q+s_{2m-q}*b_q.$ Again using lemma \ref{lemma 2} and equation \eqref{eq:star-bq} we get
\begin{equation}
c(\widetilde b_q)\psi_0
=
A_q\alpha_q,\,\,\text{where } A_q
=
s_q
+
(-1)^{m+k+1}i^m s_{2m-q}
\label{eq:Aq-action}
\end{equation}
When $m$ is odd and $q=m$, only the Hodge-dual term occurs in
$\widetilde b$.  In this case
\begin{equation}
A_m
=
(-1)^{m+\frac{m(m+1)}{2}}i^m s_m
\label{eq:Am-def}
\end{equation}
And finally we set
\begin{align}
A_1=i
\end{align} to have 
\begin{equation}
\mathrm{cl}(ia+\widetilde b)\psi_0
=
\sum_{\substack{1\leq q\leq m\\q\ {\rm odd}}}
A_q\alpha_q
\label{eq:odd-clifford-sum}
\end{equation}
For odd $q=2k+1$, define
\begin{equation}
\delta_q
=
\begin{cases}
(-1)^{\frac{m}{2}+k+1}
&\text{if }m \text{ is even}\\[1mm]
(-1)^{\frac{m+1}{2}}
&\text{if } m \text{ is odd}
\end{cases}
\label{eq:delta-def}
\end{equation}
With this notation, for $q<m$ the part of $F_b$ arising from $b_q$
is
\begin{equation}
2s_{q+1}\,db_q
+
2\delta_qs_{q-1}\,d^*b_q
\label{eq:Fbq}
\end{equation}
When $m$ is odd and $q=m$, only the second term occurs. Next we define real non-zero constants $r_q$ recursively by
\begin{equation}
r_1=1,
\qquad
r_q
=
\frac{\delta_q}{2}r_{q-2},
\qquad q\geq3,\quad q\ {\rm odd}
\label{eq:r-recursion}
\end{equation}
Next set 
\begin{equation}
\eta_q=r_q\alpha_q,
\qquad
\eta=\sum_{q\ {\rm odd}}\eta_q\in\Gamma(S_-^{\mathrm{can}})
\label{eq:eta-def}
\end{equation}
Equation \eqref{eq:linD} now reads
\begin{equation}
D_{A^{\mathrm{Ch}}}\sigma+\lambda G\eta=0,
\label{eq:first-system}
\end{equation}
where $G:S_-^{\mathrm{can}}\to S_-^{\mathrm{can}}$ preserves every degree and
\begin{equation}
G|_{\Omega^{0,q}}
=
G_q\,\text{Id},
\qquad
G_q=\frac{A_q}{r_q}
\label{eq:Gq-def}
\end{equation}
\begin{comment}
We shall prove below that every $A_q$, and hence every $G_q$, is
non-zero.
\end{comment}
The point of writing the linearised Dirac equation in this form is that in the linearised curvature equation, $\eta$ appears naturally. Therefore, it makes sense to keep it in the linearised Dirac equation as well to make our lives easier. Next we focus on the linearisation of the curvature equation.\par 
\medskip
\noindent\textit{Part 2.} The linearisation of the curvature equation \eqref{Curvature} at the canonical solution reads
\begin{align}
    \Pi(2ida+F_b)=\Pi(d_{\lambda\psi_0}q(\sigma))=\lambda \Pi(d_{\psi_0}q(\sigma))\label{linc}
\end{align}
The gauge group $
\mathcal G=C^\infty(M,S^1)$ acts on the parameter space by 
\begin{align*}
g\cdot(A,\beta,\phi)
=
\left(
A-2g^{-1}dg,\,
\beta,\,
g\phi
\right)
\end{align*}
Taking a curve in $\mathcal G, g_t=e^{itf}$, we can see that the infinitesimal gauge action at the canonical solution is given by
\begin{align*}
   i\mathrm{Re}(\Omega^0)&\longrightarrow i\mathrm{Re}(\Omega^1)\oplus \bigoplus_{q\geq3,\ q\ {\rm odd}}
\mathrm{Re}(\Omega^{0,q})\oplus \Gamma(S_+^{\mathrm{can}})\\ if&\longmapsto
\left(
-2i\,df,\,
0,\,
i\lambda f\psi_0
\right) 
\end{align*}
We complement the linearisation of the curvature equation with the appropriate gauge-fixing equation, i.e., the $L^2$ adjoint of the infinitesimal gauge action at the canonical solution (the calculation required to determine the $L^2$ adjoint is available at \S\S 2.2.2. in \cite{Nicol}):
\begin{equation}
-2id^*a-
i\lambda\mathrm{Im}\langle \psi_0,\sigma\rangle=0
\label{eq:gauge}
\end{equation}
Similar to part 1, we would like to write the equations \eqref{linc} and \eqref{eq:gauge} together in a specific form. We need some preparation to do that. Define 
\begin{align*}
{\mathcal W}
=
i\mathrm{Re}(\Omega^0)
\oplus
i\mathrm{Re}(\Omega^0)\,\omega
\oplus
\bigoplus_{\substack{2\leq p\leq m\\p\ {\rm even}}}
s_p\mathrm{Re}(\Omega^{0,p})
\end{align*}
The first summand is the target of the gauge fixing equation \eqref{eq:gauge}. Define a map
\begin{align*}
\Phi_0:
i\mathrm{Re}(\Omega^0)
\oplus
i\mathrm{Re}(\Omega^0)\,\omega
\longrightarrow
\Omega^{0,0},\,\,\,
\Phi_0(if,ih\omega)
=
\frac12f-\frac{im}{2}h\label{eq:Phi0}
\end{align*}
For every even $p>0$, an element of
$s_p\mathrm{Re}(\Omega^{0,p})$ can be written uniquely in the form
\begin{equation}
\theta_p
=
\frac{s_p}{(\sqrt2)^p}
(\tau_p+\overline{\tau_p}),
\qquad
\tau_p\in\Omega^{0,p}
\label{eq:theta-coordinate}
\end{equation}
Put
\begin{equation}
t_p=\frac{r_{p-1}}2
\label{eq:tp-def}
\end{equation}
and define
\begin{equation}
\Phi_p(\theta_p)=t_p\tau_p
\label{eq:Phip-def}
\end{equation}
Together, these maps define a real bundle isomorphism
\begin{equation}
\Phi:
{\mathcal W}
\longrightarrow S_+^{\mathrm{can}}
\label{eq:Phi-def}
\end{equation}
Define
\begin{equation}
P(a,b)
=
\left(
2id^*a,\,
\Pi(2i\,da+F_b)
\right)
\label{eq:P-def}
\end{equation}
Notice $P$ is one part of the linearised curvature equation with appropriate gauge fixing. We claim that
\begin{equation}
\Phi P(a,b)
=
D_{A^{\mathrm{Ch}}}\eta+R\eta
\label{eq:P-dirac}
\end{equation}
for a zeroth-order real bundle map
\begin{align*}
R:S_-^{\mathrm{can}}\longrightarrow S_+^{\mathrm{can}}
\end{align*}
Next we verify this explicitly. Fix an odd $q\geq3$. Recall from 
\eqref{eq:def-alphaq},
$
b_q
=
2^{-\frac{q}{2}}
(\alpha_q+\overline{\alpha_q}).
$ The $(0,q+1)$ principal part of $2s_{q+1}db_q$ is $
2s_{q+1}2^{-\frac{q}{2}}\bar\partial\alpha_q.$ We calculate
\begin{align}
\Phi_{q+1}(2s_{q+1}db_q)
&=
\frac{2\sqrt2}{(\sqrt 2)^{q+1}}\,s_{q+1}\Phi_{q+1}(\bar\partial\alpha_q)
\nonumber\\
&=
\sqrt2\,r_q\bar\partial\alpha_q
\nonumber\\
&=
\sqrt2\,\bar\partial\eta_q\qquad[\text{definition \eqref{eq:eta-def} is used here}]
\label{eq:d-up}
\end{align}
Similarly, one calculates
\begin{align}
\Phi_{q-1}
(2\delta_qs_{q-1}d^*b_q)
&=
\frac{\sqrt2}{(\sqrt{2})^{q-1}}\,\delta_q s_{q-1}
\Phi_{q-1}(\bar\partial^*\alpha_q)
\nonumber\\
&=
\frac{\sqrt2}{2}\,\delta_q r_{q-2}\bar\partial^*\alpha_q
\nonumber\\
&=
\sqrt2\,r_q\bar\partial^*\alpha_q\qquad[\text{definition \eqref{eq:r-recursion} is used here}]\nonumber\\
&=
\sqrt2\,\bar\partial^*\eta_q\qquad[\text{definition \eqref{eq:eta-def} is used here}]
\label{eq:d-down}
\end{align}
Finally we calculate the $q=1$ case. 
\begin{comment}
write
\begin{align*}
u=\bar\partial^*\alpha_1.
\end{align*}
At the level of principal symbols,
\begin{equation}
d^*a=\sqrt2\,\mathrm{Re} u,
\qquad
\Lambda da=-\sqrt2\,\Im u.
\label{eq:scalar-symbols}
\end{equation}
The projection of $2i\,da$ onto $i\mathbb R\omega$ is
\begin{align*}
\frac{2i}{m}\Lambda(da)\,\omega.
\end{align*}
Hence
\end{comment}
\begin{align}
\Phi_0
\left(
2id^*a,\,
\Pi_{i\mathrm{Re}(\Omega^0)\omega}(2i\,da)
\right)
&=
\Phi_0
\left(
2id^*a,\,\frac{2i}{m}\Lambda(da)\,\omega
\right)
\nonumber\\
&=d^*a-i\Lambda(da)\nonumber\\
&=2\bar\partial^*a\nonumber\\
&=\sqrt{2}\bar\partial^*\alpha_1
\label{eq:scalar-Dminus}
\end{align}
Likewise, the $(0,2)$ principal part of $2i\,da$, under
$\Phi_2$, is
\begin{equation}
\sqrt2\,\bar\partial\alpha_1
\label{eq:a-up}
\end{equation}
Combining \eqref{eq:d-up}, \eqref{eq:d-down},
\eqref{eq:scalar-Dminus}, and \eqref{eq:a-up}, the complete
first-order part of $\Phi P$ is
\begin{align*}
\sqrt2(\bar\partial+\bar\partial^*)\eta
=
D_{A^{\mathrm{Ch}}}\eta
\end{align*}
On an almost complex manifold,
\begin{align*}
d=\partial+\bar\partial+N+\bar N
\end{align*}
where $N$ and $\bar N$ are zeroth-order operators determined by the
Nijenhuis tensor.  Similarly,
\begin{align*}
d^*
=
\partial^*+\bar\partial^*+N^*+\bar N^*
\end{align*}
Thus all terms omitted in the above principal-part computation are
zeroth order.  This proves the claim \eqref{eq:P-dirac}.\par
Now we move on to the other part of the linearisation of the curvature equation. Define
\begin{equation}
Q(\sigma)
=
\left(
-i\mathrm{Im}\langle\psi_0,\sigma\rangle,\,
\Pi(d_{\psi_0}q(\sigma))
\right)
\label{eq:Q-def}
\end{equation}
Equations \eqref{linc} and \eqref{eq:gauge} are precisely
\begin{equation}
P(a,b)=\lambda Q(\sigma)
\label{eq:P=lambdaQ}
\end{equation}
Set
\begin{equation}
H=\Phi Q:S_+^{\mathrm{can}}\longrightarrow S_+^{\mathrm{can}}
\label{eq:H-def}
\end{equation}
Next we determine $H$ on every degree. 
\begin{comment}
Recall that, if
\begin{align*}
N=2^{m-1}=\operatorname{rk}_{\mathbb C}S^+,
\end{align*}
then
\begin{align*}
E_\phi
=
\phi\otimes\phi^*
-
\frac{|\phi|^2}{N}\mathrm{Id}.
\end{align*}
Therefore
\begin{equation}
(dE)_{\psi_0}(\sigma)
=
\sigma\otimes\psi_0^*
+
\psi_0\otimes\sigma^*
-
\frac{2\mathrm{Re}\langle\sigma,\psi_0\rangle}{N}\mathrm{Id}.
\label{eq:dE}
\end{equation}
\end{comment}
Let $p>0$ be even and $
\sigma_p\in\Omega^{0,p}$. Define a real bundle isomorphism
\begin{equation}
j_p:
s_p\mathrm{Re}(\Omega^{0,p})
\longrightarrow
\Omega^{0,p},\qquad
j_p(\theta)=\mathrm{cl}(\theta)\psi_0
\label{eq:jp}
\end{equation}
\begin{comment}
If
\begin{align*}
\theta
=
\frac{s_p}{(\sqrt2)^p}
(\tau+\bar\tau),
\end{align*}
then
\begin{equation}
j_p(\theta)=s_p\tau.
\label{eq:jp-coordinate}
\end{equation}
Thus $j_p$ is a real bundle isomorphism.
\end{comment}
We now show that
\begin{equation}
j_p
\left(
\Pi(d_{\psi_0}q(\sigma_p))
\right)
=
h_p\sigma_p\qquad\text{for a constant $h_p>0$}
\label{eq:hp}
\end{equation}
The difficulty is that $q$ is not defined by an explicit formula for its form components. It is defined implicitly through Clifford multiplication: $\mathrm{cl}(q(\phi))=E_\phi$. Recall that $
E_\phi
=
\phi\otimes\phi^*
-
\frac{1}{2^{m-1}}|\phi|^2\mathrm{Id}$.
Therefore
\begin{equation}
d_{\psi_0}E(\sigma)
=
\sigma\otimes\psi_0^*
+
\psi_0\otimes\sigma^*
-
\frac{2\mathrm{Re}\langle\sigma,\psi_0\rangle}{2^{m-1}}\mathrm{Id}
\label{eq:dE}
\end{equation}
Let $\theta\in s_p\mathrm{Re}(\Omega^{0,p})$. Since $\mathrm{cl}(\theta)$ is trace free, equation \eqref{eq:dE} gives us
\begin{equation}
\left\langle
d_{\psi_0}E(\sigma_p),\mathrm{cl}(\theta)
\right\rangle
=
2\mathrm{Re}
\langle
\mathrm{cl}(\theta)\psi_0,\sigma_p
\rangle
\label{eq:dE-pairing}
\end{equation}
The inner product on the left hand side is on End$(S_+^{\mathrm{can}})$ and the one on the right hand side is on $S_+^{\mathrm{can}}$. For the degree-$p$ summand in $i\mathfrak{su}(S_+^{\mathrm{can}})$ (see \eqref{4m-Clifford-isomorphism}, \eqref{4m-2-Clifford-isomorphism}), there is a positive constant $\kappa_{m,p}$ such that
\begin{equation}
\langle \mathrm{cl}(\theta_1),\mathrm{cl}(\theta_2)\rangle
=
\kappa_{m,p}
\langle\theta_1,\theta_2\rangle
\label{eq:clifford-metric}
\end{equation}
Likewise, since $j_p$ is a $U(m)$-equivariant isomorphism, there is
a positive constant $b_{m,p}$ such that
\begin{equation}
\mathrm{Re}\langle
j_p(\theta_1),j_p(\theta_2)
\rangle
=
b_{m,p}
\mathrm{Re}\langle\theta_1,\theta_2\rangle
\label{eq:jp-metric}
\end{equation}
Since $\mathrm{cl}(q(\phi))=E_\phi$, differentiating at $\psi_0$, we get $$\mathrm{cl}\big(d_{\psi_0}q(\sigma_p)\big)=d_{\psi_0}E(\sigma_p)$$
Moreover, because $\theta\in s_p\mathrm{Re}(\Omega^{0,p})$, \eqref{eq:clifford-metric} tells us $$\left\langle\mathrm{cl}\big(d_{\psi_0}q(\sigma_p)\big),\mathrm{cl}(\theta)
\right\rangle=\left\langle\mathrm{cl}\big(\Pi(d_{\psi_0}q(\sigma_p))\big),\mathrm{cl}(\theta)\right\rangle$$
Now take an arbitrary $\tau\in\Omega^{0,p}$ and put $\theta=j_p^{-1}(\tau)$ in equation \eqref{eq:dE-pairing}. We get
\begin{align*}
    \left\langle \mathrm{cl}\big(\Pi(d_{\psi_0}q(\sigma_p))\big),\mathrm{cl}(j_p^{-1}(\tau))\right\rangle=2\mathrm{Re}\left\langle \tau,\sigma_p\right\rangle
\end{align*}
Identity \eqref{eq:clifford-metric} tells us that this is equivalent to
\begin{align*}
    \kappa_{m,p}\left\langle \Pi(d_{\psi_0}q(\sigma_p)),j_p^{-1}(\tau)\right\rangle=2\mathrm{Re}\left\langle \tau,\sigma_p\right\rangle
\end{align*}
Identity \eqref{eq:jp-metric} tells us that this is again equivalent to
\begin{align}
    \frac{\kappa_{m,p}}{b_{m,p}}\mathrm{Re}\left\langle j_p\big(\Pi(d_{\psi_0}q(\sigma_p))\big),\tau\right\rangle=2\mathrm{Re}\left\langle\sigma_p,\tau\right\rangle
\end{align}
Since $\tau$ was arbitrarily chosen, this proves \eqref{eq:hp}:
\begin{align*}
    j_p
\left(
\Pi(d_{\psi_0}q(\sigma_p))
\right)
=
h_p\sigma_p\qquad\text{for a constant $h_p=\frac{2b_{m,p}}{\kappa_{m,p}}>0$}
\end{align*}
Now if $
\Pi((dq)_{\psi_0}(\sigma_p))
=
\frac{s_p}{(\sqrt2)^p}
(\tau_p+\bar\tau_p)$ for some $\tau_p\in\Omega^{0,p}$, we have 
\begin{align*}
    \Phi_p\big(\Pi((dq)_{\psi_0}(\sigma_p))\big)&=t_p\tau_p\\
    &=\frac{t_p}{s_p} j_p\big(\Pi((dq)_{\psi_0}(\sigma_p))\big)\\
    &=\frac{t_p h_p}{s_p}\sigma_p
\end{align*}
Therefore
\begin{equation}
H|_{\Omega^{0,p}}
=
H_p\mathrm{Id},
\qquad
H_p=\frac{t_p h_p}{s_p}
\label{eq:Hp}
\end{equation}
We now treat the $\Omega^{0,0}$ component. Write
\begin{align*}
\sigma_0=(x+iy)\psi_0,
\qquad x,y\in\mathrm{Re}(\Omega^0)
\end{align*}
We have 
\begin{align*}
    Q(\sigma_0)=\Big(-i\mathrm{Im}\langle\psi_0,(x+iy)\psi_0\rangle, \Pi\big(d_{\psi_0}q((x+iy)\psi_0)\big)\Big)
\end{align*}
We get $-i\mathrm{Im}\langle\psi_0,(x+iy)\psi_0\rangle=iy$ and since $q(e^{it}\psi_0)=q(\psi_0)$, we also have $d_{\psi_0}q(i\psi_0)=0$. Moreover $d_{\psi_0}q(\psi_0)=2q(\psi_0)$. Therefore $ Q(\sigma_0)$ reads
\begin{align*}
    Q(\sigma_0)=\big(iy,2x\Pi(q(\psi_0))\big)
\end{align*}
In the proof of theorem \ref{vortex} we show that 
\begin{align*}
    \Pi(q(\psi_0))=\frac{ic_m}{m}\omega,\qquad \text{for a constant $c_m>0$}
\end{align*}
Therefore we get 
\begin{align}
    Q(\sigma_0)=\big(iy,\frac{2ixc_m}{m}\omega\big)\qquad \text{and\,\,\, } H(\sigma_0)=\frac{y}{2}-ic_mx
\end{align}
In the real basis $(1,i)$,
\begin{align*}
H_0:=H|_{\Omega^0}
=
\begin{bmatrix}
0&\frac12\\
-c_m&0
\end{bmatrix}
\end{align*}
Therefore the linearisation of the curvature equation along with the appropriate gauge fixing condition reads
\begin{align}
    (D_{A^{\mathrm{Ch}}}+R)\eta=\lambda H\sigma
\end{align}
with $H$ described as above and this concludes part 2.\par
\medskip
\noindent\textit{Part 3.} We start with a lemma which will be essential in proving the strong ellipticity of $\mathcal A$ in this part. 
\begin{lemma}\label{positivity}
For every even integer $p>0$ and every odd integer $q$ such that
$
|p-q|=1,
$
the degreewise coefficients $H_p$ and $G_q$ satisfy
\[
\mathrm{Re}\left(H_p^{-1}G_q^{-1}\right)>0
\]
Moreover, since there are only finitely many adjacent degree pairs, there is a constant $\kappa_+>0$, depending only on the dimension, such that
\[
\mathrm{Re}\left(H_p^{-1}G_q^{-1}\right)
\geq \kappa_+
\]
for every such pair $(p,q)$.
\end{lemma}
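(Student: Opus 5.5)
The plan is to reduce the inequality to a finite sign check on explicit unit complex numbers, using the closed formulas for the coefficients derived in Parts 1 and 2. By \eqref{eq:Gq-def}, \eqref{eq:Hp} and \eqref{eq:tp-def},
\begin{align*}
H_p^{-1}G_q^{-1}=\frac{s_p}{t_ph_p}\cdot\frac{r_q}{A_q}=\frac{2}{h_p}\cdot\frac{r_q}{r_{p-1}}\cdot\frac{s_p}{A_q}.
\end{align*}
Here $h_p>0$ and the $r_j$ are real and non-zero, so $\frac{2}{h_p}$ is a positive real factor and can be dropped. There are two adjacent cases.
\begin{itemize}
\item If $q=p-1$, then $r_q/r_{p-1}=1$, and we must show $\mathrm{Re}(s_p/A_{p-1})>0$.
\item If $q=p+1$, then the recursion \eqref{eq:r-recursion} gives $r_{p+1}/r_{p-1}=\delta_{p+1}/2$, and we must show $\mathrm{Re}(\delta_{p+1}s_p/A_{p+1})>0$.
\end{itemize}
Since $\mathrm{Re}(z/A)$ has the sign of $\mathrm{Re}(z\bar A)$, it suffices to compute $\mathrm{Re}(s_p\overline{A_q})$ and, in the second case, multiply by $\delta_{p+1}$.

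Next I would make each $A_q$ explicit. Every quantity involved, namely $s_j$, $i^m$, $(-1)^{m+k+1}$ and $\delta_q$, depends only on $m \bmod 4$, $k \bmod 2$ (where $q=2k+1$) and $p \bmod 4$. So the verification is a finite table.

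For $m$ even, $2m-q\equiv -q \pmod 4$. Hence $\{s_q,s_{2m-q}\}=\{1,i\}$, and since $i^m=\pm1$ we get $A_q=\pm1\pm i$ for $3\le q<m$. For $m$ odd, $2m-q\equiv 2-q\pmod 4$, so $s_{2m-q}=s_q$, and $i^m=\pm i$ again gives $A_q=\pm1\pm i$. In both parities this shows $A_q\neq0$, so the $G_q$ are genuinely invertible. In the remaining cases $A_1=i$ and, when $m$ is odd, $A_m$ from \eqref{eq:Am-def} is a unit in $\{\pm1,\pm i\}$.

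Because $s_p\in\{1,i\}$, the quantity $\mathrm{Re}(s_p\overline{A_q})$ simply picks out the real part of $A_q$ (when $s_p=1$) or its imaginary part (when $s_p=i$), which is $\pm1$. The positivity claim therefore becomes the statement that the signs built into $\delta_q$ and into the Hodge-star term of $\tilde\beta$ always select $+1$. I would verify this in each residue class: $m \bmod 4$, then $p \bmod 4$, with $q=p\pm1$. I would use $(-1)^{m+k+1}$ from \eqref{eq:star-bq} and the definition \eqref{eq:delta-def} of $\delta_{p+1}$. As a sanity check, for $m=3$ one finds $A_1=A_3=i$, $s_2=i$ and $\delta_3=1$, so both products equal $1$.

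The main obstacle is exactly this sign bookkeeping, in particular at the boundary degrees. These are $q=1$, where $A_1=i$ is set by hand and not given by the generic formula, and the top degree $q=m$ for $m$ odd, where only the Hodge-dual term contributes to $\tilde\beta$ and the relevant $\delta_q$ is the $m$-odd branch. I would treat these boundary pairs separately, after the generic table.

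If some entry came out negative, I would first recheck the sign convention in \eqref{eq:FG-star} on $S_+$ rather than suspect the recursion for $r_q$. The recursion only introduces the factor $\delta_q/2$, and that factor was chosen precisely so that the $d^*$ term in $F_b$ matches $\bar\partial^*$ in \eqref{eq:d-down}.

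Finally, each product $H_p^{-1}G_q^{-1}$ has positive real part. There are finitely many adjacent pairs $(p,q)$, depending only on $m$. Taking $\kappa_+$ to be the minimum of these real parts gives the uniform bound.
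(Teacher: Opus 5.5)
Your reduction is correct and coincides with the paper's. $H_p^{-1}G_q^{-1}$ is a positive multiple of $s_p/A_{p-1}$ when $q=p-1$, and of $\delta_{p+1}s_p/A_{p+1}$ when $q=p+1$; the degrees $q=1$ and, for odd $m$, $q=m$ are handled separately. The gap is that you stop exactly where the content of the lemma lies. The sign check is announced (``I would verify this in each residue class'') but never done, and you even contemplate an entry coming out negative. Nothing in the proposal rules that out, so the lemma is not yet proved.

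Half of the check follows from what you already wrote. For $3\le q<m$ put $A_q=s_q(1+X_q)$ with $X_q=(-1)^{m+k+1}i^m s_{2m-q}/s_q$; your mod-$4$ analysis gives $X_q\in\{\pm i\}$.

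If $q=p-1$, then $s_p=s_{q+1}=s_q$. So $s_p/A_q=1/(1+X_q)$ has real part $\tfrac12$ whatever the sign of $X_q$; the Hodge-star sign is irrelevant here.

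The case $q=p+1$ is genuinely sign-sensitive. Use $s_{q-1}/s_q=-(-1)^k i$ and write $Y_q=\delta_q(-1)^k i$. Then $\delta_q s_{q-1}/A_q=-Y_q/(1+X_q)$, whose real part is $+\tfrac12$ if $X_q=-Y_q$ and $-\tfrac12$ if $X_q=Y_q$. So the lemma is equivalent to the identity $X_q=-\delta_q(-1)^k i$. It says the signs $\delta_q$ in $F_\beta$ (see \eqref{eq:delta-def}) are matched to the Hodge-star sign in \eqref{eq:FG-star}, and it is the step you must supply. It does hold:
\begin{itemize}
\item for $m=2l$, $X_q=(-1)^l i$ and $\delta_q(-1)^k i=(-1)^{l+1}i$;
\item for $m=2l+1$, $X_q=(-1)^{k+l}i$ and $\delta_q(-1)^k i=(-1)^{k+l+1}i$.
\end{itemize}

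At the boundary, $s_2/A_1=i/i=1$. For $m=2l+1$ one has $A_m=i s_m$ and $s_m/s_{m-1}=(-1)^l i$, hence $\delta_m A_m/s_{m-1}=(-1)^{l+1}\cdot i\cdot(-1)^l i=1$, and therefore also $\delta_m s_{m-1}/A_m=1$. With these computations, which are what the paper's proof carries out, your argument is complete.
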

\begin{proof}
Let $p>0$ be even. First suppose $
p=q+1$, $3\leq q=2k+1<m$. Since $
t_p=\frac{r_q}{2}$, equations \eqref{eq:Gq-def} and \eqref{eq:Hp} give us
\begin{equation}
G_qH_p
=
\frac{h_p}{2}
\frac{A_q}{s_{q+1}}
\label{eq:HG-up}
\end{equation}
Since $q$ is odd $s_q=s_{q+1}$. Therefore we get
\begin{align*}
    \frac{A_q}{s_{q+1}}=\frac{A_q}{s_q}=1+\frac{(-1)^{m+k+1}i^ms_{2m-q}}{s_q}
\end{align*}
Moreover using the identity: $s_j^2=(-1)^{\frac{j(j+1)}{2}}$, we notice that 
\begin{align}\label{id 1}
    \bigg(\frac{(-1)^{m+k+1}i^ms_{2m-q}}{s_q}\bigg)^2=(-1)^{m}(-1)^{m-q}=-1
\end{align}
Since $h_p>0,$ this tells us that Re$(G_qH_p)>0.$\par 
Next we deal with the case $p=q-1,3\leq q=2k+1<m$. Similarly we have 
\begin{align*}
    G_qH_p=\frac{A_q t_p h_p}{r_qs_{q-1}}=\frac{A_q r_{p-1}h_p}{2r_qs_{q-1}}=\frac{A_q h_p}{s_{q-1}\delta_q}=\frac{A_q h_p\delta_q}{s_{q-1}}
\end{align*}
Using the definition of $A_q$ again we get
\begin{align*}
    \frac{A_q\delta_q}{s_{q-1}}&=\frac{\delta_q s_q}{s_{q-1}}\bigg(1+\frac{(-1)^{m+k+1}i^ms_{2m-q}}{s_q}\bigg)\\
    &=\delta_q (-1)^k i\bigg(1+\frac{(-1)^{m+k+1}i^ms_{2m-q}}{s_q}\bigg)\qquad[\text{since for $q$ odd, } \frac{s_q}{s_{q-1}}=(-1)^ki]
\end{align*}
Now let's analyse both the terms $\delta_q (-1)^ki$ and $\frac{(-1)^{m+k+1}i^ms_{2m-q}}{s_q}$ carefully. We start with $m=2l$ even. We get
\begin{align*}
    \delta_q(-1)^k i=(-1)^{l+k+1}(-1)^ki=(-1)^{l+1}i\\
    \frac{(-1)^{m+k+1}i^ms_{2m-q}}{s_q}=(-1)^{l+k+1}\frac{s_{(-2k-1)}}{s_{2k+1}}=(-1)^{l+k+1}\frac{s_{2k-1}}{s_{2k+1}}=(-1)^{l+k+1}\frac{s_{2k}}{s_{2k+1}}=(-1)^li
\end{align*}
We have used here that $s_k$ is $4$-periodic and $\frac{s_q}{s_{q-1}}=(-1)^ki$ for $q=2k+1$ odd. Similarly for $m=2l+1$ odd we get
\begin{align*}
    \delta_q(-1)^k i=(-1)^{\frac{m+1}{2}}(-1)^ki=(-1)^{l+k+1}i\\
     \frac{(-1)^{m+k+1}i^ms_{2m-q}}{s_q}=(-1)^{l+k}i\frac{s_{4l+2-2k-1}}{s_{2k+1}}=(-1)^{l+k}i
\end{align*}
The calculations above tell us that 
\begin{align*}
    \frac{(-1)^{m+k+1}i^ms_{2m-q}}{s_q}=-1\times(\delta_q(-1)^k i)
\end{align*}
Along with identity \eqref{id 1} this tells us
\begin{align*}
    \mathrm{Re}\big(\frac{A_q\delta_q}{s_{q-1}}\big)=1,\qquad \text{therefore }\,\,\mathrm{Re}(G_qH_p)=h_p>0
\end{align*}
There are two cases left: when $q=1$ and for odd $m,$ when $q=m$. For $q=1$ we get
\begin{align*}
\frac{A_1}{s_2}
=
\frac{i}{i}
=
1
\end{align*}
When $m$ is odd and $q=m$, write $m=2\ell+1$.  It follows that 
\begin{align*}
    \delta_m\frac{A_m}{s_{m-1}}&=(-1)^{l+1}(-1)^{m+\frac{m(m+1)}{2}}i^m\frac{s_m}{s_{m-1}}\\
    &=(-1)^{l+1}(-1)^{2l+1+(2l+1)(l+1)}(-1)^li(-1)^l i\\
    &=1
\end{align*}
Thus for every adjacent even-odd pair, $
\mathrm{Re}(G_qH_p)>0$. Since these are non-zero complex numbers,
\begin{align*}
\mathrm{Re}\bigl((G_qH_p)^{-1}\bigr)
=
\frac{\mathrm{Re}(G_qH_p)}{|G_qH_p|^2}
>0
\end{align*}
There are only finitely many degrees, so there exists
$
\kappa_+>0
$
such that
\begin{equation}
\mathrm{Re}\bigl((G_qH_p)^{-1}\bigr)
\geq\kappa_+
\label{eq:inverse-bound}
\end{equation}
for every adjacent degree pair.
\end{proof}
The term $
H^{-1}RG^{-1}D_{A^{\mathrm{Ch}}}$ has order one, because $H^{-1},R$ and $G^{-1}$ are all zeroth order bundle maps.  It therefore makes no contribution to the principal symbol of $\mathcal A$ which is a second order operator. Hence
\begin{equation}
\mathrm{Sym}(\mathcal A)(v^*)
=
H^{-1}\mathrm{Sym}(D_{A^{\mathrm{Ch}}})(v^*)\,
G^{-1}\mathrm{Sym}(D_{A^{\mathrm{Ch}}})(v^*)
\label{eq:A-principal-symbol}
\end{equation}
We have 
\begin{align*}
 \mathrm{Sym}(D_{A^{\mathrm{Ch}}})(v^*)=i\mathrm{cl}(v^*)  
\end{align*}
It remains to understand the insertion of $G^{-1}$ and $H^{-1}$ between the two Dirac symbols.\par 
Fix a point $x\in M$, an even integer $p>0$ and a non-zero covector $v^*\in T^*_xM$. Pick a spinor $\zeta\in\Lambda^{0,p}_x$. Choose a unitary coframe $
\theta^1,\ldots,\theta^m$ at $x$ such that $(v^*)^{0,1}$ is a nonzero multiple of $\bar\theta^1$.
The standard basis of $\Lambda^{0,p}_x$ is formed by the elements
\[
\bar\theta^I
=
\bar\theta^{i_1}\wedge\cdots\wedge\bar\theta^{i_p},
\qquad
I=\{i_1<\cdots<i_p\}
\]
Clifford multiplication by a covector is given by the formula \eqref{Clifford one form}. Consequently,
for a standard basis vector $\bar\theta^I$, exactly one of the following two possibilities occurs. If $1\notin I$, the contraction term vanishes and exterior multiplication by $\bar\theta^1$ is nonzero.  Thus $ \mathrm{cl}(v^*)\bar\theta^I\in\Lambda^{0,p+1}_x$. If $1\in I$, exterior multiplication by $\bar\theta^1$ vanishes and the contraction
term is nonzero. Thus $\mathrm{cl}(v^*)\bar\theta^I\in\Lambda^{0,p-1}_x$. Therefore the symbol of $D_{A^{\mathrm{Ch}}}$ carries every standard basis vector in
$\Lambda^{0,p}_x$ into exactly one adjacent odd degree $
q=p+1$ or
$q=p-1$.\par
Let $e_I=\bar\theta^I$ be one such basis vector, and suppose that
$
\mathrm{Sym}(D_{A^{\mathrm{Ch}}})(v^*)e_I\in\Lambda^{0,q}_x.
$ Since $G^{-1}$ acts on $\Lambda^{0,q}$ by the complex scalar $G_q^{-1}$, we have
\[
G^{-1}\mathrm{Sym}(D_{A^{\mathrm{Ch}}})(v^*)e_I
=
G_q^{-1}\mathrm{Sym}(D_{A^{\mathrm{Ch}}})(v^*)e_I
\]
The Dirac symbol is complex-linear, so the scalar $G_q^{-1}$ can be pulled through the second Dirac symbol:
\begin{align*}
\mathrm{Sym}(D_{A^{\mathrm{Ch}}})(v^*)G^{-1}
\mathrm{Sym}(D_{A^{\mathrm{Ch}}})(v^*)e_I
&=G_q^{-1}
\mathrm{Sym}(D_{A^{\mathrm{Ch}}})(v^*)
\mathrm{Sym}(D_{A^{\mathrm{Ch}}})(v^*)e_I\\
&=G_q^{-1}|v^*|^2e_I
\end{align*}
Since $H^{-1}$ acts on $\Lambda^{0,p}$ by $H_p^{-1}$, \eqref{eq:A-principal-symbol} gives
\begin{equation}
\mathrm{Sym}(\mathcal A)(v^*)e_I
=
|v^*|^2H_p^{-1}G_q^{-1}e_I
=
|v^*|^2(G_qH_p)^{-1}e_I
\label{eq:symbol-on-basis}
\end{equation}
Thus the coefficient that occurs in the second-order symbol is precisely
$(G_qH_p)^{-1}$.  By lemma \ref{positivity},
\[
\mathrm{Re}\big((G_qH_p)^{-1}\big)\geq\kappa_+>0
\]
Hence
\begin{equation}
\mathrm{Re}\left\langle
\mathrm{Sym}(\mathcal A)(v^*)e_I,e_I
\right\rangle
\geq
\kappa_+|v^*|^2|e_I|^2
\label{eq:basis-positive}
\end{equation}
We now pass from the standard basis vectors to an arbitrary element of
$\Lambda^{0,p}_x$.  Define
\[
V_p^\uparrow(v^*)
=
\operatorname{span}\{\bar\theta^I:1\notin I\},
\qquad
V_p^\downarrow(v^*)
=
\operatorname{span}\{\bar\theta^I:1\in I\}
\]
Then
\[
\Lambda^{0,p}_x
=
V_p^\uparrow(v^*)\oplus V_p^\downarrow(v^*)
\]
is an orthogonal decomposition.  The symbol of $D_{A^{\mathrm{Ch}}}$ maps
$V_p^\uparrow(v^*)$ into $\Lambda^{0,p+1}$ and
$V_p^\downarrow(v^*)$ into $\Lambda^{0,p-1}$. Therefore
\eqref{eq:symbol-on-basis} shows that
\begin{align*}
\mathrm{Sym}(\mathcal{A})(v^*)
=
|v^*|^2(G_{p+1}H_p )^{-1}\mathrm{Id}
\quad\text{on }V_p^\uparrow(v^*)
\end{align*}
and
\begin{align*}
 \mathrm{Sym}(\mathcal A)(v^*)
=
|v^*|^2(G_{p-1}H_p )^{-1}\mathrm{Id}
\quad\text{on }V_p^\downarrow(v^*)   
\end{align*}
Write
\[
\zeta=\zeta^\uparrow+\zeta^\downarrow
\]
according to this orthogonal decomposition.  Since the two summands are orthogonal
and are preserved by $\mathrm{Sym}(\mathcal A)(v^*)$, we obtain
\begin{align*}
\mathrm{Re}\left\langle
\mathrm{Sym}(\mathcal A)(v^*)\zeta,\zeta
\right\rangle
&=
|v^*|^2
\left(\mathrm{Re}(G_{p+1}H_p )^{-1}\right)
|\zeta^\uparrow|^2+
|v^*|^2
\left(\mathrm{Re}(G_{p-1}H_p )^{-1}\right)
|\zeta^\downarrow|^2\\
&\geq
\kappa_+|v^*|^2
\left(
|\zeta^\uparrow|^2+|\zeta^\downarrow|^2
\right)\\
&=
\kappa_+|v^*|^2|\zeta|^2
\end{align*}
for every $\zeta\in\Lambda^{0,p}_x$ with $p>0$ even.\par
It remains to treat $\Lambda^{0,0}_x$.  This case is different because $H_0$ is real-linear rather than multiplication by a complex scalar. Let
$
\zeta\in\Lambda^{0,0}_x.
$
The symbol $\mathrm{Sym}(D_{A^{\mathrm{Ch}}})(v^*)$ sends $\Lambda^{0,0}_x$ into
$\Lambda^{0,1}_x$.  Hence $G^{-1}$ acts on the intermediate spinor by $G_1^{-1}$.
Since
\[
G_1^{-1}=\Big(\frac{A_1}{r_1}\Big)^{-1}=-i
\]
is a complex scalar and the Dirac symbol is complex-linear,
\[
\begin{aligned}
\mathrm{Sym}(\mathcal A)(v^*)\zeta
&=
H_0^{-1}
\mathrm{Sym}(D_{A^{\mathrm{Ch}}})(v^*)
G_1^{-1}
\mathrm{Sym}(D_{A^{\mathrm{Ch}}})(v^*)\zeta\\
&=
H_0^{-1}(-i)
\mathrm{Sym}(D_{A^{\mathrm{Ch}}})(v^*)
\mathrm{Sym}(D_{A^{\mathrm{Ch}}})(v^*)\zeta\\
&=
|v^*|^2H_0^{-1}(-i)\zeta
\end{aligned}
\]
Write
\[
\zeta=(x+iy)\psi_0\in\Lambda^{0,0}_x,\qquad{then }\,\,\,(-i\zeta)=(y-ix)\psi_0
\]
We get
\begin{align*}
    H_0^{-1}(-i\zeta)=\begin{bmatrix}
    0&-\frac{1}{c_m}\\2&0
    \end{bmatrix}\begin{bmatrix}
    y\\-x
    \end{bmatrix}=\begin{bmatrix}
    \frac{x}{c_m}\\2y
    \end{bmatrix}=\Big(\frac{x}{c_m}+2iy\Big)\psi_0
\end{align*}
Therefore we have
\[
\begin{aligned}
\left\langle
\text{Sym}(\mathcal A)(v^*)\zeta,\zeta
\right\rangle
&=
|v^*|^2
\left(c_m^{-1}x^2+2y^2\right)\\
&\geq
\kappa_0|v^*|^2(x^2+y^2)\\
&=
\kappa_0|v^*|^2|\zeta|^2
\end{aligned}
\]
where
\[
\kappa_0:=\text{min}\{c_m^{-1},2\}>0
\]
The even-degree decomposition
\[
(S_+^{\mathrm{can}})_x
=
\bigoplus_{\substack{0\leq p\leq m\\ p\ \mathrm{even}}}
\Lambda^{0,p}_x
\]
is orthogonal.  The preceding calculation also shows that
$\mathrm{Sym}(\mathcal A)(v^*)$ preserves each even-degree summand.  Thus, if
\[
\zeta=\sum_{\substack{0\leq p\leq m\\ p\ \mathrm{even}}}\zeta_p,
\qquad
\zeta_p\in\Lambda^{0,p}_x
\]
then
\[
\begin{aligned}
\mathrm{Re}\left\langle
\mathrm{Sym}(\mathcal A)(v^*)\zeta,\zeta
\right\rangle
&=
\mathrm{Re}\sum_{\substack{0\leq p\leq m\\ p\ \mathrm{even}}}
\left\langle
\mathrm{Sym}(\mathcal A)(v^*)\zeta_p,\zeta_p
\right\rangle\\
&\geq
\kappa|v^*|^2
\sum_{\substack{0\leq p\leq m\\ p\ \mathrm{even}}}|\zeta_p|^2\\
&=
\kappa|v^*|^2|\zeta|^2
\end{aligned}
\]
where
\[
\kappa:=\min\{\kappa_0,\kappa_+\}>0\qedhere
\]
\end{proof}
\subsubsection*{Solution on K\"ahler manifolds: vortices}
\begin{proof}[Proof of theorem \ref{vortex}]
    Gauduchon \cite{Paul} tells us that on a K\"ahler manifold, the Dirac equation reads: $D_A=\sqrt{2}(\bar\partial_B+\bar\partial_B^*)$. Since $\varphi\in\Omega^0(M,\mathcal L),$ the Dirac equation \eqref{Dirac} reads
    \begin{align*}
        \bar\partial_B\varphi=0
    \end{align*}
    Notice that since $\varphi\in\Omega^0(M,\mathcal L)$, $E_\varphi$ is $U(m)$ invariant. Therefore its form representative $q(\varphi)$ must lie in the $U(m)$-invariant subspace of even forms. That invariant subspace is generated by the powers of the K\"ahler form. Hence there exists a constant $c_m$ depending only on the dimension of $2m=n$ such that 
    \begin{align*}
        \Pi\big(q(\varphi)\big)=\frac{ic_m}{m}|\varphi|^2\omega
    \end{align*}
    The same reasoning gives us
    \begin{align*}
        \Pi\big(q(\psi_0)\big)=\frac{ic_m}{m}\omega
    \end{align*}
    Therefore the curvature equation \eqref{Curvature} turns into the following.
    \begin{align*}
        \Pi\big(2F_B+\frac{irc_m}{m}\omega\big)=\frac{ic_m}{m}|\varphi|^2\omega
    \end{align*}
    Writing the iRe$(\Omega^0)\omega$ component and the $i$Re$\Omega^{0,2}$ component of the curvature equation separately we get
    \begin{align*}
        i\Lambda F_B=\frac{c_m}{2}(r-|\varphi|^2)\\
        F_B^{0,2}=0
    \end{align*}
    Next we show that for $m\geq 3$, $c_m=\frac{m}{2^{m-1}}$. Recall $
\mathrm{cl}(q(\psi_0))=E_{\psi_0}=\psi_0\otimes\psi_0^*-\frac{1}{2^{m-1}}\mathrm{Id}$. We pair it with $\mathrm{cl}(i\omega)$ to get
\[
\begin{aligned}
\left\langle E_{\psi_0},\mathrm{cl}(i\omega)\right\rangle
&=
\left\langle
\psi_0\otimes\psi_0^*,
\mathrm{cl}(i\omega)
\right\rangle \qquad[\text{since $\mathrm{cl}(i\omega)$ is trace-free}]\\
&=
\left\langle
\mathrm{cl}(i\omega)\psi_0,\psi_0
\right\rangle\\
&=m
\end{aligned}
\]
For $m\geq3$, Clifford multiplication on two forms satisfies
\[
\left\langle
\mathrm{cl}(\theta_1),\mathrm{cl}(\theta_2)
\right\rangle
=
2^{m-1}\langle\theta_1,\theta_2\rangle
\]
Indeed, the proportionality constant can be computed using a unit simple
two form $\theta=i e^1\wedge e^2$.  Since
\[
\mathrm{cl}(\theta)^*\mathrm{cl}(\theta)=\mathrm{Id}_{S_+}
\]
we obtain
\[
|\mathrm{cl}(\theta)|^2
=
\mathrm{tr}(\mathrm{Id}_{S_+})
=
\mathrm{rank}_{\mathbb C}(S_+)=2^{m-1}
\]
Thus
\[
|\mathrm{cl}(i\omega)|^2
=
2^{m-1}|\omega|^2
=
2^{m-1}m
\]
Thereafter we have 
\begin{align*}
    m=\langle \mathrm{cl}(\Pi(q(\psi_0))),\mathrm{cl}(i\omega)\rangle=2^{m-1}\langle\frac{c_m}{m}i\omega,i\omega\rangle=2^{m-1}c_m
\end{align*} 
leading to $c_m=\frac{m}{2^{m-1}}$. When $m=2$, Clifford
multiplication on $S_+$ does not see all of $\Omega^2$, but only the self-dual part $\Omega^2_+$.
Thus the normalization
\[
|\mathrm{cl}(\theta)|^2
=
2^{m-1}|\theta|^2
\]
for arbitrary two forms does not apply in this case.  In dimension four the correct factor on $i\Omega^2_+$ is $4$.  Consequently one obtains instead
\begin{align*}
q(\psi_0)=\frac{i\omega}{4},
\qquad
c_2=\frac12\tag*{\qedhere}  
\end{align*}
\end{proof}
\subsection{Taubes' limit}\label{limit}
Let's take the equations \eqref{Dirac}, \eqref{Curvature} with $r=1$.
\begin{align*}
     D_{A,\beta}\phi=0\\\
    \Pi(F_A-F_{{A^{\mathrm{Ch}}}}+F_\beta)+\frac{ic_m}{m}\omega=\Pi(q(\phi))
\end{align*}
Take a geodesic ball of radius $1$ around a point $x\in M$ and dilate the metric inside the ball by a constant $r>1$. We get a new metric $g_r:=r g$. Since $g(u,v)=\omega(u,Jv)$, the appropriate symplectic form $\omega_r$ with respect to the metric $g_r$ is $r\omega$. Another way of writing this is the following: the fundamental two form associated with \((g_r,J)\) is \(\omega_r=r\omega\). It is closed and coclosed, hence harmonic and satisfies \(|\omega_r|_{g_r}^2=m\). Now we write down the equations above in the metric $g_r$. They read 
\begin{align}
    D_{A_r,\beta_r}\phi_r=0\label{Dirac r}\\
    \Pi(F_{A_r}-F_{{A^{\mathrm{Ch}}_r}}+F_{\beta_r})+\frac{ic_m}{m}\omega_r=\Pi(q(\phi_r))\label{Curvature r}
\end{align}
Take
\begin{align*}
    \phi_r=\frac{1}{\sqrt r}\phi, \, (\beta_{r})_{2k+1}=(\sqrt r)^{2k}\beta_{2k+1} \text{ and } A_r=A
\end{align*}
The Hodge-star operator of $g_r$ and $g$ are related in the following way. $$*_{g_r}|_{\Omega^k}=(\sqrt{r})^{n-2k}*_g|_{\Omega^k}$$
Thereafter $$d^{*_{g_r}}=\frac{1}{r}d^{*_g} $$
A small calculation shows
\begin{align*}
    \mathrm{cl}_{g_r}\big((\beta_r)_{2k+1}\big)=\mathrm{cl}_{g}\Big(\frac{(\sqrt r)^{2k}}{(\sqrt r)^{2k+1}}\beta_{2k+1}\Big)=\frac{1}{\sqrt r}\mathrm{cl}_g(\beta_{2k+1})\\
    \mathrm{cl}_{g_r}\big(*_{g_r}(\beta_r)_{2k+1}\big)=\mathrm{cl}_{g}\Big(\frac{(\sqrt r)^{2k}\times (\sqrt r)^{n-2(2k+1)}}{(\sqrt r)^{n-2k-1}}*_g\beta_{2k+1}\Big)=\frac{1}{\sqrt r}\mathrm{cl}_g(*_g\beta_{2k+1})
\end{align*}
Hence,
\begin{align*}
    \mathrm{cl}_{g_r}(\tilde\beta_r)=\frac{1}{\sqrt{r}}\mathrm{cl}_g(\tilde\beta)
\end{align*}
Similarly we also get
\begin{align*}
     (D_{A_r})_{g_r}=\frac{1}{\sqrt r}(D_A)_g
\end{align*}
Therefore, solving the Dirac equation \eqref{Dirac r} on $g_r$ is equivalent to solving the Dirac equation
$\frac{1}{r}D_{A,\beta}\phi=0$ on $g$. The curvature equation \eqref{Curvature r} can also be thought of as 
\begin{align*}
    \mathrm{cl}_{g_r}\big(\Pi(F_{A_r}-F_{{A^{\mathrm{Ch}}_r}}+F_{\beta_r})+\frac{ic_m}{m}r\omega\big)=\mathrm{cl}_{g_r}\big(\Pi(q(\phi_r))\big)
\end{align*}
This description is more useful to avoid complications coming from the isomorphisms \eqref{4m-Clifford-isomorphism}, \eqref{4m-2-Clifford-isomorphism} which are used to define the quadratic map $q$. Notice 
\begin{align*}
    \mathrm{cl}_{g_r}(d(\beta_r)_{2k+1})=\mathrm{cl}_g\Big(\frac{(\sqrt r)^{2k}}{(\sqrt r)^{2k+2}}d\beta_{2k+1}\Big)=\frac{1}{r}\mathrm{cl}_g(d\beta_{2k+1})\\
    \mathrm{cl}_{g_r}(d^{*_{g_r}}(\beta_r)_{2k+1})=\mathrm{cl}_g\Big(\frac{(\sqrt r)^{2k}}{(\sqrt r)^{2k}\times (\sqrt r)^{2}}d^{*_g}\beta_{2k+1}\Big)=\frac{1}{r}\mathrm{cl}_g(d^{*_g}\beta_{2k+1})
\end{align*}
Moreover since $q(\phi)$ is quadratic in $\phi$, we have 
\begin{align*}
    \mathrm{cl}_{g_r}\big(\Pi(q(\phi_r))\big)=\frac{1}{r}\mathrm{cl}_g\big(\Pi(q(\phi))\big) 
\end{align*}
Thereafter solving the curvature equation \eqref{Curvature r} on $g_r$ is equivalent to solving the equation $\frac{1}{r}\Pi(F_A-F_{{A^{\mathrm{Ch}}}}+F_\beta+\frac{ic_m}{m}r\omega)=\frac{1}{r}\Pi(q(\phi))$ on $g$. In other words, the \textit{zooming in} procedure parallels taking $r\rightarrow\infty$ in the original metric $g$. \par
So, it's natural to ask if there's a Taubes' limit for the equations \eqref{Dirac r}, \eqref{Curvature r}, i.e., say we have solutions to the equations $(A_r,\beta_r,\phi_r)$ for all large enough $r$. We define part of $\phi_r$ lying inside $\Omega^0(M,\mathcal L)$ to be $\sqrt{r}\phi^0_r$. 
\begin{center}
    \textit{Does $(\phi^0_r)^{-1}(0)$ converge to something as $r\rightarrow\infty$?}
\end{center}
Since $\phi^0_r$ is a section of $\mathcal{L},$ generically the zero set of $\phi^{0}_r$ is a codimension $2$ submanifold with homology class the Poincar\'e dual of $c_1(\mathcal L): PD(c_1(\mathcal L))$.\par 
If we are on a K\"ahler manifold, the solution to the equations coming from theorem \ref{vortex} consists of holomorphic sections of $\mathcal L$. In that case, one can choose the solutions corresponding to a fixed effective divisor, so the zero set of the holomorphic component is independent of \(r\) and therefore $(\phi^0_r)^{-1}(0)$ is a complex codimension $1$ complex submanifold and hence indeed there's a Taubes' limit as $r\rightarrow\infty$. One might be hopeful that the Taubes' limit might exist for symplectic manifolds as well and the limit is a \textit{symplectic submanifold} of real codimension $2$. Moreover they will have homology class $PD(c_1(\mathcal L))$. If this ambitious scheme works, the payoff would be more powerful than that of Donaldson's theorem \cite{Simon}, which produces symplectic hypersurfaces Poincar\'e dual to sufficiently large multiples of an integral symplectic class. If \(\mathcal L\) is chosen with \(c_1(\mathcal L)=[\omega]\), this is \(k\,PD(c_1(\mathcal L))\).
\section{Analysis in dimension $6$}\label{sec4}
\subsection{Spin geometry in dimension $6$} \label{spin geometry 6}
In this section we discuss some facts in spin geometry in dimension $6$. These facts are probably known to the experts. However they are still included due to lack of a standard reference.
\subsubsection*{Spin$(6)\cong SU(4)$:} We start with a description of the universal cover of $SO(6),$ i.e., Spin$(6)$. This is one of the exceptional isomorphisms of Lie groups. Enough to show that $SU(4)$ is a double cover of $SO(6)$ since $\pi_1(SO(6))=\mathbb{Z}/2\mathbb{Z}.$ The double cover is constructed as follows:\par
$SU(4)$ has a natural action on $\mathbb{C}^4,$ it induces an action of $SU(4)$ on $\Lambda^2\mathbb{C}^4$. $\Lambda^2\mathbb{C}^4$ is six dimensional (in $\mathbb{C}$) and has a hermitian inner-product induced by the one on $\mathbb{C}^4.$ This can be defined by saying if $\{e_1,\ldots,e_4\}$ is an orthonormal basis for $\mathbb C^4$ then $\{e_i \wedge e_j\}$ is an orthonormal basis for $\Lambda^2 \mathbb C^4$ or more invariantly,
$$
\langle v_1 \wedge v_2, w_1 \wedge w_2 \rangle = \det\langle v_i, w_j\rangle
$$
Where $\langle v, w\rangle$ is the Hermitian inner product on $\mathbb C^4.$ The action of $SU(4)$ on $\Lambda^2\mathbb{C}^4$ preserves this inner product. Moreover we have a hodge star operator $*$ on $\Lambda^2\mathbb{C}^4$ induced by the inner product such that $*^2=1,$ Hence it splits the space into self-dual and anti self-dual forms: 
$$\Lambda^2\mathbb{C}^4=\Lambda^{2}_+\mathbb{C}^4\oplus \Lambda^{2}_-\mathbb{C}^4
$$
This is a decomposition as real vector spaces and the $SU(4)$-equivariant $*$ presented here is a conjugate linear real structure on $\Lambda^2\mathbb{C}^4$. Notice that the action of $SU(4)$ is preserved under this splitting and the real dimension of $\Lambda^{2}_+\mathbb{C}^4$ is $6$. Since, $SU(4)$ preserves the inner product and has determinant one, we get a map from $SU(4)\rightarrow SO(6),$ one checks that the kernel is $\pm$Id, and therefore, by dimensionality reasons, must be a surjection.\par
The discussion above proves the following. 
\begin{lemma}
For $6$-dimensional spin manifold $M$, 
\begin{align*}
    \Lambda^{2}_+(S_+)\cong TM.
\end{align*}
\end{lemma}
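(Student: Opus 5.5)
The plan is to pass from the representation-theoretic statement just established to the associated bundles over $M$. A spin structure on the oriented Riemannian $6$-manifold $M$ is a principal $\mathrm{Spin}(6)$-bundle $P\to M$ together with a double covering $P\to P_{SO}(TM)$, equivariant for the covering homomorphism $\mathrm{Spin}(6)\to SO(6)$. Under the exceptional isomorphism $\mathrm{Spin}(6)\cong SU(4)$, we have
\[
S=P\times_{SU(4)}\mathbb C^4 \qquad\text{and}\qquad TM=P\times_{SU(4)}\mathbb R^6 .
\]
Here $SU(4)$ acts on $\mathbb R^6$ through the covering map. The first thing I would check is that $\mathbb C^4$ really is the positive half-spin representation $\Delta_+$. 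Both half-spin representations of $\mathrm{Spin}(6)$ are $4$-dimensional; one is the standard representation of $SU(4)$ and the other is its dual. So after possibly changing orientation, or replacing the identification by its composite with complex conjugation, we may arrange $S_+=P\times_{SU(4)}\mathbb C^4$. This only affects which half-spin bundle appears, not the form of the lemma.

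The key step is the $SU(4)$-equivariant isomorphism constructed above. The map $SU(4)\to SO(6)$ was built from the action of $SU(4)$ on the real $6$-dimensional subspace $\Lambda^2_+\mathbb C^4\subset\Lambda^2\mathbb C^4$. Here $*$ is the conjugate-linear operator determined by
\[
\alpha\wedge *\beta=\langle\alpha,\beta\rangle\,e_1\wedge e_2\wedge e_3\wedge e_4 ,
\]
with the Hermitian form inducing a real inner product. By construction, then, $\mathbb R^6$ with its $SU(4)$-action via the covering is $\Lambda^2_+\mathbb C^4$ with the induced action. So I would apply the associated bundle functor $P\times_{SU(4)}(\cdot)$ to this equivariant isometry. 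Since forming $\Lambda^2$ and the $*$-splitting commute with taking associated bundles, this gives
\[
\Lambda^2_+(S_+)=P\times_{SU(4)}\Lambda^2_+\mathbb C^4\cong P\times_{SU(4)}\mathbb R^6=TM .
\]
Two ingredients make $\Lambda^2_+(S_+)$ well defined globally. The operator $*$ depends only on the Hermitian metric and on a volume element in $\Lambda^4 S_+$. This bundle is trivialised because the structure group is $SU(4)$, since the determinant equals $1$.

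The only real obstacle is the identification itself: one must make sure that the vector representation of $\mathrm{Spin}(6)$ obtained from the Clifford module structure agrees with $\Lambda^2_+\mathbb C^4$. If instead we identified $TM$ by some other route, it could a priori differ by an outer automorphism. I would settle this by noting that $SO(6)$ has a unique real $6$-dimensional irreducible representation up to isomorphism. Consequently any $\mathrm{Spin}(6)$-representation on $\mathbb R^6$ that factors through a surjection onto $SO(6)$ with kernel $\pm\mathrm{Id}$ is the vector representation, possibly twisted by an automorphism. Such an automorphism does not change the resulting bundle up to isomorphism, as $TM$ and its orientation reversal are isomorphic as real vector bundles. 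Alternatively, one can check directly that Clifford multiplication $TM\otimes S_+\to S_-\cong S_+^*$ corresponds to the wedge pairing $\Lambda^2\mathbb C^4\otimes\mathbb C^4\to\Lambda^3\mathbb C^4\cong(\mathbb C^4)^*$. For the bundle-level statement, the dimension and irreducibility argument suffices.
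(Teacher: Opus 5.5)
Your proposal is correct and is essentially the paper's argument: the paper deduces the lemma directly from the construction of the double cover $SU(4)\to SO(6)$ via the action on $\Lambda^2_+\mathbb C^4$, and leaves the passage to associated bundles implicit. You make that passage explicit and supply the bookkeeping the paper omits: $\Lambda^4 S_+$ is trivial in the spin case, so $*$ is globally defined, and neither the choice of identification $\mathrm{Spin}(6)\cong SU(4)$ nor the choice of half-spin representation changes the isomorphism class of the resulting bundle.
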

\subsubsection*{Positive and negative spin bundles are dual to each other:} For both groups Spin$(6)\cong SU(4)$, it has two irreducible representations of dimension $4$ and they are dual to each other. Since the representations corresponding to positive and negative spinors have different highest weights:  $(1/2,1/2,1/2)$ and  $(1/2,1/2,-1/2)$, the positive and negative spinors must come from the two different irreducible representations and hence they are dual to each other.\par 
If we start from $SU(4)$, which has a standard $4$-dimensional irreducible representation (say $V$). The wedge product $\Lambda^3 V$ is also $4$-dimensional and irreducible. These two representations are not isomorphic to each other because their highest weights are different, thus $V$ and $\Lambda^3 V$ correspond to two spinor representations of Spin$(6)$. However, $SU(4)$ admits an outer automorphism that exchanges the highest weights of $V$ and  $\Lambda^3 V$. So the positive spinor can be either $V$ or $\Lambda^3 V$, depending on the isomorphism between $SU(4)$ and Spin$(6)$ we choose. Despite this outer automorphism, we can still observe that two spinors are dual to each other since the wedge product gives us a non-degenerate pairing: $V\times \Lambda^3 V\rightarrow\mathbb{C}.$
\subsubsection*{Clifford multiplication:} Clifford multiplication of complexified $0$ and $2$-forms give the isomorphisms:
\begin{align*}
    \text{End}_\mathbb{C}(S_+,S_+)\cong \big(\Lambda^{0}(M)\oplus\Lambda^2(M)\big)\otimes\mathbb{C}\cong \text{End}_\mathbb{C}(S_-,S_-)
\end{align*}
For a hermitian endomorphism of spinors of same chirality, we can split it into a trace-free part and the trace-part. $\Lambda^0$ is the trace part and the imaginary $2$-forms (or equivalently the real $4$-forms) (real dimension $15$) act as trace-free hermitian endomorphisms via Clifford multiplication:
\begin{align}\label{Clifford identity 1}
    \mathrm{cl}:i\Lambda^2(M)\cong i\mathfrak{su}(S_+)\cong i\mathfrak{su}(S_-)
\end{align}
\begin{comment}
    \begin{lemma}
    The following diagrams commute.
\begin{align*}
  \begin{tikzcd}
    \Lambda^4(M)\arrow{r}{i *}\arrow[swap]{d}{c}&i\Lambda^2(M)\arrow{d}{c}&&\Lambda^4(M)\arrow{r}{i *}\arrow[swap]{d}{c}&i\Lambda^2(M)\arrow{d}{c}\\
    i\mathfrak{su}(S_{+})\arrow{r}{-Id}&i\mathfrak{su}(S_{+})&&i\mathfrak{su}(S_{-})\arrow{r}{Id}&i\mathfrak{su}(S_{-})
  \end{tikzcd}
\end{align*} 
\end{lemma}
\begin{proof}
\begin{align*}
    \begin{tikzcd}
    e_{i_1}\wedge e_{i_2}\wedge e_{i_3}\wedge e_{i_4}\arrow[r,mapsto]\arrow[d,mapsto]&ie_{i_5}\wedge e_{i_6}\arrow[d,mapsto]\\
    c(e_{i_1})c(e_{i_2})c(e_{i_3})c(e_{i_4})\arrow[r,mapsto,"?"]{-Id}&ic(e_{i_5})c(e_{i_6})
  \end{tikzcd}
\end{align*} 
On $S_{+},ic({e_1})c({e_2})c({e_3})c({e_4})c({e_5})c({e_6})$ is the identity element. From the definition of the Hodge-$*$ operator,\\
$e_{i_1}\wedge e_{i_2}\wedge e_{i_3}\wedge e_{i_4}\wedge e_{i_5}\wedge e_{i_6}=e_1\wedge e_2\wedge e_3\wedge e_4\wedge e_5\wedge e_6\\
\Rightarrow c(e_{i_1})c(e_{i_2})c(e_{i_3})c(e_{i_4})c(e_{i_5})c(e_{i_6})=c({e_1})c({e_2})c({e_3})c({e_4})c({e_5})c({e_6})=i\cdot \text{Id}$\hspace{4 ex} on $S_{+}$\\
$\Rightarrow c(e_{i_1})c(e_{i_2})c(e_{i_3})c(e_{i_4})c(e_{i_5})c(e_{i_6})c(e_{i_5})c(e_{i_6})=i\cdot c(e_{i_5})c(e_{i_6})$\hspace{5 ex} on $S_{+}$\\
$\Rightarrow c(e_{i_1})c(e_{i_2})c(e_{i_3})c(e_{i_4})=-ic(e_{i_5})c(e_{i_6})$\hspace{5 ex} on $S_{+}$\\
\\
Identical calculation proves the commutativity of the second diagram.
\end{proof}
\end{comment}
On $\Lambda^{3}(M;\mathbb{C}),*^2=-1.$ Hence $\Lambda^3(M;\mathbb{C})$ splits as $\Lambda^{3}_+(X,\mathbb{C})\oplus \Lambda^{3}_-(M;\mathbb{C}).$ Here $*$ is a complex linear extension of the Hodge-star operator on $\Lambda^3(M;\mathbb{R}).$ We use the subscripts $+$ and $-$ respectively will denote the eigen-spaces of $+i$ and $-i$. The Clifford action of $\Lambda^{3}_+(M,\mathbb{C})$ on the negative spinors is trivial and the Clifford action of $\Lambda^3_-(M,\mathbb{C})$ on the positive spinors is trivial. \par Therefore, we have the following isomorphisms:
\begin{align*}
    &\mathrm{cl}:\big(\Lambda^{1}(M)\oplus\Lambda^{3}_+(M)\big)\otimes\mathbb{C}\cong\text{End}_\mathbb{C}(S_+,S_-) \\
    &\mathrm{cl}:\big(\Lambda^{1}(M)\oplus\Lambda^{3}_-(M)\big)\otimes\mathbb{C}\cong\text{End}_\mathbb{C}(S_-,S_+)
\end{align*}
An immediate question arises: \textit{how does one differentiate a one-form from a three form as an element of $\text{End}_\mathbb{C}(S_+,S_-)$ or $\text{End}_\mathbb{C}(S_-,S_+)?$ In other words, what is the induced splitting on the right hand sides from the splitting on the left hand sides of the isomorphisms ?} Notice, $\text{End}_\mathbb{C}(S_-,S_+)\cong S_-^*\otimes S_+\cong S_+\otimes S_+$. This splits into symmetric and anti-symmetric tensors:
$$
    S_+\otimes S_+\cong\text{ Sym}(S_+)\oplus \Lambda^2(S_+)
$$
$\Lambda^1\otimes\mathbb{C}$ identifies with $\Lambda^2(S_+)$ (both have rank $6)$, and $\Lambda^{3}_-\otimes\mathbb{C}$ identifies with $\text{ Sym}(S_+)$ (both have rank $10).$ Similarly, for $\text{End}_\mathbb{C}(S_+,S_-)\cong S_+^*\otimes S_-\cong S_-\otimes S_-\cong\text{ Sym}(S_-)\oplus \Lambda^2(S_-)$; $\Lambda^1\otimes\mathbb{C}$ identifies with $\Lambda^2(S_-)$ (both have rank $6)$, and $\Lambda^{3}_+\otimes\mathbb{C}$ identifies with $\text{ Sym}(S_-)$ (both have rank $10).$\par 
When the Riemannian $6$-manifold $(M,g)$ admits a compatible almost complex structure $J$, one can analyse the identity \eqref{Clifford identity 1} further. The almost complex structure $J$ induces a splitting in both the space of two forms and the spin bundle.
\begin{align*}
    \Lambda^2=\mathrm{Re}(\Lambda^0)\omega\oplus \mathrm{Re}(\Lambda^{0,2})\oplus \mathrm{Re}(\Lambda^{1,1}_{p})\\
    S_+=(\Lambda^{0}\oplus\Lambda^{0,2})\otimes\mathcal L,\,\, S_-=(\Lambda^{0,1}\oplus\Lambda^{0,3})\otimes \mathcal L
\end{align*}
where $\Lambda^{1,1}_{p}$ denotes the primitive $(1,1)$-forms, i.e., $(1,1)$-forms which are pointwise orthogonal to $\omega$. cl$(\omega)$ preserves the splitting of $S_+$ and cl$(\Omega^{0,2})$ sends $\Omega^0(\mathcal L)$ to $\Omega^{0,2}(\mathcal L)$:
\begin{align*}
    \mathrm{cl}:\Lambda^{0,2}\cong \Lambda^0(\mathcal L)\otimes (\Lambda^{0,2}(\mathcal{L}))^*
\end{align*}
The Clifford action of forms in $\Lambda^{1,1}_{p}$ are trivial on $\Omega^0(\mathcal{L})$ and in fact we have
\begin{align}\label{Clifford isom}
    \mathrm{cl}:i\mathrm{Re}(\Lambda^{1,1}_{p})\cong i\mathfrak{su}(\Lambda^{0,2}(\mathcal{L}))
\end{align}
Notice both have rank $8$.
\subsubsection*{Spin$^\mathbb C$ iff almost complex in dimension $6$:}
\begin{lemma}
A $6$-dimensional manifold $M$ is spin$^\mathbb C$ iff it admits an almost complex structure $J$.
\end{lemma}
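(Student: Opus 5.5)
The backward direction is the easy one, and I would dispose of it first. If $M$ carries an almost complex structure $J$, then $J$ determines the canonical spin$^{\mathbb C}$ structure, with $S^{\mathrm{can}}_+=\oplus_p\Lambda^{0,2p}$ and $S^{\mathrm{can}}_-=\oplus_p\Lambda^{0,2p+1}$. Concretely, $U(3)\subset SO(6)$ lifts to $\mathrm{Spin}^{\mathbb C}(6)$ via $A\mapsto(\tilde A,\det A^{1/2})$, or equivalently $w_2(M)=c_1(TM,J)\bmod 2$ lifts to an integral class. Either way $M$ is spin$^{\mathbb C}$.

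For the forward direction I would use the exceptional isomorphism $\mathrm{Spin}(6)\cong SU(4)$ from the preceding subsection. It gives $\mathrm{Spin}^{\mathbb C}(6)\cong (SU(4)\times U(1))/\pm1\cong U(4)$, so a spin$^{\mathbb C}$ structure is the same as a rank $4$ Hermitian vector bundle $S_+$ with $\Lambda^2_+(S_+)\otimes \det(S_+)^{-1/2}\cong TM$. Here the real structure on $\Lambda^2 S_+\otimes(\det S_+)^{-1/2}$ comes from the twisted Hodge star $\Lambda^2\times\Lambda^2\to\Lambda^4=\det$. This is the twisted version of the lemma $\Lambda^2_+(S_+)\cong TM$.

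The main step is to produce a nowhere vanishing section of $S_+$ (or of $S_-$). Since $S_+$ has real rank $8>6=\dim M$, a generic section misses zero by transversality. A unit spinor $\phi$ then has stabiliser $SU(3)\subset SU(4)$, acting on $\phi^\perp\cong\mathbb C^3$. In the twisted setting the stabiliser is $U(3)\subset U(4)$, acting on $\phi^\perp$ with $\langle\phi\rangle$ twisted by a line bundle. Hence the structure group of $TM$ reduces to $U(3)$, which is exactly an almost complex structure. Concretely, I would identify $TM\otimes\mathbb C\cong\Lambda^2 S_+\otimes(\det S_+)^{-1/2}$ and split it as $\big(\phi\wedge\phi^\perp\big)\oplus\Lambda^2\phi^\perp$, each of complex rank $3$. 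These two summands are interchanged by the real structure, so they serve as $T^{1,0}$ and $T^{0,1}$, and this defines $J$.

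The main obstacle is bookkeeping rather than analysis. I need to check that this $U(3)$ sits inside $\mathrm{Spin}^{\mathbb C}(6)$ compatibly, i.e.\ that its image in $SO(6)$ is a conjugate of the standard $U(3)$. I would verify this at the Lie algebra level: $\mathfrak{u}(3)\subset\mathfrak{u}(4)$ stabilising a line maps to a $9$-dimensional subalgebra of $\mathfrak{so}(6)$ commuting with the endomorphism induced by $i\phi\otimes\phi^*$ minus its trace. That endomorphism acts as $\pm i$ on the two summands above, hence squares to $-1$. As a sanity check, I would confirm that the resulting $J$ has canonical spin$^{\mathbb C}$ structure equal to the given one up to twisting by a line bundle, consistent with $S=S^{\mathrm{can}}\otimes\mathcal L$.
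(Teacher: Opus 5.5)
Your argument is correct and is essentially the paper's. In both, a nowhere-vanishing section of $S_+$ exists because its real rank $8$ exceeds $6$, and in dimension six such a spinor determines $J$. Your summand $\phi\wedge\phi^\perp$ is precisely the annihilator $\mathrm{Ann}(\phi)$ that the paper declares to be $T^{1,0}$, and your $SU(4)$-stabiliser computation replaces the paper's appeal to Chevalley's theorem that every nonzero spinor in dimension six is pure.

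One small correction: $(SU(4)\times U(1))/\pm 1$ is not $U(4)$ but the double cover $\{(B,w)\in U(4)\times U(1):\det B=w^2\}$. So $(\det S_+)^{1/2}$ is exactly the determinant line $L(S)$ supplied by the spin$^{\mathbb C}$ structure, which your formula already uses, and nothing in the argument breaks.
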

The first proof of this result implicitly appears in \cite{Charles}, which basically argues that in dimension $6$, the only obstruction of $M$ having an almost complex structure is the second Stiefel--Whitney class $w_2(TM)$ having an integral lift which is also the condition required for $M$ being spin$^\mathbb C$. Here we present a more geometric proof, which the author learnt from Claude LeBrun.
\begin{proof}
    Almost complex $\Rightarrow$ spin$^\mathbb C$ is a very well known fact. The key insight behind the proof of the other direction: spin$^\mathbb C\Rightarrow$ almost complex, is the existence of \textit{pure spinors} in dimension $6$. In dimension $6$, a pure spinor at a point $x\in M$ is a spinor $\psi\in S_{+}{|_x}$, whose annihilator under Clifford multiplication
    \begin{align*}
        \mathrm{Ann}(\psi_x):=\{v\in T_xM\otimes\mathbb C\,|\,\mathrm{cl}(v)\psi=0\}
    \end{align*}
    is a complex $3$ dimensional subspace of $T_xM\otimes \mathbb C$. Using this we can define an almost complex structure $J_x$ such that {Ann}$(\psi_x)$ is exactly the $(1,0)$ tangent space with respect to $J_x$. On the other hand, if we have a compatible almost complex structure $J_x$, one can think of the positive spin bundle as $S_+{|_x}=\oplus_p \Lambda_J^{0,2p}$. Remark \ref{remark Clifford action} tells us that the Clifford action of the $(1,0)$-forms on $\Omega^0$ is trivial. So $J_x$ determines a section of $\mathbb{CP}(S_+|_x)$. Hence we have a well-defined correspondence
    \begin{align*}
        J_x\longleftrightarrow \mathbb{CP}(S_+|_x)
    \end{align*}
    It's a fact that in dimension $6$, all non-zero spinors are pure spinors \cite{Chevalley}. Therefore to define a global almost complex structure $J$, all we need is a nowhere vanishing positive spinor, but it always exists because of dimensional reason since $S_+$ is a complex vector bundle of rank $4>3$.
\end{proof}
\subsection{A priori estimates}\label{estimates}
We prove some a priori estimates for solutions to the $6$-dimensional equations. The same techniques apply to all dimensions, we focus on dimension $6$ to be concrete. In dimension $6$, for $\phi\in\Gamma(S_+),A\in\mathcal A,\beta\in\mathrm{Re}(\Omega^{0,3})$, equations \eqref{Dirac}, \eqref{Curvature} read
\begin{align}
    (D_A+\mathrm{cl}(*\beta))\phi=0\label{Dirac 6d}\\
    \Pi\big(F_A-F_{A^{\mathrm{Ch}}}+2id^*\beta+rq(\psi_0)\big)=\Pi\big(q(\phi)\big)\label{Curvature 6d}
\end{align}
For $S=S^\mathrm{can}\otimes\mathcal{L},$ the associated complex line bundle is $L(S)=K^{-1}\otimes\mathcal{L}^2$. We write a unitary connection $A$ on $L(S)$ as $A=A^{\mathrm{Ch}}+2B$, where $A^\mathrm{Ch}$ is a unitary connection on $K^{-1}$ discussed in \S\ref{symplectic} and $B$ is a unitary connection on $\mathcal{L}$. Take $\phi=\phi^0+\phi^{0,2}\in\Omega^0(\mathcal L)\oplus\Omega^{0,2}(\mathcal L)$ and $\beta=\beta^{0,3}+\overline{\beta^{0,3}},\beta^{0,3}\in\Omega^{0,3}$.\par 
Before proceeding further, let's understand the term $\Pi(q(\phi))$ carefully. The matrix representation of Clifford action of $q(\phi)$ on $S_+=\Omega^0(\mathcal{L})\oplus\Omega^{0,2}(\mathcal{L})$ is:
\begin{align*}
    \begin{bmatrix}
    \frac{3|\phi^0|^2-|\phi^{0,2}|^2}{4}&\overline{\phi^{0,2}}\phi^0 \\\overline{\phi^0}\phi^{0,2}&\big(\langle\,,\phi^{0,2}\rangle-\frac{|\phi^{0,2}|^2}{3}\big)-\frac{3|\phi^0|^2-|\phi^{0,2}|^2}{12}
    \end{bmatrix}
\end{align*}
Notice that the term $\big(\langle\,,\phi^{0,2}\rangle-\frac{|\phi^{0,2}|^2}{3}\big)\in \Gamma\big(i\mathfrak{su}(\Lambda^{0,2}(\mathcal{L})\big)\cong i\mathrm{Re}(\Omega^{1,1}_p)$ (isomorphism \eqref{Clifford isom}). Hence the matrix representation of the Clifford action of $\Pi(q(\phi))$ is:
\begin{align*}
    \begin{bmatrix}
    \frac{3|\phi^0|^2-|\phi^{0,2}|^2}{4}&\overline{\phi^{0,2}}\phi^0 \\\overline{\phi^0}\phi^{0,2}&-\frac{3|\phi^0|^2-|\phi^{0,2}|^2}{12}
    \end{bmatrix}
\end{align*}
The diagonal entries are given by cl$\big(\frac{3|\phi^0|^2-|\phi^{0,2}|^2}{12}i\omega\big)$ (follows from lemma 2.3 in \cite{Partha}) and the off-diagonal entries are given by cl$\big(\frac{1}{2}\overline{\phi^0}\phi^{0,2}\big)$ (follows from lemma \ref{lemma 2}). Moreover we get
\begin{align*}
    &\big\langle \mathrm{cl}\big(\Pi(q(\phi)\big)\phi,\phi\big\rangle\\
    &=\big\langle \frac{3|\phi^0|^2-|\phi^{0,2}|^2}{4}\phi^0+|\phi^0|^2\phi^{0,2}+|\phi^{0,2}|^2\phi^0-\frac{3|\phi^0|^2-|\phi^{0,2}|^2}{12}\phi^{0,2},\phi^0+\phi^{0,2}\big\rangle\\
    &=\frac{1}{12}(9|\phi^0|^4+|\phi^{0,2}|^4+18|\phi^0|^2|\phi^{0,2}|^2)\\
    &\geq \frac{1}{12}(|\phi^0|^4+|\phi^{0,2}|^4+2|\phi^0|^2|\phi^{0,2}|^2)\\
    &=\frac{1}{12}|\phi|^4
\end{align*}
\begin{lemma}
There exists a constant $C>0$ depending only on $r$ and the geometry of $M$ such that if $(\phi,A,\beta)$ solves equations \eqref{Dirac 6d}, \eqref{Curvature 6d} then
\begin{align*}
    \|\phi\|_{C^0}^2\leq C(1+\|(F_A)^{1,1}_p\|_{C^0}+\|\beta\|_{C^0}^2)
\end{align*}
\end{lemma}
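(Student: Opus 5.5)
This is the standard Seiberg--Witten $C^0$ argument: apply the Weitzenb\"ock formula \eqref{Weitzenbock} with $Q(\beta)=-2|\beta|^2$ from \eqref{Weitzenbock 6}, then evaluate at a maximum of $|\phi|^2$.

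\textbf{Step 1: a differential inequality.} Since $D_{A,\beta}\phi=0$, pairing \eqref{Weitzenbock} with $\phi$ gives
\begin{align*}
0=\mathrm{Re}\langle\nabla_{A,\beta}^*\nabla_{A,\beta}\phi,\phi\rangle+\frac{s}{4}|\phi|^2+\frac12\langle\mathrm{cl}(F_A+F_\beta)\phi,\phi\rangle-2|\beta|^2|\phi|^2 .
\end{align*}
Because $\nabla_{A,\beta}$ is unitary, $\mathrm{Re}\langle\nabla^*\nabla\phi,\phi\rangle=\frac12\Delta|\phi|^2+|\nabla\phi|^2$ with $\Delta=d^*d\geq0$ as an operator. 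At a maximum point $x_0$ of $|\phi|^2$ we have $\Delta|\phi|^2\geq0$. Hence at $x_0$
\begin{align*}
\frac12\langle\mathrm{cl}(F_A+F_\beta)\phi,\phi\rangle\leq\Big(2|\beta|^2-\frac s4\Big)|\phi|^2 .
\end{align*}

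\textbf{Step 2: decompose the curvature term by type.} In dimension six $F_\beta=2i\,d^*\beta$ is a $2$-form. Split $\Lambda^2$ as $\mathrm{Re}(\Lambda^0)\omega\oplus\mathrm{Re}(\Lambda^{0,2})\oplus\mathrm{Re}(\Lambda^{1,1}_p)$. The projection $\Pi$ retains exactly the first two summands, so the curvature equation \eqref{Curvature 6d} gives
\begin{align*}
\Pi(F_A+F_\beta)=\Pi(q(\phi))+\Pi(F_{A^{\mathrm{Ch}}})-r\Pi(q(\psi_0)).
\end{align*}
The remaining part of $F_A+F_\beta$ is its $\Lambda^{1,1}_p$ component, $(F_A)^{1,1}_p+(F_\beta)^{1,1}_p$. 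We have
\begin{align*}
\langle\mathrm{cl}(\Pi q(\phi))\phi,\phi\rangle\geq\tfrac1{12}|\phi|^4
\end{align*}
by the computation just before the statement. The terms $\Pi(F_{A^{\mathrm{Ch}}})$ and $r\Pi(q(\psi_0))=\frac{ir}{2}\omega$ (using $c_3=3/4$) are fixed forms bounded by a constant depending on $r$ and the geometry. Each therefore contributes at most $C|\phi|^2$.

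\textbf{Step 3: the obstacle.} The term $(F_\beta)^{1,1}_p=2i(d^*\beta)^{1,1}_p$ involves a derivative of $\beta$, which the stated bound does not allow. The point to establish is that this term vanishes, or is zeroth order in $\beta$. For $\beta=\beta^{0,3}+\overline{\beta^{0,3}}$ we have
\begin{align*}
d^*\beta^{0,3}=\bar\partial^*\beta^{0,3}+(\text{Nijenhuis terms}),
\end{align*}
where $\bar\partial^*\beta^{0,3}$ has type $(0,2)$ and its conjugate has type $(2,0)$. The only $(1,1)$ contributions come from $\bar N^*$, which is an algebraic term of order zero. So $|(F_\beta)^{1,1}_p|\leq C|\beta|$, with $C=0$ in the integrable case.

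I expect the care needed here to lie in the Nijenhuis bookkeeping and in confirming that $\Pi$ removes all derivative terms of $\beta$. The alternative is that the $(1,1)_p$ part of $F_\beta$ is absent by construction; this should be checked against the Clifford-matrix description of $\Pi$ given above.

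\textbf{Step 4: conclude.} At $x_0$ the above yields
\begin{align*}
\tfrac1{24}|\phi|^4\leq C\big(1+|\beta|^2+|(F_A)^{1,1}_p|+|\beta|\big)|\phi|^2 .
\end{align*}
Dividing by $|\phi|^2$ (the bound is trivial if $\phi(x_0)=0$) and absorbing $|\beta|\leq1+|\beta|^2$ gives
\begin{align*}
\|\phi\|^2_{C^0}\leq C\big(1+\|(F_A)^{1,1}_p\|_{C^0}+\|\beta\|^2_{C^0}\big).
\end{align*}
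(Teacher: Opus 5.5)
Your proposal is correct and follows the paper's proof essentially step for step. Both arguments use the Weitzenb\"ock formula with $Q(\beta)=-2|\beta|^2$, substitute the curvature equation for the $\Pi$-component, invoke the bound $\langle\mathrm{cl}(\Pi q(\phi))\phi,\phi\rangle\geq\frac1{12}|\phi|^4$, bound $(d^*\beta)^{1,1}$ by $C|\beta|$ as a zeroth-order Nijenhuis term, and finish with the maximum principle. There is one harmless slip: $r\Pi(q(\psi_0))=\frac{ir}{4}\omega$ (since $c_3/3=\frac14$), not $\frac{ir}{2}\omega$. Also, your Step 3 computation already settles the question, so the hedge about the $(1,1)_p$ part being ``absent by construction'' is unnecessary --- it is genuinely present when $J$ is non-integrable.
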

\begin{proof}
Say $(\phi,A,\beta)$ solves equations \eqref{Dirac 6d}, \eqref{Curvature 6d}. We start with the identity 
\begin{align*}
    \frac{1}{2}\Delta|\phi|^2=\langle\nabla_{A,\beta}^*\nabla_{A,\beta}\phi,\phi\rangle-|\nabla_{A,\beta}\phi|^2
\end{align*}
Here the Laplacian $\Delta=d^*d$ is the non-negative Laplacian on functions. We complement the identity with the Weitzenbo\"ck formula \eqref{Weitzenbock}, \eqref{Weitzenbock 6}:
\begin{align*}
    D_{A,\beta}^2=\nabla_{A,\beta}^*\nabla_{A,\beta}+\frac{s}{4}+\frac{1}{2}\mathrm{cl}(F_A+2id^*\beta)-2|\beta|^2
\end{align*}
to get
\begin{align}
    &\frac{1}{2}\Delta|\phi|^2\\&\leq-\frac{s}{4}|\phi|^2-\frac{1}{2}\langle\mathrm{cl}(F_A+2id^*\beta)\phi,\phi\rangle+2|\beta|^2|\phi|^2\nonumber\\
    &=\Big(-\frac{s}{4}+2|\beta|^2\Big)|\phi|^2-\frac{1}{2}\langle\mathrm{cl}(\Pi(q(\phi)))\phi,\phi\rangle\\
    &\hspace{0.6 cm}-\frac{1}{2}\langle \mathrm{cl}((\Pi(F_{A^{\mathrm{Ch}}})-rq(\psi_0)
    +(F_A+2id^*\beta)^{1,1}_p)\phi,\phi\rangle\label{eq 1k}\\
    &\leq-\frac{s}{4}|\phi|^2-C|\phi|^4+2|\beta|^2|\phi|^2+C|\phi|^2-\frac{1}{2}\langle\mathrm{cl}((F_A)^{1,1}_p+(2id^*\beta)^{1,1}_p)\phi,\phi\rangle\nonumber
\end{align}
Since $\beta\in$ Re$(\Omega^{0,3}),|(d^*\beta)^{1,1}|=|N\circ \beta|\leq C|\beta|$ where $N$ is the Nijenhuis tensor. Thereafter, we get
\begin{align*}
    \frac{1}{2}\Delta|\phi|^2\leq -\frac{s}{4}|\phi|^2-C|\phi|^4+C|\phi|^2+C|\beta|^2|\phi|^2+C(\|(F_A)^{1,1}_p\|_{C^0})|\phi|^2
\end{align*}
Now the maximum principle deduces the result. 
\end{proof}
Next we prove some more a priori estimates assuming that $(M,g,J)$ is almost K\"ahler. Lemma $2.1$ in \cite{Partha} tells us that the Clifford action of $(3,0)$-forms on positive spinors is trivial. Moreover $*\beta^{0,3}=i\beta^{0,3}$. Remark \ref{remark Clifford action} says that cl$(\beta^{0,3})\phi^{0,2}=0$. Hence we get cl$(*\beta)\phi=i\mathrm{cl}(\beta^{0,3})\phi^0\in\Omega^{0,3}(\mathcal{L})$, which we can compute using lemma \ref{lemma 2}. In light of the preceding discussion, on an almost K\"ahler $3$-fold $(M,g,J)$, equations \eqref{Dirac 6d} and \eqref{Curvature 6d} take the form:
\begin{align}
    \bar\partial_B \phi^0+\bar\partial_B^* \phi^{0,2}=0\label{Dirac 6 01}\\
    \bar\partial_B\phi^{0,2}+2i\phi^0\beta^{0,3}=0 \label{Dirac 6 03}\\
    \Lambda (F_B)=\frac{i}{8}(3|\phi^0|^2-|\phi^{0,2}|^2-3r)\label{Curvqture 6 1}\\
    F^{0,2}_B+i\bar\partial^*\beta^{0,3}=\frac{\overline{\phi^0}\phi^{0,2}}{4}\label{Curvqture 6 2}
\end{align}
Equation \eqref{Dirac 6 01} represents the $(0,1)$-component of the Dirac equation and \eqref{Dirac 6 03} represents the $(0,3)-$component of the Dirac equation. Equations \eqref{Curvqture 6 1}, \eqref{Curvqture 6 2} represent the curvature equation. A priori, $\Lambda(id^*\beta)$ should appear in equation \eqref{Curvqture 6 1}. However since $\beta\in$Re$(\Omega^{0,3}),\Lambda(d^*\beta)=0$. To see this take an arbitrary function $f\in C^\infty(M).$ Notice 
\begin{align*}
    \int f\Lambda(d^*\beta)dv=\int \langle d^*\beta,f\omega\rangle dv=\int \langle\beta,df\wedge\omega\rangle dv=0
\end{align*}
The last equality happens because $\beta\in\Omega^{0,3}\oplus\Omega^{3,0}$ and $df\wedge\omega\in \Omega^{1,2}\oplus\Omega^{2,1}$.\par 
Notice the last lemma has the term $(F_A)^{1,1}_p$ in the estimate, however this term does not appear in the equations \eqref{Dirac}, \eqref{Curvature}. We would like to prove an estimate without this term; to do this, first we prove a small lemma.
\begin{lemma} \label{norm} On an almost K\"ahler $3$-fold, write
\begin{align*}
    F_B=\frac{1}{3}\Lambda (F_B)\omega+F_B^{2,0}+F_B^{0,2}+(F_B)^{1,1}_p
\end{align*}
Then we have
\begin{align}
    \langle c_1(\mathcal L)^2\cup [\omega],[M]\rangle=\frac{1}{4\pi^2} \Big(\frac{2}{3}\|\Lambda (F_B)\|^2_{L^2}+\|F_B^{2,0}\|^2_{L^2}+\|F_B^{0,2}\|^2_{L^2}-\|(F_B)^{1,1}_p\|^2_{L^2}\Big)
\end{align}
\end{lemma}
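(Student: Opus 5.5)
Since $F_B$ is closed and $\frac{i}{2\pi}F_B$ represents $c_1(\mathcal L)$, and $\omega$ is closed (almost K\"ahler), Chern--Weil gives
\begin{align*}
\langle c_1(\mathcal L)^2\cup[\omega],[M]\rangle=-\frac{1}{4\pi^2}\int_M F_B\wedge F_B\wedge\omega .
\end{align*}
The proof therefore reduces to a pointwise algebraic identity expressing $F\wedge F\wedge\omega$, for a real two form $F$ on a $6$-dimensional Hermitian vector space, in terms of the norms of its type components. I will then integrate this identity; no derivatives of $\omega$ are needed beyond $d\omega=0$, which enters only to make $[\omega]$ a cohomology class.

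For the pointwise step I use the decomposition $F=\frac13\Lambda(F)\omega+F^{2,0}+F^{0,2}+F^{1,1}_p$ from the statement. The summands $\omega$, $\mathrm{Re}(\Lambda^{2,0}\oplus\Lambda^{0,2})$ and $\Lambda^{1,1}_p$ are $U(3)$-irreducible and mutually inequivalent. The quadratic form $F\mapsto *(F\wedge F\wedge\omega)$ is $U(3)$-invariant, so by Schur's lemma it is diagonal in this decomposition, with a scalar multiple of the norm on each piece. Equivalently, one can use the Lefschetz description of $L=\omega\wedge$ on $\Lambda^2$ in complex dimension $3$: $*(\alpha\wedge\omega)=-\alpha$ for primitive $(1,1)$ forms $\alpha$, $*(\alpha\wedge\omega)=\alpha$ for real $(2,0)+(0,2)$ forms $\alpha$, and $*\omega=\omega^2/2$.

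The three constants come from one computation each:
\begin{itemize}
\item $\omega\wedge\omega\wedge\omega=6\,dv$ and $|\omega|^2=3$, so the $\omega$-part contributes $\frac19\Lambda(F)^2\cdot 6=\frac23|\Lambda F|^2$ times $dv$.
\item Testing $F=dx_1\wedge dy_1-dx_2\wedge dy_2$ gives $F\wedge F\wedge\omega=-2\,dx_1dy_1dx_2dy_2\wedge dx_3dy_3=-2\,dv$, while $|F|^2=2$. Hence the primitive $(1,1)$ part contributes $-|F^{1,1}_p|^2$.
\item Testing $F=dx_1\wedge dx_2-dy_1\wedge dy_2=\mathrm{Re}(dz_1\wedge dz_2)$ gives $F\wedge F\wedge\omega=-2\,dx_1dx_2dy_1dy_2\wedge dx_3dy_3=+2\,dv$, with $|F|^2=2$. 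Hence this piece contributes $+(|F^{2,0}|^2+|F^{0,2}|^2)$, where the norms are normalised so that the real $(2,0)+(0,2)$ part has norm squared $|F^{2,0}|^2+|F^{0,2}|^2$.
\end{itemize}

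For the imaginary form $F_B=iF$ the wedge picks up a factor $-1$, so $-F_B\wedge F_B\wedge\omega=F\wedge F\wedge\omega$. Dividing by $4\pi^2$ and integrating then gives exactly the stated formula.

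The main obstacle I expect is bookkeeping of signs and normalisations. Specifically: the sign convention $c_1=[\frac{i}{2\pi}F_B]$; whether $|F^{2,0}|^2$ means the norm of the component itself or of the real combination; and orientation conventions such as $\omega^3/3!=dv$. I would fix these by checking the identity on the explicit test forms above before integrating.
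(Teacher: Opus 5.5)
Your proposal is correct and is essentially the paper's proof. The paper writes $F_B\wedge F_B\wedge\omega=F_B\wedge *^2(F_B\wedge\omega)$, uses the eigenvalues of $\xi\mapsto *(\xi\wedge\omega)$ ($2$ on $\omega$, $1$ on $(2,0)$ and $(0,2)$ forms, $-1$ on primitive $(1,1)$ forms) together with $\overline{F_B}=-F_B$, and so turns $\int F_B\wedge F_B\wedge\omega$ into the $L^2$ norms. This is exactly your ``equivalent'' Lefschetz route, and your Schur argument with test forms simply verifies those eigenvalues; your signs and normalisations all check out.
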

\begin{proof}
 On $\Omega^2,\xi\mapsto*(\xi\wedge\omega)$ is an isomorphism between $2$-forms that respects the splitting of $(p,q)$-forms. $(2,0)$ and $(0,2)$ forms are eigen-spaces of eigenvalue $1,$ span$(\omega)$ is an eigen-space of eigenvalue $2,$ $\Omega_p^{1,1}$ is an eigen-space of eigenvalue $-1$. Thereafter,
\begin{align*}
    \langle c_1(\mathcal L)^2\cup [\omega],[M]\rangle&=\Big(\frac{i}{2\pi}\Big)^2\int F_B\wedge F_B\wedge \omega\\
    &=\Big(\frac{i}{2\pi}\Big)^2\int F_B\wedge *^2(F_B\wedge \omega)\\
    &=\Big(\frac{i}{2\pi}\Big)^2 \int F_B\wedge *\Big(\frac{2}{3}\Lambda(F_B)\omega+F_B^{2,0}+F_B^{0,2}-(F_B)^{1,1}_p\Big)\\
    &=-\Big(\frac{i}{2\pi}\Big)^2 \int F_B\wedge *\Big(\overline{\frac{2}{3}\Lambda(F_B)\omega+F_B^{2,0}+F_B^{0,2}-(F_B)^{1,1}_p}\Big)\\
    &=\frac{1}{4\pi^2}\Big(\frac{2}{3}\|\Lambda (F_B)\|^2_{L^2}+\|F_B^{2,0}\|^2_{L^2}+\|F_B^{0,2}\|^2_{L^2}-\|(F_B)^{1,1}_p\|^2_{L^2}\Big) dv\qedhere
\end{align*}
\end{proof}
\begin{proposition}
There exists a constant $C>0$ such that if $(\phi=\phi^0+\phi^{0,2},A,\beta)$ solves the equations \eqref{Dirac 6d}, \eqref{Curvature 6d} on an almost K\"ahler $3$-fold $(M,g,J)$, then we have
\begin{align}
    \int |\phi^0|^4 dv\leq C\int(|\phi^{0,2}|^4+|\beta|^4+r^2) dv
\end{align}
\end{proposition}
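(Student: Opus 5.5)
The idea is to use the topological identity of Lemma \ref{norm} together with the curvature equations \eqref{Curvqture 6 1}, \eqref{Curvqture 6 2}. These let us express $\|\Lambda(F_B)\|_{L^2}^2$ in terms of the other curvature components and a topological constant. The key difficulty is the negative sign in front of $\|(F_B)^{1,1}_p\|^2_{L^2}$ in Lemma \ref{norm}, since that component is not controlled by the equations. So instead of Lemma \ref{norm} alone, I would combine it with the Chern--Weil identity for $c_1(\mathcal L)\cup[\omega]^2$. Since $\omega$ is closed, that identity gives $\int \Lambda(F_B)\,dv$ as a topological constant, namely a multiple of $\deg(\mathcal L)$.

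Step 1: take the $L^2$-norm of \eqref{Curvqture 6 1}:
\begin{align*}
\tfrac{1}{64}\int (3|\phi^0|^2-|\phi^{0,2}|^2-3r)^2dv=\|\Lambda(F_B)\|_{L^2}^2 .
\end{align*}
Expanding the left side gives $\tfrac{9}{64}\int|\phi^0|^4$ plus cross terms. The cross terms involving $|\phi^0|^2|\phi^{0,2}|^2$ and $r|\phi^0|^2$ are absorbed by Young's inequality, $\epsilon|\phi^0|^4+C_\epsilon(|\phi^{0,2}|^4+r^2)$. This reduces the proposition to an upper bound $\|\Lambda(F_B)\|^2_{L^2}\le \epsilon\int|\phi^0|^4+C\int(|\phi^{0,2}|^4+|\beta|^4+r^2)$.

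Step 2: bound $\|\Lambda F_B\|^2$ without a bare $L^2$ norm of curvature. Pointwise \eqref{Curvqture 6 1} already determines $\Lambda F_B$, so I would instead use the Weitzenb\"ock/integration argument. Pair the Dirac equations \eqref{Dirac 6 01}, \eqref{Dirac 6 03} with $\phi^0$ and integrate: $\|\bar\partial_B\phi^0\|^2=\|\bar\partial_B^*\phi^{0,2}\|^2$. On an almost K\"ahler manifold the Kähler identity for sections of $\mathcal L$ gives $\|\bar\partial_B\phi^0\|^2=\tfrac12\|\nabla_B\phi^0\|^2+\int\langle i\Lambda F_B\phi^0,\phi^0\rangle$ up to constants and zeroth-order (Nijenhuis) terms. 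Substituting \eqref{Curvqture 6 1} turns $\int\langle i\Lambda F_B\phi^0,\phi^0\rangle$ into $-\tfrac18\int(3|\phi^0|^4-|\phi^0|^2|\phi^{0,2}|^2-3r|\phi^0|^2)$, which has a good sign for $|\phi^0|^4$. This yields
\begin{align*}
\tfrac38\int|\phi^0|^4\le \|\bar\partial_B^*\phi^{0,2}\|^2_{L^2}+\int\big(\tfrac18|\phi^0|^2|\phi^{0,2}|^2+\tfrac{3r}{8}|\phi^0|^2\big).
\end{align*}

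Step 3 controls $\|\bar\partial_B^*\phi^{0,2}\|^2$, and this is where I expect the main obstacle. The plan is a Weitzenb\"ock formula on $\Omega^{0,2}(\mathcal L)$:
\begin{align*}
\|\bar\partial_B\phi^{0,2}\|^2+\|\bar\partial_B^*\phi^{0,2}\|^2=\tfrac12\|\nabla\phi^{0,2}\|^2+\int\langle\mathcal R(F_B)\phi^{0,2},\phi^{0,2}\rangle+\text{lower order}.
\end{align*}
Its curvature term involves $\Lambda F_B$, which is handled by \eqref{Curvqture 6 1} and contributes $|\phi^0|^2|\phi^{0,2}|^2$ and $|\phi^{0,2}|^4$ terms. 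Unfortunately it also involves $(F_B)^{1,1}_p$. Rather than that route, I would first try the identity $\bar\partial_B^*\phi^{0,2}=-\bar\partial_B\phi^0$ together with \eqref{Dirac 6 03}, $\bar\partial_B\phi^{0,2}=-2i\phi^0\beta^{0,3}$. Applying $\bar\partial_B$ to \eqref{Dirac 6 01} and using $\bar\partial_B^2\phi^0=F_B^{0,2}\phi^0-\bar N\partial_B\phi^0$ with \eqref{Curvqture 6 2} gives
\begin{align*}
\bar\partial_B\bar\partial_B^*\phi^{0,2}=-\tfrac14|\phi^0|^2\phi^{0,2}+i\bar\partial^*\beta^{0,3}\phi^0+\bar N\partial_B\phi^0 .
\end{align*}
Pairing this with $\phi^{0,2}$ and integrating gives $\|\bar\partial_B^*\phi^{0,2}\|^2=-\tfrac14\int|\phi^0|^2|\phi^{0,2}|^2+\dots$. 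The $\beta$ term is integrated by parts onto $\beta\,\bar\partial_B\phi^0$ and $\beta\,\phi^0\cdot\bar\partial_B^*$-terms, and then controlled by Young's inequality: $\epsilon\|\bar\partial_B\phi^0\|^2+\epsilon\int|\phi^0|^4+C\int(|\beta|^4+|\phi^{0,2}|^4)$. The Nijenhuis term needs $\|\partial_B\phi^0\|$, which is bounded by $\|\bar\partial_B\phi^0\|$ plus $\int\Lambda F_B|\phi^0|^2$ and hence by the same quantities.

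Assembling Steps 1--3 and choosing $\epsilon$ small absorbs all $|\phi^0|^4$ and derivative terms on the left. This gives $\int|\phi^0|^4\le C\int(|\phi^{0,2}|^4+|\beta|^4+r^2)$, with the $r|\phi^0|^2$ term handled by Young's inequality.
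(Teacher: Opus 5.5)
Your proof works, but it takes a genuinely different route from the paper's. The paper integrates the Weitzenb\"ock identity \eqref{start} for the full spinor. The negative quartic term comes from $\langle\mathrm{cl}(\Pi(q(\phi)))\phi,\phi\rangle$. The $-2|\beta|^2|\phi|^2$ term is absorbed using $D_{A,\beta}\phi=0$ and $|D_{\tilde A}\phi|^2\le 6|\nabla_{\tilde A}\phi|^2$. The one term the equations do not control, $\mathrm{cl}((F_A)^{1,1}_p)$ acting on $\phi^{0,2}$, is bounded using Lemma \ref{norm}, the curvature equations \eqref{Curvqture 6 1}, \eqref{Curvqture 6 2}, and the Bianchi identity $\bar\partial F_B^{0,2}=-\bar N\circ(F_B)^{1,1}_p$ for the cross term with $\bar\partial^*\beta^{0,3}$.

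You instead work only with the $\Omega^0(\mathcal L)$ component and the identity $\|\partial_B\phi^0\|^2-\|\bar\partial_B\phi^0\|^2=\int i\Lambda(F_B)|\phi^0|^2\,dv$, which needs only $d\omega=0$. So the only curvature components that ever appear are $\Lambda F_B$ and $F_B^{0,2}$, both prescribed by the equations. The component $(F_B)^{1,1}_p$, Lemma \ref{norm} and the topological term never enter. This is essentially the computation \eqref{32} that the paper uses only later, for Theorem \ref{theorem2 symplectic}. Your route is shorter, its constant does not depend on $\langle c_1(\mathcal L)^2\cup[\omega],[M]\rangle$, and it gives an $L^2$ bound on $\nabla_B\phi^0$ for free. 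The paper's route stays within the Weitzenb\"ock framework used for the $C^0$ estimate and produces the bound \eqref{eq 28} on $\|(F_B)^{1,1}_p\|_{L^2}$.

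Some details need fixing.

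\textbf{Step 1.} Step 1 and the Chern--Weil remark play no role and can be dropped. Since \eqref{Curvqture 6 1} fixes $\Lambda F_B$ pointwise, Step 1 just restates the claim.

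\textbf{Step 2.} The identity you quote has the wrong sign. The correct one is $\|\bar\partial_B\phi^0\|^2=\frac12\|\nabla_B\phi^0\|^2-\frac12\int i\Lambda(F_B)|\phi^0|^2\,dv$, which is the paper's $2\bar\partial_B^*\bar\partial_B=\nabla_B^*\nabla_B-i\Lambda F_B$. With your sign, $\int|\phi^0|^4$ would only be bounded by the uncontrolled $\|\nabla_B\phi^0\|^2$. Your displayed inequality is exactly what the correct sign gives.

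\textbf{Step 3.} Moving $\bar\partial^*$ from $\beta^{0,3}$ onto $\overline{\phi^0}\phi^{0,2}$ produces two terms.
\begin{itemize}
\item The term with $\overline{\phi^0}\,\bar\partial_B\phi^{0,2}$ contributes, by \eqref{Dirac 6 03}, exactly $-2\int|\phi^0|^2|\beta^{0,3}|^2\,dv$. This has a good sign, so no Young's inequality is needed there.
\item The other term pairs $\beta^{0,3}$ with $\phi^{0,2}\wedge\overline{\partial_B\phi^0}$. So the $(1,0)$-derivative $\partial_B\phi^0$ appears, not $\bar\partial_B\phi^0$. This is harmless: it is controlled by the same bound $\|\partial_B\phi^0\|^2\le\|\bar\partial_B\phi^0\|^2+\frac18\int(|\phi^0|^2|\phi^{0,2}|^2+3r|\phi^0|^2)\,dv$ that you already use for the Nijenhuis term.
\end{itemize}

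\textbf{Constant.} The Nijenhuis term leaves $C\int|\phi^{0,2}|^2$. This is bounded by $C\int(|\phi^{0,2}|^4+r^2)$ only for $r$ bounded below; otherwise an extra $C\,\mathrm{vol}(M)$ appears. The paper's argument has the same feature.
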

\begin{proof}
Equation \eqref{eq 1k} says
\begin{align}
    \frac{1}{2}\Delta|\phi|^2=&\Big(-\frac{s}{4}+2|\beta|^2\Big)|\phi|^2-\frac{1}{2}\langle\Pi(q(\phi))\phi,\phi\rangle\nonumber\\
    &-\frac{1}{2}\langle \mathrm{cl}(\Pi(F_{A^{\mathrm{Ch}}})-rq(\psi_0)+(F_A+2id^*\beta)^{1,1}_p)\phi,\phi\rangle-|\nabla_{A,\beta}\phi|^2\label{start}
\end{align}
We compute and estimate some of the terms appearing above individually.
\begin{comment}
First recall Blair's calculation \cite{Blair} of $\Lambda(iF_{A^{\mathrm{Ch}}})=\frac{s}{2}+\frac{1}{8}|\nabla J|^2$. Integrating this, one gets Blair's beautiful formula
\begin{align*}
    \int \Big(s+\frac{1}{4}|\nabla J|^2\Big) dv=\frac{4\pi}{(m-1)!}\langle c_1(M,J),[\omega]^{m-1}\rangle
\end{align*} 
on an almost K\"ahler manifold of dimension $2n=m$. Therefore the only term in $\mathrm{cl}(F_{A^{\mathrm{Ch}}})$ which contributes to the inner product is $-\frac{i}{2\times 3}\big(s+\frac{1}{4}|\nabla J|^2\big)\omega$. We get
\begin{align}
    &-\frac{s}{4}|\phi|^2-\frac{1}{2}\langle\mathrm{cl}(F_{A^{\mathrm{Ch}}})\phi,\phi\rangle\nonumber
    \\&=-\frac{s}{4}|\phi|^2+\frac{1}{12}\big(s+\frac{1}{4}|\nabla J|^2\big)\langle\mathrm{cl}(i\omega)\phi,\phi\rangle\nonumber\\
    &=-\frac{s}{4}|\phi|^2+\frac{1}{12}\big(s+\frac{1}{4}|\nabla J|^2\big)\langle 3\phi^0-\phi^{0,2},\phi^0+\phi^{0,2}\rangle\qquad[\text{lemma 2.3 from \cite{Partha} is used}]\nonumber\\
    &=\frac{1}{16}|\nabla J|^2|\phi^0|^2-\frac{1}{3}\big(s+\frac{1}{16}|\nabla J|^2\big)|\phi^{0,2}|^2\label{eq 21}
\end{align}
\end{comment}
\begin{align}
    \frac{r}{2}\langle \mathrm{cl}(q(\psi_0))\phi,\phi\rangle&= \frac{r}{8}\langle 3\phi^0-\phi^{0,2},\phi^0+\phi^{0,2}\rangle\nonumber\\
    &=\frac{r}{8}(3|\phi^0|^2-|\phi^{0,2}|^2)
\end{align}
\begin{align}
    &-\frac{1}{2}\langle\mathrm{cl}(2id^*\beta)^{1,1}_p\phi,\phi\rangle\nonumber\\
    &=-\frac{1}{2}\langle\mathrm{cl}(2id^*\beta)^{1,1}_p\phi^{0,2},\phi^{0,2}\rangle\qquad[\text{identity \eqref{Clifford isom} is used}]\nonumber\\
    &\leq C|\beta|^2|\phi^{0,2}|^2\qquad[\text{since }(d^*\beta)^{1,1}=N\circ \beta]
\end{align}
Next we estimate $-|\nabla_{A,\beta}\phi|^2$. First we recall the definition of $\nabla_{\tilde{A}}:=\nabla_{A,\beta}$ from \cite{Fine}. Choose a local coframe $e_j$ which is stationary at a point $x\in M$ with respect to the Levi-Civita connection. We write $\nabla_j$ for the corresponding $\nabla_{\tilde A}$ derivative in the direction dual to $e_j$. We have 
\begin{align*}
    \nabla_{\tilde A}=\nabla_{A}-\frac{1}{2}\sum_j e_j\otimes(\mathrm{cl}(e_j)\, \mathrm{cl}(\tilde\beta)+\mathrm{cl}(\tilde\beta)^*\, \mathrm{cl}(e_j))
\end{align*}
Notice $\mathrm{cl}(\tilde\beta)=\mathrm{cl}(\tilde\beta^*)$ and hence we get
\begin{align*}
    D_{\tilde A}&=\mathrm{cl}\circ\nabla_{\tilde A}\nonumber\\
    &=\mathrm{cl}\circ\nonumber \nabla_A-\frac{1}{2}\sum_j\big(\mathrm{cl}(e_j)^2\,\mathrm{cl}(\tilde\beta)+\mathrm{cl}(e_j)\,\mathrm{cl}(\tilde\beta)\,\mathrm{cl}(e_j)\big)\nonumber\\
    &=D_A+3\mathrm{cl}(\tilde\beta)\qquad[\text{lemma 4.11 in \cite{Fine} says }\sum_j\mathrm{cl}(e_j)\,\mathrm{cl}(\tilde\beta)\,\mathrm{cl}(e_j)=0]
\end{align*}
Since $D_A+\mathrm{cl}(\tilde\beta)=0$, we have $D_{\tilde A}\phi=2\mathrm{cl}(\tilde\beta)\phi$. We get
\begin{align}
     |D_{\tilde{A}}\phi|^2 = \big|\sum_j \mathrm{cl}(e_j)\nabla_j\phi\big|^2\leq 6\sum_j |\mathrm{cl}(e_j)|^2 |\nabla_j\phi|^2=6|\nabla_{\tilde{A}}\phi|^2
\end{align}
Thereafter we have
\begin{align}
    2|\beta|^2|\phi|^2-|\nabla_{A,\beta}\phi|^2&\leq 2|\beta|^2|\phi|^2-\frac{4}{6}|\mathrm{cl}(\tilde\beta)\phi|^2\nonumber\\
    &=2|\beta|^2|\phi|^2-\frac{4}{6}|(\sqrt 2)^3\beta^{0,3}\phi^0|^2\nonumber\\
    &=2|\beta|^2|\phi|^2-\frac{8}{3}|\beta|^2|\phi^0|^2\nonumber\\
    &\leq 2|\beta|^2|\phi^{0,2}|^2
\end{align}
Integrating \eqref{start} over $M$ and using all the computations above we get
\begin{align}
    0\leq&\int\Big(C(|\phi^0|^2+|\phi^{0,2}|^2)+\frac{r}{8}(3|\phi^0|^2-|\phi^{0,2}|^2)+C|\beta|^2|\phi^{0,2}|^2\nonumber\\
    &\hspace{2 ex} -\frac{1}{24}\big(9|\phi^0|^4+|\phi^{0,2}|^4+18|\phi^0|^2|\phi^{0,2}|^2\big)-\frac{1}{2}\langle\mathrm{cl}(F_A)^{1,1}_p\phi^{0,2},\phi^{0,2}\rangle\Big)dv\label{eq start}
\end{align}
We estimate 
\begin{align}
    &\int \Big(-\frac{1}{2}\langle\mathrm{cl}(F_A)^{1,1}_p\phi^{0,2},\phi^{0,2}\rangle\Big)dv\nonumber\\
    &\leq \frac{1}{2}\int\Big(|\mathrm{cl}(F_A)^{1,1}_p|\,|\phi^{0,2}\otimes(\phi^{0,2})^*|\Big)dv\nonumber\\
    &=\int |(F_A)^{1,1}_p|\,|\phi^{0,2}|^2 dv\qquad[\text{since for a two form, } |\mathrm{cl}(\theta)|^2=4|\theta|^2]\nonumber\\
    &\leq 2\int |(F_B)^{1,1}_p|\,|\phi^{0,2}|^2 dv+C\int|\phi^{0,2}|^2 dv\qquad[\text{since }F_A=2F_B+F_{A^{\mathrm{Ch}}}]\nonumber\\
    &=\frac{1}{\epsilon}\int |(F_B)^{1,1}_p|^2 dv+{\epsilon}\int |\phi^{0,2}|^4 dv+C\int|\phi^{0,2}|^2 dv\qquad[\text{Peter-Paul}]\label{eq 25}
\end{align}
Next we estimate $\int |F_B^{0,2}|^2 dv$. Notice that equation \eqref{Curvqture 6 2} gives us 
\begin{align}
    \int (|F_B^{0,2}|^2+|\bar\partial^*\beta^{0,3}|^2+2\mathrm{Re}\langle F_B^{0,2},i\bar\partial^*\beta^{0,3}\rangle) dv=\frac{1}{16}\int |\phi^0|^2|\phi^{0,2}|^2\label{eq 26}
\end{align}
The $(0,3)$ part of $dF_B=0$ is
\begin{align*}
    \bar\partial F_B+\bar{N}\circ F_B^{1,1}=0
\end{align*}
We have $F_B^{1,1}=\frac{1}{3}\Lambda(F_B)\omega+ (F_B)^{1,1}_p$. Since $d\omega=0$, its $(0,3)$-component gives $\bar{N}\circ\omega=0$. Hence, 
\begin{align*}
    \bar{N}\circ F_B^{1,1}=\bar{N}\circ (F_B)^{1,1}_p
\end{align*}
Thereafter we get
\begin{align}
    \int\langle F_B^{0,2},i\bar\partial^*\beta^{0,3}\rangle dv=\int\langle \bar\partial F_B^{0,2},i\beta^{0,3}\rangle dv=-\int \langle \bar{N}\circ (F_B)^{1,1}_p,i\beta^{0,3}\rangle dv\label{eq 27}
\end{align}
We have
\begin{align*}
    &\int |(F_B)^{1,1}_p|^2 dv\\
    &=\int \Big(\frac{2}{3}|\Lambda F_B|^2+2|F_B^{0,2}|^2\Big)dv-4\pi^2\langle c_1(\mathcal L)^2\cup [\omega],[M]\rangle\qquad[\text{lemma \ref{norm} is used}]\\
    &\leq\int\Big(\frac{2}{3\times 64}(3|\phi^0|^2-|\phi^{0,2}|^2-3r)^2+\frac{1}{8}|\phi^0|^2|\phi^{0,2}|^2+4|\langle \bar{N}\circ (F_B)^{1,1}_p,i\beta^{0,3}\rangle|+C\Big)dv\\
    &\hspace{5.8 cm}\qquad[\text{equations \eqref{Curvqture 6 1}, \eqref{Curvqture 6 2}, \eqref{eq 26}, \eqref{eq 27} are used}]\\
    &\leq \int\Big(\frac{1}{96}\big((3|\phi^0|^2+|\phi^{0,2}|^2)^2-6r(3|\phi^0|^2-|\phi^{0,2}|^2)+9r^2\big)+\frac{1}{2}|(F_B)^{1,1}_p|^2+C|\beta|^2+C\Big)dv\\
    &\hspace{11 cm}\qquad[\text{Peter-Paul}]
\end{align*}
which implies
\begin{align}
    \int |(F_B)^{1,1}_p|^2 dv\leq \int \Big(\frac{1}{48}\big((3|\phi^0|^2+|\phi^{0,2}|^2)^2-6r(3|\phi^0|^2-|\phi^{0,2}|^2)+9r^2\big)+C|\beta|^2+C\Big)dv\label{eq 28}
\end{align}
Equation \eqref{eq start} now reads
\begin{align}
    0&\leq\int\Big(C(|\phi^0|^2+|\phi^{0,2}|^2)+\frac{r}{8}(3|\phi^0|^2-|\phi^{0,2}|^2)+C|\beta|^2|\phi^{0,2}|^2\nonumber\\
    &\hspace{1 cm} -\frac{1}{24}\big(9|\phi^0|^4+|\phi^{0,2}|^4+18|\phi^0|^2|\phi^{0,2}|^2\big)-\frac{1}{2}\langle\mathrm{cl}(F_A)^{1,1}_p\phi^{0,2},\phi^{0,2}\rangle\Big)dv\nonumber\\
    &\leq\int\bigg(C|\phi^0|^2+C\big(1+|\beta|^2\big)|\phi^{0,2}|^2-\frac{1}{24}\big(9|\phi^0|^4+|\phi^{0,2}|^4+18|\phi^0|^2|\phi^{0,2}|^2\big)\nonumber\\
    &\hspace{1 cm}\frac{1}{\epsilon}\Big(\frac{1}{48}\big(9|\phi^0|^4+|\phi^{0,2}|^4+6|\phi^0|^2|\phi^{0,2}|^2-6r(3|\phi^0|^2-|\phi^{0,2}|^2)+9r^2\big)+C|\beta|^2+C\Big)\nonumber\\
    &\hspace{1 cm}+\epsilon|\phi^{0,2}|^4+\frac{r}{8}(3|\phi^0|^2-|\phi^{0,2}|^2)\bigg)dv\label{eq final}
\end{align}
Put $\epsilon=1$ in the last inequality \eqref{eq final} and use Peter--Paul again to estimate the terms 
\begin{align*}
    |\phi^0|^2\leq \frac{1}{2\tilde\epsilon}+\frac{\tilde\epsilon}{2}|\phi^0|^4\\
    r|\phi^0|^2\leq \frac{r^2}{2\tilde\epsilon}+\frac{\tilde\epsilon}{2}|\phi^0|^4\\
    |\beta|^2|\phi^{0,2}|^2\leq \frac{1}{2\tilde\epsilon}|\beta|^4+\frac{\tilde\epsilon}{2}|\phi^{0,2}|^4\\
    |\phi^{0,2}|^2\leq \frac{1}{2\tilde\epsilon}+\frac{\tilde\epsilon}{2}|\phi^{0,2}|^4
\end{align*}
Now choose $\tilde\epsilon$ small enough so that a negative multiple of $|\phi^0|^4$ survives on the right hand side. Taking that negative multiple of $|\phi^0|^4$ on the left hand side we get the inequality
\begin{align*}
    \int|\phi^0|^4 dv\leq C\int (|\phi^{0,2}|^4+|\beta|^4+r^2)dv\tag*{\qedhere}
\end{align*}
\end{proof}
\begin{proof}[Proof of theorem \ref{theorem2 symplectic}]
We start with an identity on $\Omega^0(\mathcal L)$ (see \S\S 1.4.2. in \cite{Nicol}):
\begin{align*}
    2\bar\partial_B^*\bar\partial_B\phi^0 =\nabla_B^*\nabla_B\phi^0-i\Lambda(F_B)\phi^0
\end{align*}
Taking the inner product with $\phi^0$ and integrating by parts we get
\begin{align*}
    \int |\nabla_B\phi^0|^2 dv=\int \big(2\langle \bar\partial_B^*\bar\partial_B\phi^0,\phi^0\rangle+ i\Lambda (F_B)|\phi^0|^2\big)dv
\end{align*}
Now use equation \eqref{Dirac 6 01} to deduce
\begin{align*}
    \int 2\langle \bar\partial_B^*\bar\partial_B\phi^0,\phi^0\rangle dv=-\int 2\langle \bar\partial_B^*\bar\partial_B^*\phi^{0,2},\phi^0\rangle dv=-2\int \langle\phi^{0,2},\bar\partial_B^2\phi^{0}\rangle dv
\end{align*}
We have: $
    \bar\partial_B^2\phi^{0}=F_B^{0,2}\phi^0-\bar{N}\circ \partial_B\phi^0$. Therefore we deduce
\begin{align}
    \int |\nabla_B\phi^0|^2 dv&=\int \big(-2\langle \phi^{0,2},F_B^{0,2}\phi^0\rangle+2\langle\phi^{0,2}, \bar{N}\partial_B\phi^0\rangle + i\Lambda (F_B)|\phi^0|^2\big)dv\nonumber\\
    &=\int \big(-2\langle F_B^{0,2}\phi^0,\phi^{0,2}\rangle+ i\Lambda (F_B)|\phi^0|^2+2\overline{\langle\phi^{0,2}, \bar{N}\partial_B\phi^0\rangle}\big)dv\label{32}
\end{align}
Applying $\bar\partial_B^*$ on both sides of equation \eqref{Dirac 6 03} and taking inner product with $\phi^{0,2}$ reads
\begin{align*}
    \langle \bar\partial_B^*\bar\partial_B\phi^{0,2},\phi^{0,2}\rangle+2i\langle\bar\partial_B^*(\phi^0\beta^{0,3}),\phi^{0,2}\rangle=0
\end{align*}
Integration by parts leads us to
\begin{align*}
    \int \big(|\bar\partial_B\phi^{0,2}|^2+2i\langle \beta^{0,3},\overline{\phi^0}\,\bar\partial_B\phi^{0,2}\rangle\big)dv=0 
\end{align*}
Integrating by parts again we deduce that the equation above is equivalent to the following.
\begin{align}
    \int \big(|\bar\partial_B\phi^{0,2}|^2-2i\langle \beta^{0,3},\bar\partial_B(\overline{\phi^0})\,\phi^{0,2}\rangle+2i\langle (\bar\partial^*\beta^{0,3})\phi^0,\phi^{0,2}\rangle\big)dv=0\label{33}
\end{align}
The $(0,3)$-component of the Bianchi identity: $dF_B=0$ gives us $\bar\partial F_B^{0,2}+\bar{N}\circ F_B^{1,1}=0$. Therefore applying $\bar\partial$ on both sides of equation \eqref{Curvqture 6 2} we deduce
\begin{align*}
    -\bar{N}\circ F_B^{1,1}+i\bar\partial\bar\partial^*\beta^{0,3}=\frac{1}{4}\bar\partial\big(\overline{\phi^0}\phi^{0,2}\big)=\frac{1}{4}\big(\bar\partial_B(\overline{\phi^0})\phi^{0,2}+\phi^0\bar\partial_B (\phi^{0,2})\big)
\end{align*}
Taking inner product with $\beta^{0,3}$ on both sides of the equation and integrating by parts we get
\begin{align*}
    -\int\langle \bar{N}\circ F_B^{1,1},\beta^{0,3}\rangle dv+i\int|\bar\partial^*\beta^{0,3}|^2dv =\frac{1}{4}\int\big\langle \big(\bar\partial_B(\overline{\phi^0})\phi^{0,2}+\overline{\phi^0}\bar\partial_B (\phi^{0,2})\big),\beta^{0,3}\big\rangle dv
\end{align*}
Next we use equation \eqref{Dirac 6 03} to simplify the above equality into the following:
\begin{align}
    \int|\bar\partial^*\beta^{0,3}|^2=-\frac{i}{4}\int\big\langle \big(\bar\partial_B(\overline{\phi^0})\phi^{0,2},\beta^{0,3}\big\rangle dv-\frac{1}{2}\int|\phi^0|^2|\beta^{0,3}|^2 dv-i\int\langle \bar{N}\circ F_B^{1,1},\beta^{0,3}\rangle dv\label{34}
\end{align}
Combining equations \eqref{32}, \eqref{33} and \eqref{34} we get
\begin{align*}
    &\int \Big(|\nabla_B\phi^0|^2+|\bar\partial_B\phi^{0,2}|^2+8|\bar\partial ^*\beta^{0,3}|^2+4|\phi^0|^2|\beta^{0,3}|^2+\frac{1}{2}|\phi^0|^2|\phi^{0,2}|^2-i\Lambda (F_B)|\phi^0|^2\Big) dv\\
    &=\int \Big(2\langle \bar{N}\circ \partial_B\phi^0,\phi^{0,2}\rangle -8i\langle \bar{N}\circ F_B^{1,1},\beta^{0,3}\rangle +4i\langle \beta^{0,3},\bar\partial_B(\overline{\phi^0})\phi^{0,2}\rangle\Big)dv
\end{align*}
Replacing $i\Lambda(F_B)=-\frac{1}{8}(3|\phi^0|^2-|\phi^{0,2}|^2-3r)$ from equation \eqref{Curvqture 6 1} we get
\begin{align*}
    &\int \Big(|\nabla_B\phi^0|^2+|\bar\partial_B\phi^{0,2}|^2+8|\bar\partial ^*\beta^{0,3}|^2+4|\phi^0|^2|\beta^{0,3}|^2\\
    &\hspace{4 ex}+\frac{1}{2}|\phi^0|^2|\phi^{0,2}|^2+\frac{1}{8}(3|\phi^0|^2-|\phi^{0,2}|^2-3r)|\phi^0|^2\Big) dv\\
    &=\int \Big(2\langle \bar{N}\circ\partial_B\phi^0,\phi^{0,2}\rangle -8i\langle \bar{N}\circ F_B^{1,1},\beta^{0,3}\rangle +4i\langle \beta^{0,3},\bar\partial_B(\overline{\phi^0})\phi^{0,2}\rangle\Big)dv
\end{align*}
which in turn reads
\begin{align}
    &\int \Big(|\nabla_B\phi^0|^2+|\bar\partial_B\phi^{0,2}|^2+8|\bar\partial ^*\beta^{0,3}|^2+4|\phi^0|^2|\beta^{0,3}|^2\nonumber\\
    &\hspace{4 ex}+\frac{3}{8}|\phi^0|^2|\phi^{0,2}|^2+\frac{3}{8}(|\phi^0|^2-r)^2+\frac{3r}{8}(|\phi^0|^2-r)\Big) dv\nonumber\\
    &=\int \Big(2\langle \bar{N}\circ\partial_B\phi^0,\phi^{0,2}\rangle -8i\langle \bar{N}\circ F_B^{1,1},\beta^{0,3}\rangle +4i\langle \beta^{0,3},\bar\partial_B(\overline{\phi^0})\phi^{0,2}\rangle\Big)dv\label{eq 1000}
\end{align}
Since $\mathcal L$ is chosen to be the trivial line bundle we also have \begin{align*}
    0=\int \Lambda(F_B) dv=\frac{i}{8}\int (3|\phi^0|^2-|\phi^{0,2}|^2-3r)dv
\end{align*}
which implies
\begin{align*}
\frac{3r}{8}\int (|\phi^0|^2-r) dv=\frac{r}{8}\int |\phi^{0,2}|^2 dv
\end{align*}
Replacing this in \eqref{eq 1000} we get
\begin{align*}
        &\int \Big(|\nabla_B\phi^0|^2+|\bar\partial_B\phi^{0,2}|^2+8|\bar\partial ^*\beta^{0,3}|^2+4|\phi^0|^2|\beta^{0,3}|^2\nonumber\\
    &\hspace{4 ex}+\frac{3}{8}|\phi^0|^2|\phi^{0,2}|^2+\frac{3}{8}(|\phi^0|^2-r)^2+\frac{r}{8} |\phi^{0,2}|^2\Big) dv\nonumber\\
    &=\int \Big(2\langle \bar{N}\circ\partial_B\phi^0,\phi^{0,2}\rangle -8i\langle \bar{N}\circ F_B^{1,1},\beta^{0,3}\rangle +4i\langle \beta^{0,3},\bar\partial_B(\overline{\phi^0})\phi^{0,2}\rangle\Big)dv
\end{align*}
Under the assumption of $\beta=0$, we have
\begin{align*}
    &\int \Big(|\nabla_B\phi^0|^2+|\bar\partial_B\phi^{0,2}|^2+\frac{3}{8}|\phi^0|^2|\phi^{0,2}|^2+\frac{3}{8}(|\phi^0|^2-r)^2+\frac{r}{8}|\phi^{0,2}|^2\Big) dv\\&=2\int \langle \bar{N}\circ\partial_B\phi^0,\phi^{0,2}\rangle dv
\end{align*}
We estimate the right hand side using Peter-Paul inequality to obtain
\begin{align*}
    &\int \Big(|\nabla_B\phi^0|^2+|\bar\partial_B\phi^{0,2}|^2+\frac{3}{8}|\phi^0|^2|\phi^{0,2}|^2+\frac{3}{8}(|\phi^0|^2-r)^2+\frac{r}{8}|\phi^{0,2}|^2\Big) dv\\&\leq \int \Big( \frac{1}{2}|\nabla_B\phi^0|^2+C|\phi^{0,2}|^2\Big) dv
\end{align*}
which in turn gives us
\begin{align*}
    &\int \Big(\frac{1}{2}|\nabla_B\phi^0|^2+|\bar\partial_B\phi^{0,2}|^2+\frac{3}{8}|\phi^0|^2|\phi^{0,2}|^2+\frac{3}{8}(|\phi^0|^2-r)^2+\big(\frac{r}{8}-C\big)|\phi^{0,2}|^2\Big) dv\leq 0
\end{align*}
Therefore for $r>8C$, we have
\begin{align*}
    \nabla_B\phi^0=0,\, \phi^{0,2}=0,\,|\phi^0|=\sqrt{r}
\end{align*}
Hence for $(M,g,J)$ an almost K\"ahler $3$-fold, the canonical spin$^\mathbb C$-structure gives us a unique solution modulo gauge under the assumption $\beta=0$: $\phi=\sqrt{r}, B=$ the trivial connection on the trivial line bundle and $\beta=0$.
\end{proof}
\begin{comment}
\begin{proof}[Proof of theorem \ref{transversal}]
For the canonical spin bundles, the equations \eqref{Dirac}, \eqref{Curvature} are elliptic modulo gauge with index zero. Therefore to prove that the canonical solution is nondegenerate, all we have to show is that the kernel of the linearised equations at the canonical solution trivial.\par Suppose $\delta(A^{\mathrm{Ch}},0,\sqrt{r}\psi_0)=(2ia,b,\sigma)$ be an infinitesimal perturbation of the canonical solution where $a\in\Omega^1,b=b^3+b^5+\cdots+b^{\big(m-\frac{1+(-1)^m}{2}\big)}\in\oplus_{k>1}$Re$(\Omega^{0,2k+1}),\sigma\in \Gamma(S^{\mathrm{can}}_+)$. $(2ia,b,\sigma)$ are in the kernel of the linearised equations if they satisfy the following equations.
\begin{align}
    D_{A^{\mathrm{Ch}}}\sigma+\sqrt{r}\,\mathrm{cl}(ia+b)\psi_0=0\label{Lin Dirac}\\
    \Pi\big(2ida+F_b\big)= \sqrt{r}\,\Pi\Big(\psi_0^*\otimes \sigma+\sigma^*\otimes \psi_0-\frac{\langle\psi_0,\sigma\rangle+\langle\sigma,\psi_0\rangle}{2^{m-1}}\Big)\label{Lin Curvature}
\end{align}
Write 
\begin{align*}
    a&=\frac{1}{\sqrt{2}}(b_1+\overline{b_1}),\, b^k=\frac{1}{(\sqrt{2})^k}(b_k+\overline{b_k})
\end{align*}
where $b_k\in\Omega^{0,k}$.
\end{proof}
\end{comment}
\subsection{An energy identity}
In this section we prove theorem \ref{Energy}. Before we go into the proof, we prove a small lemma.
\begin{lemma}\label{lemma Cl}
On a $6$-dimensional spin$^\mathbb C$-manifold, let $\theta\in i\Omega^2$, then for any $\phi\in\Gamma(S_+)$,
\begin{align*}
    \langle\mathrm{cl}(\theta)\phi,\phi\rangle=4\langle\theta, q(\phi)\rangle
\end{align*}
\end{lemma}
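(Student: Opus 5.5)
The plan is to reduce the identity to a pairing of endomorphisms and then fix the constant by evaluating on a single simple two form. Recall that $q(\phi)\in i\Omega^2$ is defined by $\mathrm{cl}(q(\phi))=E_\phi=\phi\otimes\phi^*-\frac14|\phi|^2\mathrm{Id}$ on $S_+$ (here $m=3$, so $2^{m-1}=4$). The first step is to rewrite the left-hand side as a trace:
\begin{align*}
\langle\mathrm{cl}(\theta)\phi,\phi\rangle=\mathrm{tr}\big(\mathrm{cl}(\theta)\circ(\phi\otimes\phi^*)\big)=\langle \phi\otimes\phi^*,\mathrm{cl}(\theta)\rangle_{\mathrm{End}(S_+)} .
\end{align*}
For $\theta\in i\Omega^2$, the endomorphism $\mathrm{cl}(\theta)$ is Hermitian, so this pairing is real. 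It is also trace free, since two forms act trace-freely, as used in the identification \eqref{Clifford identity 1}. Hence the trace part $-\frac14|\phi|^2\mathrm{Id}$ contributes nothing. The left-hand side therefore equals $\langle E_\phi,\mathrm{cl}(\theta)\rangle=\langle\mathrm{cl}(q(\phi)),\mathrm{cl}(\theta)\rangle$, exactly as in the computation of \eqref{eq:dE-pairing} and of $c_m$ in the proof of theorem \ref{vortex}.

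The second step is to show that Clifford multiplication is a homothety on $i\Lambda^2$, with
\begin{align*}
\langle\mathrm{cl}(\theta_1),\mathrm{cl}(\theta_2)\rangle=\kappa\langle\theta_1,\theta_2\rangle,\qquad \kappa=4.
\end{align*}
Proportionality follows because $\mathrm{cl}:i\Lambda^2\to i\mathfrak{su}(S_+)$ is an $\mathrm{Spin}(6)$-equivariant isomorphism onto an irreducible representation (the adjoint of $SU(4)$). Schur's lemma then forces the pulled-back metric to be a multiple of the standard one. Alternatively, one can check directly that distinct basis elements $i e^a\wedge e^b$ give orthogonal Clifford products.

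To find the constant, I would test on a unit simple form $\theta=ie^1\wedge e^2$. For this form, $\mathrm{cl}(\theta)^2=\mathrm{Id}$ because $(e^1e^2)^2=-1$. So $|\mathrm{cl}(\theta)|^2=\mathrm{tr}\,\mathrm{Id}_{S_+}=2^{m-1}=4$, which is the same computation as in the proof of theorem \ref{vortex} with $m=3$. Combining the two steps gives $\langle\mathrm{cl}(\theta)\phi,\phi\rangle=4\langle q(\phi),\theta\rangle$. This equals $4\langle\theta,q(\phi)\rangle$ since both forms are imaginary and the pairing is real.

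The main obstacle is consistency of conventions rather than any substantive difficulty. I need the inner product on $\mathrm{End}(S_+)$ to be $\mathrm{tr}(AB^*)$, and the Hermitian inner product on complex forms to be normalised so that $|ie^1\wedge e^2|=1$. These must match the normalisations used earlier in the paper; otherwise the factor $4$ would come out differently. Note that the proof of the proposition above uses ``$|\mathrm{cl}(\theta)|^2=4|\theta|^2$'' with the same normalisation, which is reassuring. I would also double-check that the trace-freeness of $\mathrm{cl}(\theta)$ holds on $S_+$ alone and not just on $S$; it does, since $\mathrm{tr}(e^1e^2)=\mathrm{tr}(e^2e^1)=-\mathrm{tr}(e^1e^2)$ when restricted to $S_+$, using that $e^1e^2$ preserves $S_+$.
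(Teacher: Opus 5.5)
Your proposal is correct and follows the paper's proof essentially step for step. You rewrite $\langle\mathrm{cl}(\theta)\phi,\phi\rangle$ as a trace pairing, use trace-freeness of $\mathrm{cl}(\theta)$ to replace $\phi\otimes\phi^*$ by $E_\phi=\mathrm{cl}(q(\phi))$, and apply Schur's lemma to $\mathrm{cl}:i\Lambda^2\to i\mathfrak{su}(S_+)$, with the constant $4$ fixed by $\theta=ie^1\wedge e^2$ as in the proof of theorem \ref{vortex}.

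The only slip is in your side check of trace-freeness. Cyclicity gives $\mathrm{tr}_{S_+}(e^1e^2)=\mathrm{tr}_{S_-}(e^2e^1)$, not a trace over $S_+$ again; your argument would wrongly make $\mathrm{cl}(e^1e^2)$ trace-free on $S_+$ in dimension $2$. A correct justification is to conjugate by $e^1e^3$, which preserves $S_+$ and anticommutes with $e^1e^2$.
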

\begin{proof}
    In dimension $6$, Clifford multiplication gives us an isomorphism 
    \begin{align*}
        \mathrm{cl}:i\Lambda^2\rightarrow i\mathfrak{su}(S_+)
    \end{align*}
    which corresponds to isomorphism between two irreducible Spin$(6)$ representations.  It follows from Schur's lemma that there is a  constant $a$ such that for all $\theta\in i\Omega^2,\phi\in\Gamma(S_+),$
    \begin{align*}
        |\mathrm{cl}(\theta)|^2=a|\theta|^2
    \end{align*}
    As shown in the proof of theorem \ref{vortex}, $a=4$. We now compute
\begin{align*}
    4\langle \theta, q(\phi)\rangle&=\big\langle \mathrm{cl}(\theta), E_\phi\big\rangle\\
    &=\mathrm{Tr}\big( \mathrm{cl}(\theta)\circ\phi^{*}\otimes \phi\big)\\
    &=\sum_i\big\langle \mathrm{cl}(\theta) \langle e_i,\phi\rangle\phi, e_i\big\rangle\\
    &=\big\langle \mathrm{cl}(\theta)\phi,\phi\big\rangle 
\end{align*}
In the first line, we use the fact that $a=4$, so that $\mathrm{cl}$ scales the inner product by $4$.  In the second line we use the fact that $\mathrm{cl}(\theta)$ is trace-free and so we can replace $E_\phi$ by $\phi^{*}\otimes \phi$; recall \eqref{Ephi} for $E_\phi$.
\end{proof}
\begin{proof}[Proof of theorem \ref{Energy}]
The Weitzenb\"ock formula \eqref{Weitzenbock}, together with \eqref{Weitzenbock 6} give us
\begin{align*}
    &\int |D_{A,\beta}\phi|^2 dv\\
    &=\int \Big(|\nabla_{A,\beta}\phi|^2+\frac{s}{4}|\phi|^2+\frac{1}{2}\big\langle\mathrm{cl}(F_A+2id^*\beta)\phi,\phi\big\rangle-2|\beta|^2|\phi|^2\Big) dv\\
    &=\int \Big(|\nabla_{A,\beta}\phi|^2+\frac{s}{4}|\phi|^2+\big\langle\mathrm{cl}(F_B+id^*\beta)\phi,\phi\big\rangle+\frac{1}{2}\big\langle \mathrm{cl}(F_{A^{\mathrm{Ch}}})\phi,\phi\big\rangle-2|\beta|^2|\phi|^2\Big) dv
\end{align*}
By lemma \ref{lemma Cl},
\begin{align*}
    \big\langle\mathrm{cl}(F_B+id^*\beta)\phi,\phi\big\rangle=4\big\langle F_B+id^*\beta, q(\phi)\big\rangle
\end{align*}
Thereafter we have
\begin{align*}
    &\int \Big(|D_{A,\beta}\phi|^2+4|\Pi\big(F_B+id^*\beta+\frac{ir}{8}\omega-\frac{1}{2}q(\phi)\big)|^2\Big) dv\\
    &=\int \Big(|D_{A,\beta}\phi|^2+ 4|\Pi (F_B+id^*\beta+\frac{ir}{8}\omega)|^2+|\Pi(q(\phi))|^2-4\big\langle\Pi(F_B+id^*\beta+\frac{ir}{8}\omega), q(\phi)\big\rangle\Big) dv\\
    &=\int \Big(|\nabla_{A,\beta}\phi|^2+\frac{s}{4}|\phi|^2+4|\Pi(F_B+id^*\beta+\frac{ir}{8}\omega)|^2+\big\langle 4(F_B)^{1,1}_p+2F_{A^\mathrm{Ch}},q(\phi)\big\rangle\\
    &\hspace{5 ex} -\frac{1}{8} \big\langle \mathrm{cl}(ir\omega)\phi,\phi\big\rangle +|\Pi(q(\phi))|^2-2|\beta|^2|\phi|^2\Big) dv\qquad[\text{lemma \ref{lemma Cl} is used}]
\end{align*}
Lemma \ref{norm} gives us
\begin{align*}
    \int |(F_B)_p^{1,1}|^2 dv=-4\pi^2\langle c_1(\mathcal L)^2\cup [\omega],[M]\rangle+\int \frac{2}{3} \Big(|\Lambda(F_B)|^2+|F_B^{2,0}|^2+|F_B^{0,2}|^2\Big) dv
\end{align*}
Therefore,
\begin{align*}
    \int|F_B|^2 dv&=\int \Big(\frac{1}{3}|\Lambda (F_B)|^2+|F_B^{2,0}|^2+|F_B^{0,2}|^2+|(F_B)_p^{1,1}|^2\Big) dv\\
&=-4\pi^2\langle c_1(\mathcal L)^2\cup [\omega],[M]\rangle+\int \Big(|\Lambda(F_B)|^2+2|F_B^{2,0}|^2+2|F_B^{0,2}|^2\Big) dv\\
&=-4\pi^2\langle c_1(\mathcal L)^2\cup [\omega],[M]\rangle+\frac{1}{3}\int |\Lambda(F_B)|^2 dv+2\int |\Pi(F_B)|^2 dv
\end{align*}
Also since $\beta\in$ Re$\Omega^{0,3},\Pi(id^*\beta)=id^*\beta$. Therefore,
\begin{align*}
    \langle \Pi(F_B),\Pi(id^*\beta)\rangle_{L^2}&=\langle F_B,\Pi(id^*\beta)\rangle_{L^2}\\
    &=\langle F_B,id^*\beta\rangle_{L^2}\\
    &=\langle dF_B,i\beta\rangle_{L^2}\\
    &=0
\end{align*}
Similarly, 
\begin{align*}
    \langle id^*\beta, i\omega\rangle_{L^2}=0\hspace{1 ex}\mathrm{since  }\hspace{1 ex}d\omega=0
\end{align*}
and 
\begin{align*}
    8\langle \Pi(F_B),\frac{ir}{8}\omega\rangle_{L^2}=-2\pi r\int \frac{i}{2\pi}F_B\wedge \frac{\omega^2}{2!}=-2\pi r\mathrm{deg}(\mathcal L)
\end{align*}
We also compute the terms
\begin{align*}
    \int |\Pi(q(\phi))|^2 dv &=\int \langle \Pi(q(\phi)),q(\phi)\rangle dv\\
    &=\frac{1}{4}\int \langle \mathrm{cl}\big(\Pi(q(\phi))\big)\phi,\phi\rangle dv\\
    &=\frac{1}{48}\int(9|\phi^0|^4+|\phi^{0,2}|^4+18|\phi^0|^2|\phi^{0,2}|^2) dv\\
    &=\int\Big(\frac{1}{48}(3|\phi^0|^2-|\phi^{0,2}|^2)^2+\frac{1}{2}|\phi^0|^2|\phi^{0,2}|^2\Big)dv
\end{align*}
and 
\begin{align*}
    -\frac{1}{8}\langle \mathrm{cl}(ir\omega)\phi,\phi\rangle&=-\frac{r}{8}\langle 3\phi^0-\phi^{0,2},\phi^0+\phi^{0,2}\rangle\\
    &=-\frac{r}{8}(3|\phi^0|^2-|\phi^{0,2}|^2)
\end{align*}
Finally assembling all the pieces together we get
\begin{align*}
    &\int \Big(|D_{A,\beta}\phi|^2+4|\Pi\big(F_B+id^*\beta+\frac{ir}{8}\omega-\frac{1}{2}q(\phi)\big)|^2\Big) dv\\
    &=\int\Big(|\nabla_{A,\beta}\phi|^2+\frac{s}{4}|\phi|^2+2|F_B|^2+4|d^{*}\beta|^2+\frac{3r^2}{16}+\frac{1}{48}(3|\phi^0|^2-|\phi^{0,2}|^2)^2+\frac{1}{2}|\phi^0|^2|\phi^{0,2}|^2\\
    &\hspace{5 ex}-\frac{2}{3}|\Lambda(F_B)|^2-2|\beta|^2|\phi|^2-\frac{r}{8}(3|\phi^0|^2-|\phi^{0,2}|^2)+\big\langle 4(F_B)^{1,1}_p+2F_{A^\mathrm{Ch}},q(\phi)\big\rangle\Big)dv\\
    &\hspace{5 ex}-2\pi r\mathrm{deg}(\mathcal L)+8\pi^2\langle c_1(\mathcal L)^2\cup [\omega],[M]\rangle\\
    &=\mathcal E(A,\beta,\phi)+8\pi^2\langle c_1(\mathcal L)^2\cup [\omega],[M]\rangle
\end{align*}
The left hand side is non-negative and equal to zero if and only if $(A,
\beta,\phi)$ solve the equations \eqref{Dirac}, \eqref{Curvature} in dimension $n=6$.
\end{proof}
\printbibliography[
heading=bibintoc,
title={Bibliography}
]

%\begin{thebibliography}{9}% Replace 9 by 99 if 10 or more references
%
% Please note the use of "\and" between author names below
%
%bibitem{Goddard}
%{\bibname P. Goddard, A. Kent \and D. I. Olive}, `Unitary
%representations of the Virasoro and Supervirasoro algebras', {\em
%Comm. Math. Phys. }103 (1986) 105.
%

%\end{thebibliography}

%\affiliationone{% in this example, two authors share an institution
   %Partha Ghosh\\
    %IMJ-PRG}
   %\email{Partha.Ghosh@imj-prg.fr}
% Important: Do not put any empty line here.
%\affiliationtwo{% in this example, one author has two addresses}
   %T. Hird\\
   %Previous postal address where
    % the research was performed and\\
   %Country
   %\email{hird@university.ac.uk}}
% Important: Do not put any empty line here.
% Use \affiliationthree{} for any address positioned under \affiliationone
% Use \affiliationfour{}  for any address positioned under \affiliationtwo
%\affiliationthree{~} %inserts a space to make this field empty
%\affiliationfour{%
   %Current address:\\
   %Present long-term address\\
   %Country
   %\email{t.hird@institution.edu}}
%

%\begin{keywords}
%Sections; lists; figures; tables; mathematics; fonts; references; appendices
%\end{keywords}

\Addresses
\end{document}